    \OR\ifentrytype{incollection}\OR\ifentrytype{inproceedings}%
    \OR\ifentrytype{inreference}} {\printtext[title]{%
\DeclareFontFamily{U}{BOONDOX-calo}{\skewchar\font=45 }
\DeclareFontShape{U}{BOONDOX-calo}{m}{n}{
  <-> s*[1.05] BOONDOX-r-calo}{}
\DeclareFontShape{U}{BOONDOX-calo}{b}{n}{
  <-> s*[1.05] BOONDOX-b-calo}{}
\DeclareMathAlphabet{\mcb}{U}{BOONDOX-calo}{m}{n}
\SetMathAlphabet{\mcb}{bold}{U}{BOONDOX-calo}{b}{n}
\newcommand{\sbullet}{%
  \hbox{\fontfamily{lmr}\fontsize{.6\dimexpr(\f@size pt)}{0}\selectfont\textbullet}}
\newcommand*\bigcdot{\mathpalette\bigcdot@{.5}}
\newcommand*\bigcdot@[2]{\mathbin{\vcenter{\hbox{\scalebox{#2}{$\m@th#1\bullet$}}}}}
\definecolor{darkblue}{rgb}{0.13,0.13,0.39}%
\newtheorem{thm}{Theorem}[section] 
\newtheorem{lem}[thm]{Lemma}
\newtheorem{prop}[thm]{Proposition}
\newtheorem{claim}[thm]{Claim}
\theoremstyle{definition} 
\newtheorem{rem}[thm]{Remark} 
\newtheorem{defn}[thm]{Definition}
\newcounter{algoc}
\newtheorem{algo}[algoc]{Algorithm}
\renewcommand{\d}{d}%
\newcommand{\B}{B}
\newcommand{\cL}{\mathcal{L}}
\newcommand{\caja}[1]{\left [ \tts #1\right ]\mathchoice{\!}{\tsm}{}{}\ttsm}
\newcommand{\cajab}[1]{\big[ \tts #1\big ]\ttsm}
\newcommand{\X}{\mathsf{X}}
\newcommand{\Z}{\mathsf{Z}}
\newcommand{\Y}{\mathsf{Y}}
\newcommand{\wcX}{\widecheck{\X}}
\newcommand{\wcY}{\widecheck{\Y}}
\newcommand{\I}{{\rm i}} 
\newcommand{\pp}{\mathbb{P}}
\newcommand{\ee}{\mathbb{E}} 
\newcommand{\rr}{\mathbb{R}}
\newcommand{\nn}{\mathbb{N}} 
\newcommand{\zz}{\mathbb{Z}}
\newcommand{\p}{\partial}
\newcommand{\uno}[1]{\mathbf{1}_{#1}}
\newcommand{\ep}{\varepsilon}
\newcommand{\eps}{\varepsilon}
\newcommand{\vs}{\vspace{6pt}}
\newcommand{\wt}{\widetilde}
\newcommand{\qqand}{\qquad\text{and}\qquad}
\newcommand{\ts}{\hspace{0.1em}}
\newcommand{\tts}{\hspace{0.05em}}
\newcommand{\tsm}{\hspace{-0.1em}}
\newcommand{\ttsm}{\hspace{-0.05em}}
\newcommand\RedeclareMathOperator{%
  \@ifstar{\def\rmo@s{m}\rmo@redeclare}{\def\rmo@s{o}\rmo@redeclare}%
}
\newcommand\rmo@redeclare[2]{%
  \begingroup \escapechar\m@ne\xdef\@gtempa{{\string#1}}\endgroup
  \expandafter\@ifundefined\@gtempa
     {\@latex@error{\noexpand#1undefined}\@ehc}%
     \relax
  \expandafter\rmo@declmathop\rmo@s{#1}{#2}}
\newcommand\rmo@declmathop[3]{%
  \DeclareRobustCommand{#2}{\qopname\newmcodes@#1{#3}}%
}
\newcommand{\uptext}[1]{\text{\upshape{#1}}}
\RedeclareMathOperator{\det}{\mathop{\uptext{det}}}
\RedeclareMathOperator{\ker}{\mathop{\uptext{ker}}}
\RedeclareMathOperator{\exp}{\mathop{\uptext{exp}}}
\RedeclareMathOperator{\log}{\mathop{\uptext{log}}}
\RedeclareMathOperator*{\lim}{\mathop{\uptext{lim}}}
\RedeclareMathOperator*{\sup}{\mathop{\uptext{sup}}}
\RedeclareMathOperator*{\limsup}{\mathop{\uptext{lim\hspace{1pt}sup}}}
\RedeclareMathOperator*{\liminf}{\mathop{\uptext{lim\hspace{1pt}inf}}}
\RedeclareMathOperator*{\max}{\mathop{\uptext{max}}}
\RedeclareMathOperator*{\inf}{\mathop{\uptext{inf}}}
\RedeclareMathOperator*{\min}{\mathop{\uptext{min}}}
\RedeclareMathOperator*{\cos}{\mathop{\uptext{cos}}}
\RedeclareMathOperator*{\sin}{\mathop{\uptext{sin}}}
\RedeclareMathOperator*{\arcsin}{\mathop{\uptext{arcsin}}}
\RedeclareMathOperator*{\arg}{\mathop{\uptext{arg}}}
\RedeclareMathOperator*{\mod}{\mathop{~~~\uptext{mod}}~~}
\RedeclareMathOperator{\Re}{\mathop{\uptext{Re}}}
\RedeclareMathOperator{\Im}{\mathop{\uptext{Im}}}
\newcommand{\twopii}[1]{\ifthenelse{#1=1}{2\pi\I}{(2\pi\I)^{#1}}}
\DeclareFontFamily{U}{mathx}{}
\DeclareFontShape{U}{mathx}{m}{n}{<-> mathx10}{}
\DeclareSymbolFont{mathx}{U}{mathx}{m}{n}
\DeclareMathAccent{\widehat}{0}{mathx}{"70}
\DeclareMathAccent{\widecheck}{0}{mathx}{"71}
\def\dash---{\kern.16667em---\penalty\exhyphenpenalty\hskip.16667em\relax}
\numberwithin{equation}{section}
\let\oldmarginpar\marginpar
\renewcommand\marginpar[1]{\-\oldmarginpar[\raggedleft\footnotesize #1]%
  {\raggedright{\small\textsf{#1}}}}
\begin{document}
\title{Myopic non-intersection in a periodic potential}
\author{Jonas Arista}
\address[J.~Arista]{
  Independent researcher} \email{jonas.arr@gmail.com}
\author{Daniel Remenik}
\address[D.~Remenik]{
  Departamento de Ingenier\'ia Matem\'atica and Centro de Modelamiento Matem\'atico (IRL-CNRS 2807)\\
  Universidad de Chile\\
  Av. Beauchef 851, Torre Norte, Piso 5\\
  Santiago\\
  Chile} \email{dremenik@dim.uchile.cl}
\author{Avelio Sepúlveda}
 \address[A.~Sepúlveda]{
  Departamento de Ingenier\'ia Matem\'atica and Centro de Modelamiento Matem\'atico (IRL-CNRS 2807)\\
  Universidad de Chile\\
  Av. Beauchef 851, Torre Norte, Piso 5\\
  Santiago\\
  Chile} \email{lsepulveda@dim.uchile.cl}
\date{June 2025}

\begin{abstract}
    We introduce a class of Markov processes conditioned to avoid intersection over a moving time window of length $T > 0$, a setting we refer to as \emph{myopic non-intersection}. In particular, we study a system of myopic non-intersecting Brownian motions subject to a periodic potential. Our focus lies in understanding the interplay between the confining effect of the potential and the repulsion induced by the non-intersection constraint. We show that, in the long time limit, and as both $T$ and the strength of the potential become large, the model converges to a system of myopic non-intersecting random walks, which transitions between standard non-intersection dynamics and exclusion behavior. The main technical contribution of the paper is the introduction of an algorithm, based on a modification of the acceptance-rejection sampling scheme, that provides an explicit construction of myopically constrained systems.
\end{abstract}

\maketitle

\section{Introduction and main results}

\subsection{Background and motivation}

Systems of non-intersecting random paths in one dimension have been studied intensively for more than two decades.
The canonical example is the model of \emph{non-intersecting Brownian motions}, which corresponds to a collection of $N$ Brownian motions conditioned (suitably, in Doob's sense) on the event that they never intersect.
Dyson observed in \cite{dyson} that this process has the same distribution as the evolution of the eigenvalues of a matrix evolving as a Brownian motion on the space of $N\times N$ Hermitian matrices with a suitably chosen initial condition, a process known as Dyson Brownian motion, and which plays a key role in random matrix theory.

Classical systems of non-intersecting paths such as Dyson Brownian motion are \emph{determinantal}---their correlation functions take a specific form in terms of determinants---which has allowed the derivation of explicit formulas for many quantities of interest.
This fact is intimately connected to the classical Karlin--McGregor formula, which expresses the transition probabilities of $N$ independent Brownian motions killed upon intersection as a simple determinant, and to the characterization of many non-intersecting systems as the Doob $h$-transform of the independent system with $h$ given as a Vandermonde determinant. We refer the reader to \cite{grabiner,konigOConnellRoch,eichelsbacherKonig} for more details on these connections.

Beyond their intrinsic interest, non-intersecting paths are also a powerful tool in integrable probability, particularly in the study of models in the KPZ universality class. Many such models can be recast in terms of particular (often discrete) systems of non-intersecting paths, which can then be studied using techniques from the theory of determinantal processes. For instance, the boundary of the frozen region in domino tilings of the Aztec diamond can be mapped to the top path of a system of non-intersecting random walks which encodes the tiling.
This mapping has been used to show that the rescaled boundary converges to the Airy$_2$ process \cite{johanssonArctic}.
Another example is the totally asymmetric simple exclusion process (TASEP), which for the special case of step initial condition can be coupled with another system of non-intersecting random walks, see \cite{borodinFerrariTilings} (and also \cite{warren} for a related construction in the case of non-intersecting Brownian motions).
This coupling can be used to give a relatively simple proof that, after proper rescaling, the one-point distribution of the process converges to a Tracy--Widom GUE random variable \cite{tracyWidomGUE}.
Furthermore for TASEP with general initial conditions, the explicit solution derived in \cite{fixedpt} has also been recast, more recently in \cite{bisiLiaoSaenzZygouras}, in terms of systems of non-intersecting paths.

In this paper, we explore a variation of the standard setting for non-intersecting Brownian motions from two different angles. First, we subject the system to a periodic potential, with the goal of understanding  how its confining effect interacts with the intrinsic repulsion arising from the non-intersection constraint. However, as we will see, it turns out to be more interesting to do this under a second variation: instead of conditioning on the paths never intersecting, we consider systems with \emph{myopic non-intersection}, where paths are dynamically conditioned to not intersect over a moving time window of length $T>0$.
The parameter $T$ has the effect of modulating the strength of the repulsion, and this effect will allow us to observe interesting behavior in the long time limit as both $T$ and the strength of the potential become large.
To place the system in a setting closer to KPZ models such as TASEP, we introduce asymmetry by adding a drift to the potential.

The original motivation for this work lies in studying the interplay between repulsion and confinement in the model, and this remains our primary focus. However, the study of myopic non-intersection is interesting in its own right and, to the best of our knowledge, has not been undertaken in the literature. 
While we leave a more detailed exploration of it for future work, we lay some groundwork by establishing basic properties of these processes and by presenting two different constructions of such models: one via a limiting procedure, and another through an explicit algorithmic construction.
We stress that, due to the myopic nature of the non-intersection in our models, these constructions necessarily have to go beyond the standard determinantal or Doob $h$-transform methods.

\subsection{The model}

\subsubsection{Non-intersecting Brownian motions in a periodic potential}\label{s.mbm-intro}

Consider a periodic function $u\in C^\infty(\rr)$, with period equal to $1$, which we think of as being prescribed on $[-\frac12,\frac12)$.
This function $u$ represents the basic periodic potential underlying our model.
However, since we are interested in Brownian motions with drift component, it is more convenient to absorb that drift into the potential itself. To that end, for a fixed parameter $b>0$, we define a modified potential
\begin{align*}
v(x) = u(x)-bx.
\end{align*}
We assume, in addition, that the resulting potential $v$ has\footnote{%
To fix ideas one can think of the choice $u(x)=\sin(\pi x)^2=\frac12(1-\cos(2\pi x))$. 
However, while this function has the prescribed local minima and maxima at integers and half-integers, respectively, this is not the case for $v$ as the drift term changes the critical points. 
We could account for this in our results without any essential difficulty, but in order to keep the notation simpler we make the assumption that $v$ is adjusted so that the critical points are located as specified. 
For the trigonometric choice of $u$ which we suggested, the definition could be adjusted, for small $b$, to
\[v(x)=\frac{2+b}{2}\sin(\pi x)^2+\frac{b}{4\pi}\sin(4\pi x)-bx.\]}
a unique local minimum at zero in $[-\frac12,\frac12]$ with $v(0)=0$ and a unique local maximum at $\frac 12$ in $[0,1]$ with $v(\frac 1 2)=1$. 

Fix $\kappa>0$, and define $\X$ as the solution of the SDE
\begin{align}\label{e.SDE}
\d\X(t)= -\kappa v'(\X(t)) \d t  + \d\B_t,
\end{align}
where $B$ is a (one-dimensional) Brownian motion. The model we study is built out of this diffusion. The parameter $\kappa>0$ parametrizes the height difference between the consecutive local maxima and minima of $v$ and should be thought of as being large.

\begin{figure}
    \centering
    \includegraphics[width=0.5\linewidth]{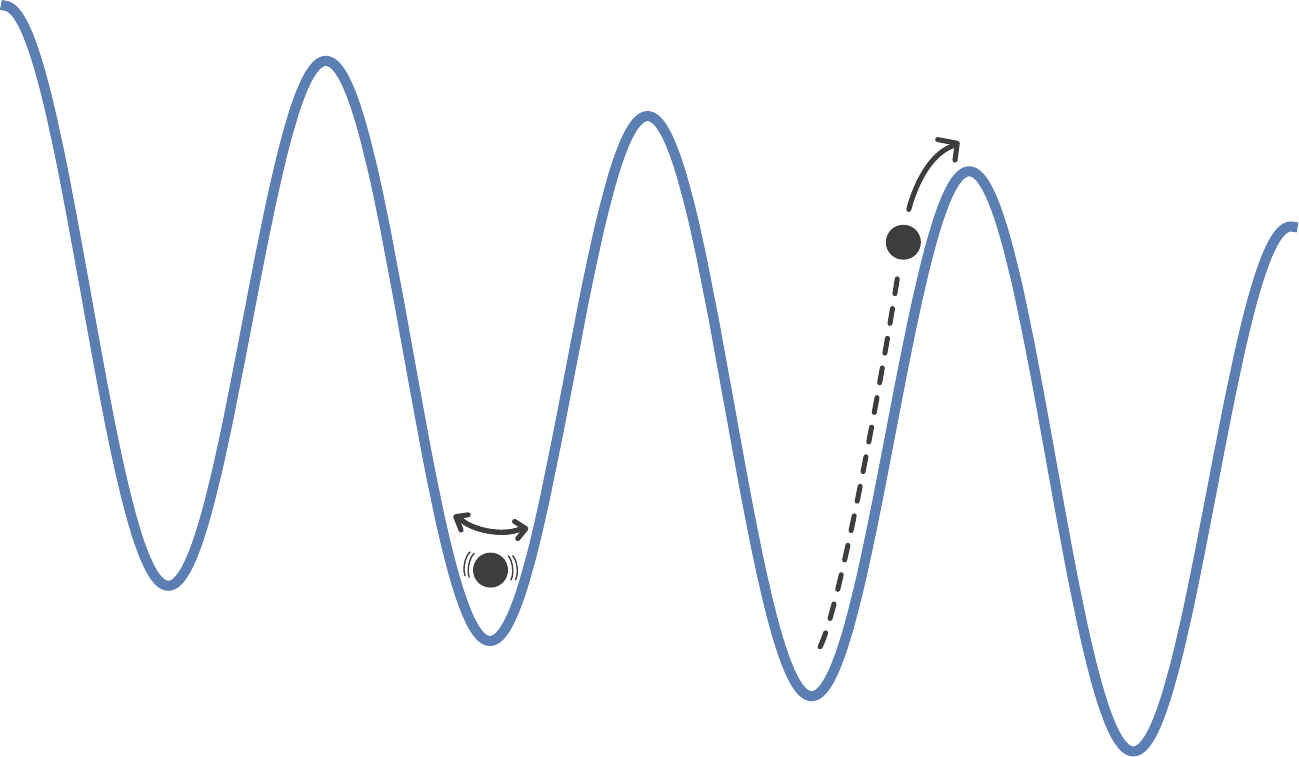}
    \caption{A Brownian particle in a periodic potential with drift. The particle tends to be stuck near a minimum of the potential for long periods of time. Occasionally the particle overcomes the potential and jumps to a neighboring box which, due to the drift, is overwhelmingly more likely to be the one to its right.}
    \label{f.periodic-potential}
\end{figure}

For large $\kappa$, the above choice of potential splits the real line into countably many boxes of length $1$ centered at the integers, so that a single Brownian particle subjected to it tends to spend most of its time stuck inside a box (near the corresponding minimum of $v$), with transitions to a neighboring box which are relatively rare.
Moreover, the additional drift term makes it much more likely that the particle jumps to the right rather than to the left.
See Figure \ref{f.periodic-potential}.
This intuition can be made precise in this setting through standard metastability results for diffusions with small noise.
Let $\caja{\X}(t)$ denote the last integer that $\X$ passed before time $t$ or, more explicitly,
\begin{equation}
  \caja{\X}(0)=\lfloor \X(0) + 1/2\rfloor\qqand \caja{\X}(t)=\X_{\tau(t)}\quad\uptext{for}\quad t>0,\label{e.caja}
\end{equation}
where
\[\tau(t)=\sup\big\{s\leq t: \X_s \in \zz\}\ \text{ and } \ \X(-\infty)= \caja{\X}(0).\]
Then there exist constants $\lambda^\kappa=e^{2\kappa+o(\kappa)}$ so that
\begin{equation}\label{e.FW}
    \caja{\X}(\lambda^\kappa t)\xrightarrow[\kappa\to\infty]{}\Y(t)
\end{equation}
in distribution, where $\Y(t)$ is a totally asymmetric simple random walk on the integers with jump rate $1$ (or in other words, a Poisson process with rate $1$).
We prove this in Proposition \ref{p.FW}, by alluding to classical results on metastability for Brownian motion in a double well potential, which we present in Section \ref{sec:F-W}.

In order to introduce the main motivation for this work, consider now a system of $N$ copies $(\X_1(t),\dotsc,\X_N(t))$ of the diffusion \eqref{e.SDE} which we think of, informally, as being conditioned on never intersecting\footnote{
Constructing such a system is, of course, delicate, because the conditioning is singular. As far as we know, there is no available construction in the literature for general potentials. But in this paper we will actually not work directly with this system, so we keep the discussion at an informal level.}.
We aim at understanding the interplay, as $\kappa$ gets large, between two competing effects in this model: the overall non-intersection condition, which boils down to a long-range repulsion interaction; and a strong (periodic) confining potential, which weakens that interaction as independent particles that are not in the same box are, for a long time, unlikely to intersect.

Specifically, we are interested in the behavior of the conditioned process as $\kappa \to \infty$ under the time scaling introduced above.
In view of \eqref{e.FW} it is reasonable to expect that for the sped up system one has
\begin{equation}
    (\caja{\X_1}(\lambda^\kappa t),\dotsc,\caja{\X_N}(\lambda^\kappa t))\xrightarrow[\kappa\to\infty]{}(\overline\Y_1(t),\dotsc,\overline\Y_N(t))
\end{equation}
in distribution for some limiting process $(\overline\Y_1(t),\dotsc,\overline\Y_N(t))$ taking values in $\zz^N$. 
However (and perhaps not too surprisingly), it turns out that the effect of conditioning on particles \emph{never} intersecting is too strong, as it affects all time scales simultaneously, making the second effect disappear.
In fact, two independent particles occupying different boxes will tend to stay separated for a long time if $\kappa$ is large, but after conditioning on the event that the particles never intersect, they still strongly influence each other as they are likely to hit at scale $\lambda^\kappa$. This suggests that, as $\kappa\to\infty$, they will behave exactly as a Poisson process conditioned on never intersecting. We do not prove this directly in the paper, as it would divert us from our main focus, but we do provide strong evidence for it; see Remark \ref{r.non-myopic}.

The above suggests that a more interesting behavior may be observed if we weaken the non-intersection condition.

\subsubsection{Myopic non-intersection}\label{s.myopic-intro}

We introduce now the main object of study of this paper, corresponding to a system $\X^{(T)}=(\X^{(T)}_{1},\dotsc,\X^{(T)}_N)$ of $N$ copies of the diffusion \eqref{e.SDE} whose evolution is infinitesimally conditioned at every time to not intersect over the next $T$ units of time, where $T$ is a positive parameter.

One way to think of such a process is as follows.
Let $\wcX^{(T)}=(\wcX^{(T)}_{1},\dotsc,\wcX^{(T)}_N)$ be a system of $N$ copies of the diffusion \eqref{e.SDE} which are  conditioned on the event that they do not intersect up to time $T$ (and are independent after time $T$).
This defines a time-inhomogeneous Markov process; let $\mathcal L_t$ denote its infinitesimal generator at time $t$.
Then the process $\X^{(T)}=(\X^{(T)}_{1},\dotsc,\X^{(T)}_N)$ can be thought of as the time-homogeneous Markov process with generator $\cL_0$.

An alternative way to construct the process, and the one which we choose in this paper, is by approximation: we define it by concatenating an order of $1/\ep$ non-intersecting systems $\wcX^{(T)}$ run up to time $\ep$, and then taking $\ep\to0$.
We show in Section \ref{s.mBM} that this limit exists, and defines a continuous Markov process made out of non-intersecting paths; we will call this process a system of \emph{myopic non-intersecting Brownians motion in a periodic potential}, or \emph{mBM} for short, and refer to the parameter $T$ as the \emph{foresight} of the system. Although it should be possible to show that the process arising in the limit coincides with the one which could be defined through the generator approach described above, the construction by approximation is more intuitive and better suited for our methods, so it is the one we use.

The main tool which we introduce in the paper, and on which the proof of our main result is based, is a third construction of the mBM, based on a certain generalization of the acceptance-rejection sampling method, see Section \ref{sec:direc-mBM}.
This construction has the advantage that it involves no limiting procedure, and is very well suited to coupling techniques. In fact, we use a coupling argument to show that the algorithm coincides with the limiting procedure. We regard this acceptance-rejection sampling method for the mBM as one of the main technical contributions of this work.

\subsection{Main results}

The main result of this work is that, as $\kappa \to \infty$, the mBM in a periodic potential converges in law to a process which interpolates between TASEP with $N$ particles and a system of $N$ Poisson random walks conditioned on never intersecting.

Let us denote the limiting system as $\Y^{(L)}=(\Y^{(L)}_1, \Y^{(L)}_2, \dotsc, \Y^{(L)}_N)$, and we refer to it as a system of \emph{myopic non-intersecting random walks} (or \emph{mRW} for short) with foresight $L$.
It corresponds to a system of $N$ copies of a rate $1$ Poisson process which, as for the mBM, is dynamically conditioned at every time to not intersect for the next $L$ units of time.
Its explicit construction is provided in Section \ref{s.mRW} (based on a direct construction of its generator).

Our first results shows that $\Y^{(L)}$ indeed interpolates between TASEP and non-intersecting Poisson random walks.
Before stating it, let us briefly introduce the two limiting processes.

The \emph{totally asymmetric simple exclusion process (TASEP)} (with finitely many particles) consists of $N$ particles on $\zz$ with positions\footnote{This is the opposite of the usual ordering convention for the TASEP particles, but it fits the one we are employing for non-intersecting paths.} $Y_t(1)<\dotsm<Y_t(N)$ which evolve in continuous time as follows: each particle independently attempts to jump to its neighbor to the right at rate $1$, but jumps are allowed only if that site is unoccupied.
We think of this particle system as a Markov process taking in values in the \emph{Weyl chamber}
\begin{align*}
\Omega_N=\{x \in \rr^N\!: x_1<x_2<\dotsm<x_N\} \;\;\; \text{or, equivalently for this process, } \  \Omega_N^\zz = \Omega_N\cap \zz^N.
\end{align*}
If particles jump at rate $1$ but choose their target site to be its neighbor to the right with probability $p$ and the one to its left with probability $1-p$ (with the same exclusion restriction), the process is known as the \emph{asymmetric simple exclusion process (ASEP)}.

A system of \emph{non-intersecting Poisson random walks} consists of a collection of $N$ independent Poisson processes $Y_1,\dotsc,Y_N$ conditioned on the event
\[\big\{(Y_1(t),\dotsc,Y_N(t))\in\Omega_N^\zz,\quad\forall\,t\geq0\big\}.\]
Since this event has zero probability, this definition needs some care.
The standard way to define the process is through a Doob $h$-transform for $h$ chosen as the so-called \emph{Vandermonde determinant} $\Delta(y)=\prod_{1\leq i<j\leq N}(y_j-y_i)$, which is harmonic for the system of independent walks and vanishes on $\Omega_N^\zz$.
It leads to defining the system as the Markov process on $\Omega_N^\zz$ with generator
\begin{equation}
    \cL^{(\infty)}F(y)=\sum_{i=1}^N\big(F(y+e_i)-F(y))\frac{\Delta(y+e_i)}{\Delta(y)},\label{e.generator_niRW}
\end{equation}
where $e_i$ denotes the $i$-th canonical vector.
For more details, see e.g. \cite{konigOConnellRoch}.

\begin{thm}\label{t.interpolation}
    Consider a system of myopic non-intersecting random walks $\Y^{(L)}$ with foresight $L$.
    Then the limits 
    \[\Y^{(0^+)}=\lim_{L\searrow0}\Y^{(L)}\qqand\Y^{(\infty)}=\lim_{L\nearrow\infty}\Y^{(L)}\]
    exist in distribution with respect to Skorohod topology on compact sets.
    Furthermore:
    \begin{itemize}
        \item $\Y^{(0^+)}$ has the law of TASEP with $N$ particles.
        \item $\Y^{(\infty)}$ has the law of a system of $N$ Poisson random walks conditioned on never intersecting.
    \end{itemize}
\end{thm}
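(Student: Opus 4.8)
The plan is to analyze the generator $\cL^{(L)}$ of the mRW directly and track what happens to the jump rates as $L\searrow 0$ and $L\nearrow\infty$. Recall from the construction in Section \ref{s.mRW} that at a configuration $y\in\Omega_N^\zz$, the $i$-th particle attempts to jump to $y+e_i$ at a rate which is the base rate $1$ reweighted by the conditional probability that, starting from $y+e_i$, the system of $N$ independent Poisson walks does not intersect in the next $L$ units of time, divided by the analogous probability starting from $y$. Write $p_L(z)=\pp_z\big[(Z_1(s),\dotsc,Z_N(s))\in\Omega_N^\zz\ \forall s\in[0,L]\big]$ for independent Poisson walks $Z$ started at $z$; then the rate of the $i$-th jump is $r_i^{(L)}(y)=p_L(y+e_i)/p_L(y)$, with the convention that the jump is suppressed when $y+e_i\notin\Omega_N^\zz$ (i.e.\ when $y_i+1=y_{i+1}$), since then $p_L(y+e_i)=0$ for every $L>0$. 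The two regimes then come from the two extreme behaviors of $p_L$.

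\emph{The limit $L\searrow 0$.} Here the key point is that for $y\in\Omega_N^\zz$ with $y+e_i\in\Omega_N^\zz$, both $p_L(y)$ and $p_L(y+e_i)$ tend to $1$ as $L\to 0$, and moreover $1-p_L(z)=O(L^2)$ at such interior points: to leave $\Omega_N^\zz$ in time $L$ starting from a gap of size $\geq 1$ one needs at least two jumps of the Poisson walks (one up by the lower particle or down — but Poisson walks only go up — actually one needs the lower of an adjacent pair to jump and then catch the upper, or the upper to... ), so the probability is $O(L^2)$. Consequently $r_i^{(L)}(y)\to 1$ for every interior-admissible jump, while $r_i^{(L)}(y)=0$ whenever $y_i+1=y_{i+1}$. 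These are exactly the TASEP rates. To upgrade pointwise convergence of the rates (on the countable state space $\Omega_N^\zz$) to convergence in distribution on Skorokhod space over compact time intervals, I would use the standard fact that convergence of generators of pure-jump Markov processes on a fixed countable state space, with uniformly bounded total jump rates, implies convergence of the semigroups and hence weak convergence of the processes (e.g.\ via the coupling/martingale-problem argument, or Ethier--Kurtz); the rates $r_i^{(L)}$ are bounded by $1$ since $p_L$ is nonincreasing in the sense that $p_L(y+e_i)\le p_L(y)$ — this monotonicity should be checked, but it follows from a coupling of the independent Poisson walks started from $y$ and from $y+e_i$ keeping the order of the gaps.

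\emph{The limit $L\nearrow\infty$.} Here I would use that $p_L(z)/\Delta(z)$ converges, as $L\to\infty$, to a positive constant times the normalized probability underlying the $h$-transform; concretely, by the Karlin--McGregor formula $p_L(z)=\det\big(\pp[Z^{(1)}_L = \cdot]\big)$-type expression summed appropriately, and the classical asymptotics (see e.g.\ \cite{grabiner,konigOConnellRoch}) give $p_L(z)\sim c_N L^{-N(N-1)/2}\Delta(z)$ as $L\to\infty$, uniformly for $z$ in a fixed finite set, with $c_N>0$ independent of $z$. Hence $r_i^{(L)}(y)=p_L(y+e_i)/p_L(y)\to \Delta(y+e_i)/\Delta(y)$, which are exactly the rates in the generator \eqref{e.generator_niRW} of non-intersecting Poisson walks. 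Again the rates are uniformly bounded (here by $\Delta(y+e_i)/\Delta(y)\le y_{i+1}-y_i \le$ something depending only on the current configuration, so bounded on each compact time interval by a Gronwall/localization argument since the walks only move up at total rate $\le N$ per... ), so the same generator-convergence theorem applies to conclude convergence in distribution on Skorokhod space over compact sets.

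\emph{Main obstacle.} The routine part is the pointwise limits of the rates; the genuinely delicate point is the $L\to\infty$ asymptotics $p_L(z)\sim c_N L^{-N(N-1)/2}\Delta(z)$ with a \emph{$z$-independent} constant $c_N$, which is what makes the ratio converge to the Vandermonde ratio — one must invoke (or reprove) the sharp Karlin--McGregor asymptotics for the persistence probability of non-intersecting walks, and be careful that the error terms are uniform over the finitely many configurations reachable in a compact time interval. A secondary technical point, needed to pass from generator convergence to Skorokhod convergence, is uniform control of the total jump rate out of any state visited up to a fixed time $t$; this is handled by noting the particles only jump rightward at rate $\le N$, so the reachable state space up to time $t$ is (random but) almost surely finite, and on it the rates converge uniformly — this lets one run the standard coupling of the two chains started from the same point and bound the probability they have disagreed by time $t$.
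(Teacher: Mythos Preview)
Your approach is essentially the same as the paper's: both reduce to pointwise convergence of the jump-rate ratios $h_L(y+e_i)/h_L(y)$ on the countable state space $\Omega_N^\zz$, and then invoke a standard generator-to-process convergence result for non-explosive pure-jump chains. For $L\searrow 0$ you both observe that the ratio tends to $\uno{y+e_i\in\Omega_N^\zz}$; for $L\nearrow\infty$ the paper simply cites \cite[Thm.~1.1]{eichelsbacherKonig} for the asymptotic $L^{N(N-1)/4}h_L(y)\to\Delta(y)$, which is exactly the ``sharp Karlin--McGregor persistence asymptotic with $z$-independent constant'' you flag as the main obstacle (your exponent $N(N-1)/2$ is off by a factor of two, but this is immaterial since only the ratio survives).

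One correction worth making: your claimed monotonicity $p_L(y+e_i)\le p_L(y)$ is \emph{false}. Moving particle $i$ to the right shrinks the gap to particle $i+1$ but enlarges the gap to particle $i-1$, so no coupling of the kind you sketch can exist; concretely, for $N=2$, $y=(0,1)$, $i=2$ one has $p_L(0,2)>p_L(0,1)$ and hence $r_2^{(L)}(y)>1$. This does not actually damage your argument, because the localization you already invoke for the $L\to\infty$ case---only finitely many states are reachable by time $t$, and on that finite set the rates converge uniformly---works equally well for $L\to 0$ and renders the uniform-in-$y$ bound unnecessary. Just drop the monotonicity claim.
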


The fact that $\Y^{(L)}$ converges as $L\nearrow\infty$ to non-intersecting random walks follows almost by definition.
In the opposite limit $L\searrow 0$, the conditioning becomes effectively local in time: the process only avoids immediate intersections. This results in a dynamics where attempted jumps onto occupied sites are blocked—--exactly the exclusion rule in TASEP.

In the case when all particles start at the origin, $Y^{(\infty)}$ is the process version of the \emph{Charlier ensemble} from random matrix theory (see e.g. \cite{konigOConnellRoch}).
It is a determinantal process whose correlation kernel is given in terms of Charlier functions, and can be thought of as a discrete analogue of \emph{Dyson Brownian motion} which describes non-intersecting Brownian motions with the same initial condition.
In that sense, $\Y^{(L)}$ can be regarded as providing an interpolation between KPZ and random matrix objects. This complements other types of intermediate structures connecting KPZ models and random matrices, such as certain Markov dynamics on interlacing structures like Gelfand-Tsetlin patterns, which couple specific KPZ particle systems with certain random matrix ensembles (see e.g. \cite{ferraryWhy} and references therein).

We can finally state our main result.
In this statement (and throughout the rest of the paper) we extend the notation $\caja{\X}$ to an mBM $\X^{(T)}=(\X_1^{(T)},\dotsc,\X_N^{(T)})$ coordinatewise:
\begin{equation}\label{e.caja-vec}
  \cajab{\X^{(T)}}=\big(\cajab{\X_1^{(T)}},\dotsc,\cajab{\X_N^{(T)}}\big).
\end{equation}

\begin{thm}\label{t.main}
    Let $\X^{(T^\kappa)}$ be an mBM in a periodic potential with foresight $T^\kappa$ for the equation \eqref{e.SDE} with parameter $\kappa$ and starting from a point $x\in \Omega_N$ such that $x_i\notin\zz+\frac12$ for each $i$.
    Assume that $\frac{T^\kappa}{\lambda^k}\xrightarrow[\kappa\to\infty]{}L>0$, with $\lambda^\kappa=e^{2\kappa+o(\kappa)}$ as in \eqref{e.FW}.
    Then
    \begin{align*}
    \cajab{\X^{(T^\kappa)}}(\lambda^\kappa \cdot) \xrightarrow[\kappa\to\infty]{} \Y^{(L)}(\cdot)
    \end{align*}
    in distribution, with respect to the local Skohorod topology on càdlàg functions, where $\Y^{(L)}$ is the mRW with foresight $L$.
\end{thm}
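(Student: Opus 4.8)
The plan is to prove the theorem by a coupling argument built on the acceptance-rejection construction of the mBM from Section~\ref{sec:direc-mBM} and the analogous construction of the mRW. Both processes can be realized as explicit measurable functionals of a family of independent driving paths—$N$ independent copies of \eqref{e.SDE} in one case, $N$ independent rate-$1$ Poisson processes in the other—together with i.i.d.\ data governing the resamplings, and I would drive both by the same randomness, after rescaling time by $\lambda^\kappa$ and passing to box indices on the diffusive side. The reason to work with this construction rather than with the defining approximation scheme is precisely that it carries no internal limit, so that $\kappa\to\infty$ can be taken inside a fixed functional without having to interchange it with the $\ep\to0$ limit that defines the mBM.

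First I would record the two inputs. (i) By Proposition~\ref{p.FW} applied to each coordinate, together with the independence of the $N$ copies, the box-index processes $\cajab{\X_i}(\lambda^\kappa\cdot)$, $i=1,\dots,N$, of any of the independent trial systems converge jointly in law, in the Skorohod topology, to $N$ independent rate-$1$ Poisson processes; by Skorohod's representation I may assume this holds almost surely, which furnishes the coupling between the diffusive and the Poissonian driving families. (ii) The analytic core is a metastability statement: at scale $\lambda^\kappa$, and with probability tending to $1$, a diffusive trial system is non-intersecting throughout its foresight window $[t,t+T^\kappa]$ if and only if its box-index process stays in $\Omega_N^\zz$ throughout the corresponding window. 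This rests on timescale separation—a rightward saddle crossing costs time $\Theta(\lambda^\kappa)$, a leftward one strictly more (so leftward crossings disappear in the limit even after speeding up), whereas two particles sharing a box relax to within $O(\kappa^{-1/2})$ of its minimum and collide on a time scale much shorter than $\lambda^\kappa$—so that two diffusions whose box indices never coincide on the window stay ordered there, while two whose box indices do coincide collide well before the window closes, the transient configurations (a particle on a slope between two minima) contributing only a vanishing probability. Since $T^\kappa/\lambda^\kappa\to L$, this matches the rejection events of the mBM algorithm with those of the mRW algorithm of foresight $L$.

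With these in hand I would propagate the comparison through the algorithm. A resampling is triggered by a rejection event and restarts from the current configuration, which at scale $\lambda^\kappa$ (off a null set) has all particles near box minima, so resampling from the exact diffusive configuration and from the corresponding box configuration coincide in the limit; hence the rescaled resampling times of the mBM algorithm converge to those of the mRW algorithm, and on each inter-resampling interval the rescaled box-index path converges to the corresponding Poissonian segment. An a priori estimate—on a compact time interval only finitely many resamplings occur and each involves finitely many retries, both because the per-step acceptance probability is bounded below uniformly in $\kappa$ (e.g.\ "no particle crosses a saddle during the window" already has probability bounded below) and because a validated trial stays valid for a positive amount of rescaled time—allows iterating this finitely many times and concluding that $\cajab{\X^{(T^\kappa)}}(\lambda^\kappa\cdot)$ converges in the local Skorohod topology to the output of the mRW acceptance-rejection construction, which agrees with the generator construction of Section~\ref{s.mRW}, i.e.\ with $\Y^{(L)}$. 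The case $\caja{x}\notin\Omega_N^\zz$ (two coordinates of $x$ starting in the same box) is treated separately: it produces an initial transient of $O(1)$ length which, after speeding up, collapses to the instantaneous relaxation by which $\Y^{(L)}$ is started from $\caja{x}$.

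The hard part will be input (ii) and the uniformity it must satisfy for the third step: one needs to control the intersection probability of the diffusive trial systems over windows of length $T^\kappa\sim L\lambda^\kappa$ uniformly over the configurations that can occur as resampling states, to show this probability converges to the Poissonian one, and to show the resulting errors are summable over the tight number of resamplings on a compact interval. This calls for quantitative, $\kappa$-uniform metastability and exit-time estimates for the constrained $N$-particle system, of the kind alluded to in Section~\ref{sec:F-W} for a single particle. Once these and the coupling of (i) are available, the remaining steps—tightness, identification of the subsequential limit, and transferring the almost-sure continuity of the non-intersection functional from the limit to the prelimit—are routine.
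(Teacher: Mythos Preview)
Your approach is essentially the paper's: both exploit the acceptance--rejection constructions (Algorithms~\ref{a.A} and~\ref{a.B}), couple the driving diffusions with the driving Poisson processes via Proposition~\ref{p.FW}, match the rejection events through a statement like your input~(ii) (the paper's Lemma~\ref{l.both_hit}), and then iterate.

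One subtlety you gloss over deserves a flag. You assert that at a resampling time the diffusive configuration ``has all particles near box minima,'' but the resampling time $t_n=\tau_n-T^\kappa$ is \emph{anticipative}: it depends on the trajectory on $[t_n,t_n+T^\kappa]$ through the exit time $\tau_n$. Metastability estimates at deterministic times (your heuristic, the paper's Lemma~\ref{l.typical_time_close_to_integers}) therefore do not apply directly. The paper resolves this by a decoupling trick: it conditions on the box vector of $\wcX^{(T^\kappa)}(T^\kappa)$ and couples the continuation after time $T^\kappa$ with an independent copy started at that integer vector (Remark~\ref{r.special_coupling}), so that the exit time becomes conditionally independent of the path on $[0,T^\kappa/2]$ and the deterministic-time estimate can be invoked. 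A related issue is that after one iteration the two algorithms sit at slightly different times $t_1^\X/\lambda^\kappa\neq t_1^\Y$ and slightly different positions; restarting the coupling then requires a quantitative statement that two copies of the diffusion started at nearby times and nearby points merge quickly on scale $\lambda^\kappa$ (the paper's Lemma~\ref{l.coupling_different_initial_conditions}), which your outline does not isolate. None of this changes the architecture of your argument, but these are the places where the actual work lies, beyond the single-particle metastability you cite.
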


In words, if the foresight $T^\kappa$  grows like $\lambda^\kappa L$ and time is rescaled at the same scale $\lambda^\kappa$, then the process of integer sites mBM visited by the mBM, $\caja{\X^{(T^\kappa)}}$, converges to an mRW $\Y^{(L)}$. This process, in turn, interpolates between TASEP (as $L\searrow0$) and non-intersecting Poisson random walks (as $L\nearrow\infty$).

\begin{rem}\label{r.non-myopic}
  As a consequence of Theorems \ref{t.interpolation} and \ref{t.main}, and in the setting of the second result, we have that, as $\kappa\to\infty$ and then $L\to\infty$, $\caja{\X^{(T^\kappa)}}(\lambda^\kappa \cdot)$ converges in law to the system $\Y^{(\infty)}$ of non-intersecting Poisson processes.
  This justifies our conjecture that if the system of diffusions is conditioned on never intersecting, then it does not feel the fact that the confinement makes it easy for particles in different boxes to stay apart for a long time.
\end{rem}

\section{Diffusions conditioned on non-intersection in a finite interval}\label{s.diffusions}

The basic object underlying our construction of the mBM is the process $\wcX^{(T)}$, introduced in Section \ref{s.myopic-intro}.
This is a system of $N$ copies of the diffusion \eqref{e.SDE}, conditioned on the event that they do not intersect up to time $T$.
If the system starts at $x\in\Omega_N$, this event has positive probability and $\wcX^{(T)}$ can be constructed directly by conditioning, but for $x\in\partial\Omega_N$ (that is, when two or more initial positions coincide) the conditioning becomes singular and some extra work is needed.

Although we do not need to deal explicitly with $\wcX^{(T)}$ started at points $x\in\partial\Omega_N$, our construction of the mBM requires some control of the process uniformly over such $x$, which effectively requires us to understand the process started at the boundary.
We were somewhat surprised to find that, in our setting of general diffusions, this does not appear to have been addressed in the literature---or at least, we were unable to find any relevant results.
So we will provide a proof of the result we need, which is that the one-point distributions of the process are well defined, and absolutely continuous with respect to the Lebesgue measure, at any fixed time.

Since the periodicity plays no role in this result, we state it in a more general setting.
We begin with the SDE
\begin{equation}\label{e.sde-2}
\d\X(t) = g(\X(t))\d t + \d B_t,
\end{equation}
where the function $g$ is $C^\infty$ and it and all of its derivatives are bounded.
Next we consider a system $\X=(\X_1,\dotsc,\X_N)$ of independent solutions of \eqref{e.sde-2} started at $x\in\Omega_N$ and, for $T>0$, let $\wcX^{(T)}$ denote a process which has the law of this system conditioned on non-intersection on $[0,T]$.

\begin{prop}\label{p.mbm-bdry}
  Let $(x^n)_{n\geq0}\subseteq\Omega_N$ be such that $x^n\longrightarrow\bar x\in\partial\Omega_N$ as $n\to\infty$.
  For fixed $t>0$, let $\mathbf{P}^{\tts t,T}_{\!x^n}$ denote the law of $\wcX^{(T)}(t)$ with the process started at $x^n$.
  Then the sequence $(\mathbf{P}^{\tts t,T}_{\!x^n})_{n\geq0}$ converges weakly to a probability measure $\mathbf{P}^{\tts t,T}_{\!\bar x}$, which is supported on $\Omega_N$, and is absolutely continuous with respect to the Lebesgue measure on $\rr^N$.
\end{prop}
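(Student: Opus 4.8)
The plan is to realize the conditioned one-point law explicitly via the Karlin--McGregor formula and control its limit as the starting point approaches the boundary. Write $p_t(x,y)$ for the transition density of a single solution of \eqref{e.sde-2}, and let $\det\bigl(p_t(x_i,y_j)\bigr)_{i,j=1}^N$ be the Karlin--McGregor determinant, which gives the (sub-probability) density at time $t$ for the independent system killed upon intersection. Fix $t>0$ and $T\geq t$ (the case $T<t$ requires an extra conditioning on $[t,T]$ but is handled the same way by integrating against a suitable survival probability). Then for $x^n\in\Omega_N$,
\begin{equation*}
\mathbf{P}^{\tts t,T}_{\!x^n}(dy)=\frac{\det\bigl(p_t(x^n_i,y_j)\bigr)\,\Phi^{T-t}(y)}{Z^{T}(x^n)}\,dy,
\end{equation*}
where $\Phi^{T-t}(y)=\mathbf{P}_y(\text{no intersection on }[0,T-t])$ is the survival probability from $y\in\Omega_N$, and $Z^{T}(x^n)$ is the normalizing constant (the non-intersection probability from $x^n$ up to time $T$). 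The heart of the argument is the behavior of the determinant as $x^n\to\bar x\in\partial\Omega_N$: since rows collapse, both numerator and denominator vanish, and one must extract the leading order of the degeneracy.

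The key step is a Taylor expansion of the determinant near the coincident configuration. Group the coordinates of $\bar x$ into blocks of equal values; within each block of size $k$, a standard argument (factor out a Vandermonde-type minor by row reduction, using smoothness of $p_t$ in the first variable, which holds because $g$ and its derivatives are bounded so $p_t\in C^\infty$ with controlled derivatives) shows that
\begin{equation*}
\det\bigl(p_t(x^n_i,y_j)\bigr)=\Delta_{\mathrm{loc}}(x^n)\,\bigl[W_t(\bar x,y)+o(1)\bigr],
\end{equation*}
where $\Delta_{\mathrm{loc}}(x^n)=\prod_{\text{blocks}}\prod_{i<j\text{ in block}}(x^n_j-x^n_i)$ is the product of the within-block Vandermonde factors, and $W_t(\bar x,y)=\det$ of a matrix whose rows corresponding to a block of size $k$ at location $a$ are $p_t(a,y_j),\partial_a p_t(a,y_j),\dots,\partial_a^{k-1}p_t(a,y_j)$, i.e. a confluent (Wronskian-type) Karlin--McGregor determinant. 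The same expansion applied to the survival probability gives $Z^T(x^n)=\Delta_{\mathrm{loc}}(x^n)\,[c_T(\bar x)+o(1)]$ for a constant $c_T(\bar x)=\int W_T(\bar x,y)\,dy\in(0,\infty]$; one must check $c_T(\bar x)>0$, which follows because the confluent determinant $W_T(\bar x,\cdot)$ is not identically zero (it is a nonnegative density for the system started from the degenerate configuration, strictly positive on $\Omega_N$ by a Harnack/support argument for the killed process) and $<\infty$ because it is dominated by the free (unkilled) density. Cancelling the common factor $\Delta_{\mathrm{loc}}(x^n)$, the quotient converges pointwise to
\begin{equation*}
\mathbf{P}^{\tts t,T}_{\!\bar x}(dy)=\frac{W_t(\bar x,y)\,\Phi^{T-t}(y)}{c_T(\bar x)}\,dy,
\end{equation*}
which is manifestly absolutely continuous with respect to Lebesgue measure on $\rr^N$, supported on $\overline{\Omega_N}$; and since $W_t(\bar x,y)=0$ whenever two coordinates of $y$ coincide (two rows equal) — or more carefully, the density vanishes on $\partial\Omega_N$ so the measure puts no mass there — it is supported on $\Omega_N$.

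To upgrade pointwise convergence of densities to weak convergence of the measures, I would invoke Scheffé's lemma: it suffices that the densities $f_n(y)=\det\bigl(p_t(x^n_i,y_j)\bigr)\Phi^{T-t}(y)/Z^T(x^n)$ converge pointwise $dy$-a.e.\ to the density $f(y)$ above, which we have, since each $f_n$ and $f$ integrates to $1$; convergence in $L^1$, hence weak convergence, follows. The one genuine technical point needing care is uniform control of the remainder in the Taylor expansion so that the $o(1)$ is legitimate — this needs local boundedness of $\partial_a^k p_t(a,y)$ in $a$ near each block location, uniformly enough in $y$ to pass to the limit inside the determinant entrywise (entrywise convergence of a fixed-size determinant is automatic once each entry converges). \textbf{The main obstacle} I anticipate is precisely establishing that the limiting confluent normalization $c_T(\bar x)$ is strictly positive and finite, i.e.\ that the confluent Karlin--McGregor determinant $W_T(\bar x,\cdot)$ is a bona fide nontrivial integrable density: finiteness is easy by domination, but positivity requires knowing the killed independent system, started from a boundary configuration, still has positive probability of surviving on $[0,T]$ and of being found near a given point of $\Omega_N$ — this is intuitively clear (the particles can instantly spread out) but needs a short argument, e.g.\ via the strong Markov property at a small time $s>0$, at which point the system is in $\Omega_N$ with positive density, reducing to the already-understood interior case.
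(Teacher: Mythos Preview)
Your overall strategy matches the paper's: write the one-point density via Karlin--McGregor, extract the confluent (Wronskian-type) determinant $W_t(\bar x,y)$ by row reduction as $x^n\to\bar x$, cancel the common local Vandermonde factor, and pass to the limit. The paper integrates against bounded continuous test functions and uses dominated convergence (via Gaussian bounds $|\partial_x^k p_t(x,y)|\le Ce^{-c(x-y)^2}$ from parabolic PDE theory, which you will also need for your ``uniform control of the remainder''); your Scheff\'e route is a legitimate variant.

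The gap is precisely where you locate it: positivity of $c_T(\bar x)=\int W_T(\bar x,y)\,dy$. Your proposed argument does not work as phrased. The killed independent system started from $\bar x\in\partial\Omega_N$ has survival probability \emph{exactly zero} (two particles coincide at time $0$), so there is no ``positive probability of surviving'' to invoke, and no strong Markov property at time $s>0$ to apply --- the process has already been killed. What must be shown is that the confluent object $W_T(\bar x,\cdot)$, which only appears after dividing out the vanishing Vandermonde, is not identically zero; your Harnack/support sketch presupposes this.

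The paper's fix is concrete. Split via Cauchy--Binet/Andr\'eief at a small time $h$:
\[
c_T(\bar x)=\int_{\Omega_N}\!\int_{\Omega_N} W_h(\bar x,\xi)\,\det\!\big(p_{T-h}(\xi_i,y_j)\big)\,d\xi\,dy.
\]
The second determinant is strictly positive on $\Omega_N$, and $W_h(\bar x,\xi)\ge 0$ as a pointwise limit of nonnegative Karlin--McGregor determinants, so it suffices to find one $\xi\in\Omega_N$ with $W_h(\bar x,\xi)>0$. Pair $W_h(\bar x,\cdot)$ against $\prod_j f_j(\xi_j)$ for compactly supported test functions with supports separating the distinct values in $\bar x$, and send $h\to 0$: the limit factors as a product over clusters, each factor being the classical Wronskian $\det\big(f_i^{(\ell)}(\bar x_i)\big)$ of the test functions associated to that cluster. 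Choosing the $f_j$'s so that these Wronskians are nonzero forces $W_h(\bar x,\cdot)$ to be nonzero for small $h$, and hence (by nonnegativity and antisymmetry) positive somewhere in $\Omega_N$.
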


\begin{rem}
  The result in the proposition can be extended to multipoint distributions straightforwardly.
  In fact, the result also yields convergence in distribution of the whole process restricted to $(0,T]$, uniformly over compact subsets of this interval, because once the limiting measure (supported on $\Omega_N$) at any fixed time $\delta>0$ is constructed, the distribution of the trajectory after time $\delta$ is defined by the standard conditioning mentioned above.
  Extending this to convergence in distribution over all of $[0,T]$ requires some additional work, which we will not pursue in this paper.
\end{rem}

Before turning to the proof, we need to state a classical PDE result.
Let $p_t(x,y)$ be the density (in $y$) of the solution of \eqref{e.sde-2} started at $x$, which exists because $g$ is $C^\infty$.
Then $p_t(x,y)$ solves the Kolmogorov forward (or Fokker-Plank) equation
\[\frac{\p}{\p t}p_t(x,y)=\frac12\frac{\partial^2}{\partial y^2}p_t(x,y)-g(y)\frac{\partial}{\partial y}p_t(x,y)\]
with initial condition $\lim_{t\searrow0}p_t(x,y)=\delta_x(y)$.
The results in \cite[Sec. 9.6]{friedman} and the fact that $g$ is $C^\infty$ imply that $p_t$ is $C^\infty$ in $x$ and $y$ and that for any $t>0$ and any $x\in\rr$, and for each $k\geq0$, there are constants $c,C>0$ so that
\begin{equation}\label{e.friedman-bd}
    \big|\tfrac{\p^k}{\p y^k}p_t(x,y)\big|\leq C e^{-c(x-y)^2}.
\end{equation}
Moreover, the constants $c$ and $C$ can be chosen uniformly over $x$ in compact sets (this is not stated explicitly in \cite{friedman}, but clearly follows from its arguments).

\begin{proof}[Proof of Proposition \ref{p.mbm-bdry}]
  By the Karlin-McGregor formula (see e.g. \cite{grabiner} for a statement that applies in our setting) we have that, for $x,y\in\Omega_N$, 
  \begin{equation}\label{e.KM}
    \det\!\big[p_t(x_i,y_j)\big]_{i,j=1}^N=\frac{\pp_{x}\big(\X(t)\in dy,\,\X(s)\in\Omega_N\;\forall\ts s\in[0,t]\big) }{dy }.
  \end{equation}
  Then, given any $f\!:\Omega_N\longrightarrow\rr$ which is continuous and bounded, and assuming $t<T$, we have
  \begin{equation}\label{e.xn-det-4}
    \ee_{x^n}\tsm\big(f(\wcX^{(T)}(t))\big)=\frac{\int_{\Omega_N}\!\int_{\Omega_N}f(y)\det\!\big[p_t(x^n_i,y_j)\big]_{i,j=1}^N\det\!\big[p_{T-t}(y_i,z_j)\big]_{i,j=1}^N\,dz\,dy}{\int_{\Omega_N}\!\det\!\big[p_T(x^n_i,y_j)\big]_{i,j=1}^N\,dy}
  \end{equation}
  by the Markov property.
  If $t\geq T$ then the numerator has to be modified suitably, but nothing changes in the argument that follows, so we will restrict to the case $t<T$.

  Suppose first for simplicity that we have $\bar x_1=\bar x_2$ but the remaining coordinates of $\bar x$ are different.
  In this case we can divide both the numerator and the denominator on the right hand side of \eqref{e.xn-det-4} by $x^n_2-x^n_1$ and absorbe that factor in the second row of each determinant involving $x^n$.
  Focusing on the numerator, if we subtract the first row divided by the same factor from the second row in that determinant, the resulting row has entries $\frac{p_t(x^n_2,y_j)-p_t(x^n_1,y_j)}{x^n_2-x^n_1}$.
  This converges pointwise to $p_t'(\bar x_1,y_j)$ as $n\to\infty$, while it is bounded above by $\sup_{\xi\in[-L,L]}|p_t'(\xi,y_j)|$ for some $L>0$.
  Thus, and thanks to \eqref{e.friedman-bd}, we may use the dominated convergence theorem to pass the limit as $n\to\infty$ inside the integral to get a limiting determinant with a modified second row.
  We can proceed in the same way for the denominator, and hence writing $p_{i,t}=p_t$ if $i\neq2$ and $p_{2,t}=p'_t$, we get
  \begin{equation}\label{e.fxn-lim}
    \ee_{x^n}\!\big(f(\wcX^{(T)}(t))\big)\xrightarrow[n\to\infty]{}\frac{\int_{\Omega_N}\int_{\Omega_N}f(y)\det\!\big[p_{i,t}(\bar x_i,y_j)\big]_{i,j=1}^N\det\!\big[p_{T-t}(y_i,z_j)\big]_{i,j=1}^N\,dz\,dy}{\int_{\Omega_N}\!\det\!\big[p_{i,T}(\bar x_i,y_j)\big]_{i,j=1}^N\,dy},
  \end{equation}
  assuming of course that the denominator on the right hand side does not vanish.
  The result which we are trying to prove clearly follows from \eqref{e.fxn-lim}, if the denominator is not zero. Thus, we concentrate in showing that that condition holds.

  Given any $h\in(0,T)$ we have
  \[\int_{\Omega_N}\!\det\!\big[p_{i,T}(\bar x_i,y_j)\big]_{i,j=1}^N\,dy
  =\int_{\Omega_N}\int_{\Omega_N}\det\!\big[p_{i,h}(\bar x_i,\xi_j)\big]_{i,j=1}^N\det\!\big[p_{T-h}(\xi_i,y_j)\big]_{i,j=1}^N\,d\xi\,dy,\]
  again by the Markov property and \eqref{e.KM} (this follows alternatively also from the generalized Cauchy-Binet, or Andr\'eief, identity \cite{andreief}).
  The second determinant on the right hand side is positive for all $\xi,y\in\Omega_N$ by \eqref{e.KM}, so the positivity of the left hand side will follow if we prove that, for some $h\in(0,T)$, $\det\!\big[p_{i,h}(\bar x_i,\xi_j)\big]_{i,j=1}^N$ is non-negative for all $\xi\in\Omega_N$ and is positive for some $\xi\in\Omega_N$.

  The first condition is clearly true for any $h>0$, because the determinant is the limit as $n\to\infty$ of $\frac1{x^n_2-x^n_1}\det\!\big[p_{h}(x^n_i,\xi_j)\big]_{i,j=1}^N$, which is itself non-negative by \eqref{e.KM}.
  For the second one, let $f_1,\dotsc,f_N$ be bounded $C^\infty$ functions with compact support.
  Then, writing $\tilde f_i=f_i$ for $i\neq2$ and $\tilde f_2=f_2'$, we have
  \begin{equation}\label{e.lim-fullint-det}
    \lim_{h\to0}\int_{\rr^N}\det\!\big[p_{i,h}(\bar x_i,\xi_j)\big]_{i,j=1}^N\prod_{j=1}^Nf_j(\xi_j)\,d\xi
    =\det\!\big[\tilde f_{i}(\bar x_j)\big]_{i,j=1}^N,
  \end{equation}
  as can be seen e.g. by expanding the determinant on the left hand side by definition and then computing the limit of each of the $N$ integrals.
  Now we can choose the $f_i$'s so that each of their supports contains $\bar x_i$ but none of the other $\bar x_j$'s, so that
  \[\det\!\big[\tilde f_{i}(\bar x_j)\big]_{i,j=1}^N=W(f_1,f_2)(\bar x_1)\prod_{j=3}^Nf_j(\bar x_j),\]
  where $W(f_1,f_2)$ denotes the Wronksian, i.e. $W(f_1,f_2)(x)=\det\Big[\begin{smallmatrix}f_1(x) & f_2(x)\\ f_1'(x) & f_2'(x)\end{smallmatrix}\Big]$.
  If we choose $f_1$ and $f_2$ so that the Wronskian is not zero, then the right hand side of \eqref{e.lim-fullint-det} is non-zero, which means that if $h$ is small enough then there has to be some $\xi\in\rr^N$ so that $\det\!\big[p_{i,h}(\bar x_i,\xi_j)\big]_{i,j=1}^N\neq0$.
  Note that the entries of this $\xi$ are necessarily different, and then by antisymmetry of the determinant we conclude that we may choose $\xi$ to be in $\Omega_N$.
  But for such $\xi$ we know already that the determinant is non-negative, so we conclude that, as desired, it is strictly positive for that choice of $\xi$.

  For a more general $\bar x\in\p\Omega_N$ we can proceed in a similar way.
  If all points in a cluster $x^n_{i},\dotsc,x^n_{i+k-1}$ are coalescing as $n\to\infty$, we divide the determinants in \eqref{e.xn-det-4} by $\prod_{i\leq j<j'\leq i+k-1}(x^n_{j'}-x^n_j)$ and perform successive row operations in a way similar to what we did above to produce a sequence of rows with entries of the form $p_T(\bar x_i,y_j),p'_T(\bar x_i,y_j),\dotsc,p^{(k-1)}_{T}(\bar x_i,y_j)$ in the limit (see e.g. \cite{tracyWidom-noninter} for a similar argument).
  The denominator in the analog of the right hand side of \eqref{e.fxn-lim} can be shown to not vanish in the same way as above, by choosing the $f_i$'s appropriately (in particular, now the Wronskian $W(f_i,\dots,f_{i+k-1})(\bar x_i)$ needs to be non-zero).
  On the other hand, if there is more than one cluster of points coalescing as $n\to\infty$, we can simply repeat the above argument for each of the clusters to get determinants where multiple groups of rows are replaced by successive derivatives, and argue similarly; we omit the details.
\end{proof}

As we mentioned, our construction of the mBM will require us to have some uniform control on $\wcX^{(T)}$ over all initial data $x\in\Omega_N$.
What we actually need is to show that the distribution of the process at time $T$ is concentrated away from the boundary of $\Omega_N$, uniformly in the initial condition.
We state and prove the precise result next.
Although it is intuitively easy to understand it, in view of Proposition \ref{p.mbm-bdry}, its proof requires some work.
Thus, since it simplifies the argument a bit, in this part we go back to the periodic setting (see also Remark \ref{r.nonper}).

\begin{prop}\label{p.uniform_separation}
    Let $\wcX^{(T)}$ denote a system of $N$ Brownian motions in a periodic potential $v$ (i.e., $N$ solutions of \eqref{e.SDE}) conditioned on non-intersection on $[0,T]$.
    Then for every $\delta\in(0,1)$ there is a $\gamma>0$ such that for any initial condition $x\in\Omega_N$,
    \begin{align*}
    \pp_x\Big(\tsm\min_{i}\big(\wcX^{(T)}_i(T)-\wcX^{(T)}_{i-1}(T)\big)>\gamma\Big) \geq 1-\delta.
    \end{align*}
\end{prop}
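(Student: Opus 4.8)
The plan is to establish the uniform separation estimate in two stages, first controlling the case where the initial condition $x$ is already ``well inside'' $\Omega_N$ (meaning all coordinates are at bounded distance and all gaps are bounded below), and then handling general $x$ by running the process for a short initial time $h$ and using Proposition~\ref{p.mbm-bdry} to reduce to the well-inside case. For the well-inside case, I would exploit the explicit Karlin--McGregor representation \eqref{e.KM}: conditioned on non-intersection on $[0,T]$, the density of $\wcX^{(T)}(T)$ at $y\in\Omega_N$ is proportional to $\det[p_T(x_i,y_j)]$, and using the Gaussian-type bounds \eqref{e.friedman-bd} together with the periodicity of the potential (which gives uniform-in-$x$ two-sided heat-kernel estimates after accounting for the drift pushing particles to the right), one can bound this density from above and below on $\Omega_N$ by quantities that are uniform over $x$ in a fixed compact region. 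The mass of $\{\min_i(y_i-y_{i-1})\le\gamma\}$ under such a density tends to $0$ as $\gamma\to0$ uniformly over that compact family of initial conditions, since the relevant determinant vanishes to first order on the diagonal and the normalization is bounded below away from zero.

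The second stage is the reduction of general $x\in\Omega_N$ to the well-inside case. Here I would fix a small $h\in(0,T)$ and write, via the Markov property, $\wcX^{(T)}(T)$ as the process $\wcX^{(T-h)}$ started from the distribution of $\wcX^{(T)}(h)$. By Proposition~\ref{p.mbm-bdry} (and the remark following it), the one-point law of $\wcX^{(T)}(h)$ is a probability measure supported on $\Omega_N$ which depends continuously on the initial condition, including the boundary points; moreover, by periodicity we may shift $x$ by an integer vector to assume its coordinates lie in a bounded window, so the family of possible initial conditions is compact (after this reduction, together with the closure $\bar\Omega_N$ in the window). It follows that the family of laws $\{\text{Law}(\wcX^{(T)}(h)) : x\in\Omega_N\}$, after the integer shift, is tight and in fact precompact, and every limit point assigns zero mass to $\partial\Omega_N$; hence there is a compact set $K\subset\Omega_N$ (with all gaps bounded below by some $\gamma_0>0$) carrying mass at least $1-\delta/2$, uniformly in $x$. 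Applying the well-inside estimate to $\wcX^{(T-h)}$ started in $K$ gives, with probability at least $1-\delta/2$ more, a final separation exceeding some $\gamma>0$, and combining the two bounds yields the claim.

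The main obstacle I anticipate is making the compactness/uniformity in the second stage genuinely rigorous: a priori the family of initial laws $\text{Law}(\wcX^{(T)}(h))$ is only known to converge along boundary-approaching sequences (that is what Proposition~\ref{p.mbm-bdry} gives), so I need to upgrade this to a uniform statement over the whole (shifted, bounded) parameter region. The natural route is to argue by contradiction: if uniformity failed there would be a sequence $x^n$ with $\text{Law}(\wcX^{(T)}(h))$ placing mass $\ge\delta/2$ within distance $1/n$ of $\partial\Omega_N$; passing to a subsequence with $x^n\to\bar x\in\bar\Omega_N$ (in the bounded window, using periodicity), Proposition~\ref{p.mbm-bdry} forces the limiting law to be supported on $\Omega_N$ and absolutely continuous, hence to charge a neighborhood of $\partial\Omega_N$ with arbitrarily small mass --- contradicting the weak convergence. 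One subtlety to check is that the convergence in Proposition~\ref{p.mbm-bdry} is robust enough (it is weak convergence to an absolutely continuous limit) to conclude vanishing boundary mass in the limit; this is standard but should be spelled out. A second, more technical point is ensuring the heat-kernel lower bounds used in the first stage are genuinely uniform over the compact family --- this should follow from \eqref{e.friedman-bd} and its uniformity over compacts, combined with periodicity to handle the translation structure, but the drift term $-\kappa v'$ must be tracked carefully so that the normalization constant in the Karlin--McGregor formula stays bounded below.
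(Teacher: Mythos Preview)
Your reduction to a bounded window via periodicity is where the argument breaks. The periodicity of $v'$ only gives invariance of the conditioned system under a \emph{uniform} integer shift $x\mapsto x+(k,\dots,k)$: shifting different coordinates by different integers preserves the law of each independent diffusion but changes the non-intersection condition (and the gaps you are trying to control). Consequently you can only arrange, say, $x_1\in[-1/2,1/2)$, while the remaining coordinates---and in particular the gaps $x_{i+1}-x_i$---may be arbitrarily large. Your family of shifted initial conditions is therefore not compact, and running the process for a short time $h$ does not rescue this: if two clusters of particles start far apart they remain far apart at time $h$ with overwhelming probability, so no fixed compact $K\subset\Omega_N$ can capture mass $\ge 1-\delta/2$ uniformly over all $x\in\Omega_N$.

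This is exactly the difficulty the paper's proof isolates and treats separately. The paper also argues by contradiction and extracts a convergent subsequence, but it distinguishes the case where the $x^n$ stay in a compact set (handled much as you suggest, via the Karlin--McGregor formula and Proposition~\ref{p.mbm-bdry}) from the case where some gap $x^n_{k+1}-x^n_k\to\infty$. In the latter it shifts the right cluster back by an integer $\ell_n\to\infty$, uses the Gaussian decay \eqref{e.friedman-bd} to show that the off-block entries of the Karlin--McGregor determinant vanish in the limit, and thereby decouples the two clusters into independent lower-dimensional conditioned systems to which the compact-case argument applies. Your proposal is missing this cluster-decoupling step, and without it the reduction to a compact family of initial data cannot be completed.
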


\begin{proof}
Assume that the claim does not hold.
Then there is a $\delta\in(0,1)$, a sequence $\gamma_n\searrow0$ and a sequence of initial conditions $x^n\in\Omega_N$, so that
\begin{equation}\label{e.contraclaim}
    \pp_{x^n}\!\Big(\wcX^{(T)}(T)\in\Omega^{\gamma_n}_N\Big)<1-\delta
\end{equation}
for each $n>0$, where
\begin{equation}\label{e.Omega-gamma}
    \Omega^{\gamma}_N=\big\{x\in\Omega_N\!:\min_{i=1,\dotsc,N-1}(x_{i+1}-x_i)>\gamma\big\}
\end{equation}
(note that $\Omega^0_N=\Omega_N$).

Suppose first that there is a compact set $K\subseteq\rr^N$ so that $x^n\in\Omega_N$ for each $N$.
Then $x^n$ has a subsequence, which we still denote by $x^n$, converging to some $\bar x$ in the closure of $\Omega_N$.
Now fix some $\gamma>0$ and observe that, for large $n$,
\begin{equation}\label{e.xndet}
\pp_{x^n}\!\Big(\wcX^{(T)}(T)\in\Omega^{\gamma_n}_N\Big)\geq\pp_{x^n}\!\Big(\wcX^{(T)}(T)\in\Omega^{\gamma}_N\Big)=\frac{\int_{\Omega^{\gamma}_N}\tsm\det\!\big[p_T(x^n_i,y_j)\big]_{i,j=1}^N\ts dy}{\int_{\Omega_N}\tsm\det\!\big[p_T(x^n_i,y_j)\big]_{i,j=1}^N\ts dy},
\end{equation}
where the equality comes from the Karlin-McGregor formula \eqref{e.KM}.
If $\bar x\in\Omega_N$ then we have $\int_{\Omega_N}\tsm\det\!\big[p_T(\bar x_i,y_j)\big]_{i,j=1}^N\ts dy=\pp_{\bar x}(\X(T)\in\Omega_N)>0$ (where $\X$ is a vector of $N$ independent solutions of \eqref{e.SDE}), and thus the bound \eqref{e.friedman-bd} and the dominated convergence theorem imply that we may take $n\to\infty$ and pass the limit inside the integrals in both the denominator and the numerator on the right hand side of \eqref{e.xndet} to get 
\begin{equation}
    \liminf_{n\to\infty}\pp_{x^n}\!\Big(\wcX^{(T)}(T)\in\Omega^{\gamma_n}_N\Big)
    \geq\frac{\int_{\Omega^{\gamma}_N}\tsm\det\!\big[p_T(\bar x_i,y_j)\big]_{i,j=1}^N\ts dy}{\int_{\Omega_N}\tsm\det\!\big[p_T(\bar x_i,y_j)\big]_{i,j=1}^N\ts dy}.\label{e.xndet2}
\end{equation}
But since $\det\!\big[p_T(\bar x_i,y_j)\big]_{i,j=1}^N$ vanishes at $y\in\p\Omega_N$, we can make this ratio be as close to $1$ as we want by choosing a small enough $\gamma$, which contradicts \eqref{e.contraclaim}.
Similarly, if $\bar x\in\p\Omega_N$ then Proposition \ref{p.mbm-bdry} allows us similarly to take $n\to\infty$ to get, in the notation of that result, 
\[\liminf_{n\to\infty}\pp_{x^n}\!\Big(\wcX^{(T)}(T)\in\Omega^{\gamma_n}_N\Big)\geq\mathbf{P}^{\tts T,T}_{\!\bar x}\tsm(\Omega^\gamma_N),\]
which can be made arbitrarily close to $1$ for small enough $\gamma$ in the same way.

Now we turn to the general case when the sequence $x^n$ is not contained in a compact set.
If the gaps between successive coordinates $x^n_i$ are bounded (i.e., if there is an $M>0$ so that $x^n_{i+1}-x^n_i\leq M$ for each $i=1,\dotsc,N-1$ and all $n\geq1$) then, since the drift term $v'$ in \eqref{e.SDE} is periodic, we can simply translate the whole system by $-x^n_1$ to recover a situation where the initial conditions stay in a compact set, for which the previous argument applies.

Now suppose that there is an index $k$ so that $x^n_{k+1}-x^n_{k}\longrightarrow\infty$ as $n\to\infty$.
For notational simplicity we will assume that this is the only gap which is going to infinity, i.e. that the clusters of particles with indices $J_-=(1,\dotsc,k)$ and $J_+=(k+1,\dotsc,N)$ each have gaps which stay bounded as $n\to\infty$; as will be clear, the argument for the general case can be adapted easily from the one we will present.

The basic intuition behind the proof is that, as they get further away from each other (for large $n$), the two clusters should behave roughly independently, and since the gaps between particles in each cluster are bounded, the argument for the compact case should apply.
However, due to the conditioning, we need to implement this idea with some care.

Recall that $\X=(\X_1,\dotsc,\X_N)$ denotes a system of $N$ independent solutions of the SDE \eqref{e.SDE}. We use the definition of $\wcX^{(T)}$ to see that for any $\gamma>0$
\[\pp_{x^n}\!\big(\wcX^{(T)}(T)\in\Omega^{\gamma}_N\big)=\frac{\pp_{x^n}\big(\X(t)\in\Omega_N\;\forall\,t\leq T,\;\X(T)\in\Omega^{\gamma}_N\big)}{\pp_{x^n}\big(\X(t)\in\Omega_N\;\forall t\leq T\big)}.\]
For the numerator we first note that, as before, and by the periodicity of $v'$, we may translate the system so that $x^n_{k}\in[-1/2,1/2)$ for all $n$.
Assuming that condition, there is now a sequence of integers $\ell_n\to\infty$ so that $x^n_{k+1}-\ell_n\in[1/2,3/2)$.
Then, writing $\Omega^\gamma_-=\Omega^{\gamma}_{k}$ and $\Omega^\gamma_+=\Omega^{\gamma}_{N-k}$
\begin{align}
  &\pp_{x^n}\big(\X(t)\in\Omega_N\;\forall\,t\leq T,\;\X(T)\in\Omega^{\gamma}_N\big)
  =\int_{\Omega^{\gamma}_N}\!\det\!\big[p_T(x^n_i,y_j)\big]_{i,j=1}^N\,dy\\
  &\hspace{0.8in}=\int_{\Omega^{\gamma}_-\times\Omega^{\gamma}_+}\!\det\!\big[p_T(x^n_i,y_j)\big]_{i,j=1}^N\ts\uno{y_{k+1}-y_k>\gamma}\,dy\\
  &\hspace{0.8in}=\int_{\Omega^{\gamma}_-\times\Omega^{\gamma}_+}\!\det\!\big[p_T(x^n_i,y_j+\ell_n\uno{j>k})\big]_{i,j=1}^N\ts\uno{y_{k+1}-y_k>\gamma-\ell_n}\,dy\\
  &\hspace{0.8in}=\int_{\Omega^{\gamma}_-\times\Omega^{\gamma}_+}\!\det\!\big[p_T(x^n_i-\ell_n\uno{i>k},y_j+\ell_n(\uno{j>k}-\uno{i>k})\big]_{i,j=1}^N\ts\uno{y_{k+1}-y_k>\gamma-\ell_n}\,dy.
\end{align}
where in the first equality we used the Karlin-McGregor formula \eqref{e.KM} again, 
in the third one we shifted the variables $y_{k+1},\dotsc,y_N$ by $\ell_n$, and in the last one we used the fact that $v'$ has period $1$ so the diffusion is invariant in distribution under integer translations.
To estimate the denominator, we write $y_{J_\pm}=(y_i)_{i\in J^\pm}$ for a vector $y\in\rr^N$, and define the events
\[G^\gamma_{J_\pm}=\big\{\X_{J_\pm}(t)\in\Omega^0_\pm\;\forall\,t\leq T,\;\X_{J_\pm}(T)\in\Omega^\gamma_\pm\big\}.\]
Then 
\[\pp_{x^n}\big(\X(t)\in\Omega_N\;\forall\,t\leq T,\;\X(T)\in\Omega_N\big)\leq\pp_{x^n_{J_-}}\!\tsm\big(G^0_{J_-}\big)\pp_{x^n_{J_+}}\!\tsm\big(G^0_{J_+}\big)=\pp_{x^n_{J_-}}\!\tsm\big(G^0_{J_-}\big)\pp_{x^n_{J_+}\!\!-\ell_n}\!\tsm\big(G^0_{J_+}\big),\]
by independence of the coordinates and the fact that $\Omega_N\subseteq\Omega_{-}\times\Omega_{+}$, together with invariance under integer translations again.
Writing $\tilde{x}^n_i=x^n_i-\ell_n\uno{i>k}$, we deduce from the identity for the numerator and the estimate for the numerator that
\begin{equation}\label{e.XTTGG}
  \pp_{x^n}\tsm\big(\wcX^{(T)}(T)\in\Omega^{\gamma}_N\big)\geq\frac{\int_{\Omega^{\gamma}_{-}\times\Omega^{\gamma}_{+}}\det\!\big[p_T(\tilde x^n_i,y_j+\ell_n(\uno{j>k}-\uno{i>k}))\big]_{i,j=1}^N\ts\uno{y_{k+1}-y_k>\gamma-\ell_n}\,dy}{\pp_{\tilde{x}^n_{J_-}}\!\tsm\big(G^0_{J_-}\big)\pp_{\tilde{x}^n_{J_+}}\!\tsm\big(G^0_{J_+}\big)}.
\end{equation}

Recall now that we are assuming that the gaps between particles within $\tilde{x}^n_{J_-}$ and within $\tilde{x}^n_{J_+}$ remain bounded as $n\to\infty$.
Then, since the rightmost point of $\tilde x^n_{J_-}$ and the leftmost point of $\tilde x^n_{J_+}$ are, respectively, in $[-1/2,1/2)$ and $[1/2,3/2)$, both $\tilde x^n_{J_-}$ and $\tilde{x}^n_{J_+}$ are contained inside compact sets, and thus we can extract a common subsequence out of them, which we still denote by $\tilde{x}^n_{J_\pm}$, and which converges to some limits $\bar x_{J_-}$ and $\bar x_{J_+}$.
If $\bar x_{J_\pm}\in\Omega_\pm$, then taking $n\to\infty$ in the above estimate and proceeding as above (using \eqref{e.friedman-bd} in particular, which implies that if $x_n\longrightarrow x$ and $y_n\longrightarrow\pm\infty$ then $p_T(x_n,y_n)\longrightarrow0$) we get that, for any fixed $\gamma>0$,
\begin{align}
  \liminf_{n\to\infty}\pp_{x^n}\tsm\big(\wcX^{(T)}(T)\in\Omega^{\gamma_n}_N\big)
  &\geq\liminf_{n\to\infty}\pp_{x^n}\tsm\big(\wcX^{(T)}(T)\in\Omega^{\gamma}_N\big)\\
  &=\frac{\int_{\Omega^\gamma_-\times\Omega^\gamma_+}\det\!\big[p_T(\bar x_i,y_j)\ts\uno{(i,j)\in J_-\uptext{ or }(i,j)\in J_+}\big]_{i,j=1}^N\,dy}{\pp_{\bar x_{J_-}}\!\tsm\big(G^0_{J_-}\big)\pp_{\bar{x}_{J_+}}\!\tsm\big(G^0_{J_+}\big)}\label{e.no-compacto-pero-sin-choque}\\
  &=\frac{\int_{\Omega^\gamma_-\times\Omega^\gamma_+}\det\!\big[p_T(\bar x_i,y_j)\big]_{i,j\in J_-}\det\!\big[p_T(\bar x_i,y_j))\big]_{i,j\in J_+}\,dy}{\pp_{\bar x_{J_-}}\!\tsm\big(G^0_{J_-}\big)\pp_{\bar{x}_{J_+}}\!\tsm\big(G^0_{J_+}\big)}\\
  &=\frac{\pp_{\bar x_{J_-}}\big(G^\gamma_{J_-}\big)\pp_{\bar x_{J_+}}\big(G^\gamma_{J_+}\big)}{\pp_{\bar x_{J_-}}\!\tsm\big(G^0_{J_-}\big)\pp_{\bar{x}_{J_+}}\!\tsm\big(G^0_{J_+}\big)}.
\end{align}
The right hand side can be made as close to $1$ as we want again by choosing $\gamma$ to be small, contradicting \eqref{e.contraclaim} as before.

If $\bar{x}_{J_-}\in\partial\Omega_-$ or $\bar{x}_{J_+}\in\partial\Omega_+$, we need to combine the above computation with the argument which we used in the proof of Proposition \ref{p.mbm-bdry}.
For simplicity, and to compare easily with that proof, we will assume that $\bar x_1=\bar x_2$ and $\bar x_k=\bar x_{k+1}$ while all remaining points are distinct; the extension to the general case can be done in the same way as in that proof.
We divide the numerator and the denominator on the right hand side of \eqref{e.XTTGG} by $(x^n_2-x^n_1)(x^n_{k+1}-x^n_k)$ and then take $n\to\infty$ as in the proof of that proposition to get
\begin{equation}
  \liminf_{n\to\infty}\pp_{x^n}\tsm\big(\wcX^{(T)}(T)\in\Omega^{\gamma_n}_N\big)
  \geq\frac{\int_{\Omega^\gamma_-\times\Omega^\gamma_+}\det\!\big[p_{i,T}(\bar x_i,y_j)\uno{(i,j)\in J_-\uptext{ or }(i,j)\in J_+}\big]_{i,j=1}^N\,dy}{\left(\int_{\Omega_-}\det\!\big[p_{i,T}(\bar x_i,y_j)\big]_{i,j=1}^k\,dy\right)\left(\int_{\Omega_+}\det\!\big[p_{i,T}(\bar x_i,y_j)\big]_{i,j=k+1}^N\,dy\right)}
\end{equation}
where, analogously to the notation in \eqref{e.fxn-lim}, $p_{i,T}(x,y)$ denotes either $p_T(x,y)$ for $i\notin\{2,k+2\}$ and $p'_T(x,y)$ for $i=2,k+1$.
The key points in this computation are: (i) Each factor on the denominator of \eqref{e.XTTGG} can be expressed as the integral of a determinant by the Karlin-McGregor formula and the two factors by which we are dividing, $x^n_2-x^n_1$ and $x^n_{k+1}-x^n_k$, can be absorbed into the second row of the corresponding determinants to get the desired limit; (ii) The factors can similarly be absorbed into rows $2$ and $k+2$ of the determinant in the numerator, leading again to the stated limit; (iii) We know already from the proof of Proposition \ref{p.mbm-bdry} that the two factors in the limiting denominator are positive; and (iv) The estimate \eqref{e.friedman-bd} holds for $p_T$ and its derivatives, so the argument which gave vanishing coefficients in \eqref{e.no-compacto-pero-sin-choque} away from the $(J_-,J_-)$ and $(J_+,J_+)$ blocks applies here too.
The determinant in the numerator factors as in the last computation, and then in the notation of Proposition \ref{p.mbm-bdry} we have obtains
\begin{equation}
  \liminf_{n\to\infty}\pp_{x^n}\tsm\big(\wcX^{(T)}(T)\in\Omega^{\gamma_n}_N\big)
  \geq\pp_{\bar x_{J_-}}^{T,T}(\Omega_-^\gamma)\pp_{\bar x_{J_+}}^{T,T}(\Omega_+^\gamma).
\end{equation}
This contradicts \eqref{e.contraclaim} once again because the right hand side can be made arbitrarily close to $1$ if $\gamma$ is small, finishing the proof.
\end{proof}

\begin{rem}\label{r.nonper}
    In the last proof we used the fact that the drift $v'$ is periodic in order to translate the diffusion (several times) without changing its law.
    If $v'$ was not periodic, one can still shift the initial data, as long as the drift $v'$ is shifted accordingly.
    In that case, if the supremum norms of the drift and its first $N$ derivatives are bounded, then in principle the argument could be pushed through if one had suitable control on $p_T$ and its derivatives which is uniform over choices of drifts satisfying the same supremum norm bounds.
    In the case of the estimate \eqref{e.friedman-bd}, this uniformity is not part of the statement of Theorem 6.7 of \cite{friedman}, but seems to follow from its proof (and is presumably known in the literature).
\end{rem}

\section{Construction of myopic non-intersecting random walks and Brownian motions}

\subsection{Construction of the mRW}\label{s.mRW}

In this subsection, we define the non-intersecting myopic random walk $\Y^{(L)}$, prove Theorem \ref{t.interpolation}, and then provide a direct construction of $\Y^{(L)}$ which will play a crucial role in the rest of the paper.
Although in this paper we only need to consider mRWs constructed out of totally asymmetric walks (i.e., Poisson processes), it will be instructive, and not any more complicated, to consider here the case of walks which can jump in both directions.

\subsubsection{Definition as a Markov process}\label{s.mrW_Markov_generator}

For $N\in\nn$ and $p\in[0,1]$, let $\Y= (\Y_1, \Y_2, ...,  \Y_N)$ be a system of independent continuous time random walks jumping to the right at rate $p$ and to the left at rate $1-p$.
Now for fixed $L>0$, introduce the function
\begin{align*}
h_L(y)=\pp_{y}(\Y(t)\in \Omega_N, \forall t\in [0,L]).
\end{align*}
Note that $h_L(y)>0$ for $y\in\Omega_N^\zz$.

We may define the mRW as follows:

\begin{defn}[Myopic non-intersecting random walks] 
Let $N\in \nn$, $L>0$, and $p\in [0,1]$.
The system of \emph{myopic of non-intersecting random walks with foresight $L$} (mRW) is the time-homogeneous Markov chain $\Y^{(L)}=(\Y^{(L)}_1,\Y^{(L)}_2,...,\Y^{(L)}_N)$ taking values in the discrete Weyl chamber $\Omega_N^\zz$, whose generator is given by
\begin{equation}\label{e.generator_mRW}
\cL^{(L)} F(y)=\sum_{i=1}^N\left(p\tts(F(y+e_i)-F(x))\frac{h_L(y+e_i)}{h_L(x)} + (1-p)(F(y-e_i)-F(x))\frac{h_L(y-e_i)}{h_L(x)}\right).
\end{equation}
\end{defn}

One may think about \eqref{e.generator_mRW} is as an analog of a Doob $h$-transform of the generator of the system of independent walks.
Note, however, that $h_L$ is not harmonic for this process.

Intuitively, the infinitesimal evolution of the mRW at time $t$ is that of the independent system $(\Y(t))_{t\geq0}$, but conditioned on non-intersection during the interval $[t,t+L]$.
Note that this time window for non-intersection is shifted dynamically as the process evolves.
The following observation provides one possible way to make sense of this:

\begin{rem}\label{r.checkY}
Another $\Omega_N$-valued process which will be important in this paper is the process $(\wcY^{(L)}(t))_{t\geq 0}$ defined to have the law of the homogeneous Markov chain $\Y$, but conditioned on belonging to $\Omega_N$ up to time $L$. 
This is a time-inhomogeneous Markov process, and one can show that its generator at time $0$ coincides with $\mathcal L^{(L)}F(x)$.
\end{rem}

Now we will prove that this system interpolates between ASEP and a system of non-intersecting random walks.
In the case $p=1$, this corresponds to Theorem \ref{t.interpolation}.

\begin{prop}
    The limits $\Y^{(0^+)}=\lim_{L\searrow0}\Y^{(L)}$ and $\Y^{(\infty)}=\lim_{L\nearrow\infty}\Y^{(L)}$ exist in distribution in the local Skorohod topology.
    Furthermore, $\Y^{(0^+)}$ has the law of ASEP with $N$ particles and jump rate $p$ to the right and $1-p$ to the left, while $\Y^{(\infty)}$ has the law of a system of random walks $\Y= (\Y_1, \Y_2, \dotsc, \Y_N)$ conditioned on never intersecting.
\end{prop}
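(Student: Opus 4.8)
The plan is to analyze the generator $\cL^{(L)}$ directly in the two limits and identify the limiting generators. The key quantity is the ratio $h_L(y\pm e_i)/h_L(y)$ as $L\searrow0$ and as $L\nearrow\infty$, since the rest of the generator is fixed. For the limit $L\searrow0$, I would first observe that for $y\in\Omega_N^\zz$ with $y+e_i\in\Omega_N^\zz$ (i.e.\ the jump keeps us in the open Weyl chamber), the walk started from $y+e_i$ stays in $\Omega_N$ for a short time with probability tending to $1$, so $h_L(y+e_i)\to1$; and $h_L(y)\to1$ as well, so the ratio tends to $1$. On the other hand, if $y+e_i\notin\Omega_N^\zz$, meaning $y_{i+1}=y_i+1$ so that a right jump of particle $i$ would land on particle $i+1$, then from $y+e_i\in\partial\Omega_N^\zz$ the independent system instantaneously has coinciding coordinates, and one computes that $h_L(y+e_i)=\pp_{y+e_i}(\Y(t)\in\Omega_N\ \forall t\in[0,L])\to0$ as $L\searrow0$ — indeed, for two independent rate-$1$ walks started at the same site, the probability of remaining ordered on $[0,L]$ is of order $L$, hence vanishes. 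Since $h_L(y)\to1>0$, the ratio $h_L(y+e_i)/h_L(y)\to0$. Thus in the limit the term for a blocked jump disappears entirely, while all unblocked jump rates converge to $p$ (resp.\ $1-p$ for left jumps, with the symmetric analysis): this is exactly the ASEP generator. To turn this generator convergence into convergence in distribution in the local Skorohod topology, I would invoke a standard generator-convergence criterion for Markov jump processes on a discrete state space (e.g.\ via the martingale problem, or Ethier--Kurtz type results): since the state space is countable and the total jump rates are uniformly bounded, pointwise convergence of the (bounded) generators on a core suffices.

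For the limit $L\nearrow\infty$, I would argue as follows. Set $h_\infty(y)=\pp_y(\Y(t)\in\Omega_N\ \forall t\geq0)$. In the case $p=1$ (Poisson walks) this is positive on $\Omega_N^\zz$ and equals the Vandermonde-type harmonic function up to normalization; in general $p\in(0,1)$ the walks are recurrent in the relevant sense and $h_\infty$ may vanish, but the ratio $h_L(y+e_i)/h_L(y)$ still converges: by the Markov property $h_L(y)=\sum_{z}p^{(1)}(y,z)\,h_{L-1}(z)$ type identities, or more simply $h_L$ is decreasing in $L$ and the conditioned process is the standard Doob $h$-transform in the limit, so $h_L(y+e_i)/h_L(y)\to h_\infty(y+e_i)/h_\infty(y)$ whenever the latter is well defined as a limit of ratios (this is precisely the content of the classical construction of random walks conditioned to never intersect via the $h$-transform, cf.\ \cite{konigOConnellRoch}). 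Hence $\cL^{(L)}\to\cL^{(\infty)}$ pointwise on a core, where $\cL^{(\infty)}$ is the generator of the $h$-transformed (never-intersecting) system. Again a generator-convergence theorem upgrades this to convergence in distribution in the local Skorohod topology. For the specialization to Theorem~\ref{t.interpolation}, setting $p=1$ gives ASEP${}={}$TASEP and $h_\infty=\Delta$, recovering \eqref{e.generator_niRW}.

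The main obstacle I anticipate is the control of the ratios $h_L(y\pm e_i)/h_L(y)$ near the boundary $\partial\Omega_N^\zz$ — specifically the claim that $h_L(y+e_i)\to0$ (and at what rate) when $y+e_i\in\partial\Omega_N^\zz$ in the $L\searrow0$ regime, and the existence of the limiting ratio in the $L\nearrow\infty$ regime when $h_\infty$ itself degenerates for $p\in(0,1)$. The first is handled by an elementary small-time estimate for the exit probability of two independent rate-$1$ walks from the ordered region; the second by appealing to the fact that the $h$-transform construction of conditioned walks produces a well-defined Markov chain on $\Omega_N^\zz$ regardless of whether $h_\infty$ is finite, via the convergence of finite-horizon conditional laws (one can also reduce to $p=1$ by the standard Girsanov-type change of measure relating ASEP to TASEP on finite time windows, which does not affect the $L\to\infty$ limiting dynamics on the Weyl chamber). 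A secondary, more routine point is verifying the hypotheses of the generator-convergence theorem (identification of a core, uniform bound on jump rates), which is straightforward here because all processes live on the countable set $\Omega_N^\zz$ with jump rates bounded by $N$ locally.
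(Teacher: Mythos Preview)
Your treatment of the $L\searrow0$ limit is essentially the same as the paper's, with one unnecessary detour: when $y+e_i\in\partial\Omega_N^\zz$ (two coordinates coincide), you have $\Y(0)=y+e_i\notin\Omega_N$ already at time $0$, so $h_L(y+e_i)=0$ exactly for every $L>0$; there is no need for a small-time estimate. With that simplification, your argument and the paper's agree.

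The $L\nearrow\infty$ limit, however, has a genuine gap. Your claim that for $p=1$ the survival probability $h_\infty(y)=\pp_y(\Y(t)\in\Omega_N\ \forall t\geq0)$ is positive is false: for two independent rate-$1$ Poisson processes, the gap $\Y_2-\Y_1$ is a symmetric continuous-time simple random walk on $\zz$, which is recurrent, so $h_\infty\equiv0$ for every $p\in[0,1]$ and every $N\geq2$. Consequently your proposed identity $h_L(y+e_i)/h_L(y)\to h_\infty(y+e_i)/h_\infty(y)$ is a $0/0$ form and carries no information; the rest of that paragraph (monotonicity of $h_L$, the Girsanov-type reduction to $p=1$) does not rescue it. What is actually needed is a \emph{sharp} asymptotic for $h_L$ that identifies both the decay rate and the prefactor. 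The paper invokes exactly such a result, namely Theorem~1.1 of Eichelsbacher--K\"onig, which gives
\[
L^{\frac14 N(N-1)}\,h_L(y)\xrightarrow[L\to\infty]{}\Delta(y),
\]
with $\Delta$ the Vandermonde determinant. From this the ratio $h_L(y\pm e_i)/h_L(y)\to\Delta(y\pm e_i)/\Delta(y)$ is immediate, and one recovers the generator \eqref{e.generator_niRW}. Without this (or an equivalent) input, your argument for the $L\nearrow\infty$ case does not close.
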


\begin{proof}
We are dealing with non-explosive continuous time Markov chains on a countable state space, so it is enough to check the convergence of the generators.
More precisely, we need to show that if $F\!:\Omega_N^\zz\longrightarrow\rr$ is bounded, then $\cL^{(L)}F$ converges pointwise to $\bar\cL F$ in each of the two limits in $L$, with $\bar\cL$ the generator of the limiting process (e.g. by Lemma \ref{l.increments_enough_Skorohod} below).

For the first one simply note that, for $x\in\Omega_N^\zz$, 
$\frac{h_L(x\pm e_i)}{h_L(x)}\longrightarrow\uno{x\pm e_i\in\Omega_N^\zz}$ as $L\searrow0$, so
$\cL^{(L)}F(x)$ converges to
\[\sum_{i=1}^N\left(p\tts(F(x+e_i)-F(x))\uno{x+e_i\in\Omega_N^\zz} + (1-p)(F(x-e_i)-F(x))\uno{x-e_i\in\Omega_N^\zz}\right),\]
which is the generator of ASEP with $N$ particles.

For the second case we first invoke \cite[Thm. 1.1]{eichelsbacherKonig}, which gives $L^{\frac{1}{4}(N-1)N}h_L(x)\longrightarrow\Delta(x)$ as $L\to\infty$ (with $\Delta(x)$ the Vandermonde determinant).
From this we get
\[\frac{h_L(x\pm e_i)}{h_L(x)}\xrightarrow[L\to\infty]{}\frac{\Delta(x\pm e_i)}{\Delta(x)},\]
which gives convergence to the generator \eqref{e.generator_niRW}.
\end{proof}

\subsubsection{A direct construction of the mRW}\label{s.direct_mRW}

Next we present a construction of the mRW that will later allow us to couple it with the mBM.
The construction is based on a sort of acceptance-rejection algorithm.

The basic idea is the following.
Fix a foresight parameter $L>0$ and an initial condition $y\in\Omega_N^\zz$, and consider a process $\wcY^{(L)}$ which has the law of the system of independent walks $\Y$ started at $y$, conditioned on the event that $\Y_t\in\Omega_N^\zz$ for all $t\in [0,L]$ (as in Remark \ref{r.checkY}).
Now let $\tau>L$ be the first collision time between the walks (i.e., the exit time of $\wcY^{(L)}$ from $\Omega_N^\zz$) and note that for any $t\in[0,\tau-L)$, the trajectory of $\wcY^{(L)}$ starting at time $t$ remains inside $\Omega_N^\zz$ for the next $L$ units of time.
Then we keep that piece of the trajectory (i.e. $\wcY^{(L)}\big|_{[0,\tau-L]}$), discard the rest, and repeat the same construction starting where (and when) the previous piece of trajectory left off.

More precisely, we propose the following algorithm to construct an mRW $\Y^{(L)}$ with foresight $L$ and initial condition $y\in\Omega_N^\zz$ out of independent copies of $\wcY^{(L)}$:

\begin{algo}\label{a.A}
\leavevmode
\begin{enumerate}
    \item Let $t_0=0$ and set $\Y^{(L)}(t_0)=y$.
    \item For $n\geq0$, assume that we have defined $\Y^{(L)}$ up to time $t_n$. 
    Sample a copy $\wcY^{(L,n+1)}$ of $\wcY^{(L)}$ with initial condition $\Y^{(L)}(t_n)$, which is otherwise independent of the previous copies $\wcY^{(L,1)},\dotsc,\wcY^{(L,n)}$.
    Define $\tau_{n+1}$ to be first collision time of $\wcY^{(L,n+1)}$,
    \[\tau_{n+1}=\inf\{t>0: \wcY^{(L,n+1)}\notin\Omega_N^\zz\}.\]   
    Then let $t_{n+1} = t_{n}+\tau_{n+1}-L>t_n$ and set
    \[\Y^{(L)}\big|_{[t_n,t_{n+1}]}=\wcY^{(L,n+1)}\big|_{[0,t_{n+1}-t_n]}.\]
\end{enumerate}
\end{algo}

See Figure \ref{f.construction}, or \cite{simulation} for a simulation.

It is, of course, not obvious at all that the process constructed by this algorithm is  Markovian, let alone that it is time-homogeneous.

\begin{figure}
    \centering
    \includegraphics[width=0.6\linewidth]{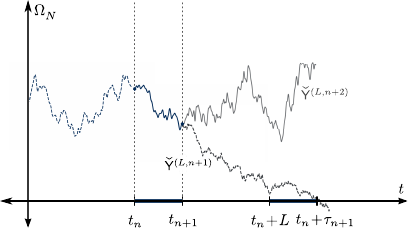}
    \caption{The construction of the mRW through Algorithm \ref{a.A}. 
    A single path (represented here as a continuous trajectory) is employed to represent the $N$-dimensional trajectories of the system, with the horizontal axis standing for the boundary of $\Omega_N$.
    Starting at time $t_n$, we sample a system $\wcY^{(L,n+1)}$ conditioned on non-intersection on the interval $[t_n,t_n+L]$, and then keep only the initial part of its trajectory between times $t_n$ and $t_{n+1}=t_n+\tau_{n+1}-L$ (whose length is represented by the two blue segments).
    At time $t_{n+1}$, the construction is restarted using an independent copy $\wcY^{(L,n+2)}$ of the system conditioned to not intersect for time $L$.
    The same sketch can be used to represent the construction of an mBM through Algorithm \ref{a.B}.}
    \label{f.construction}
\end{figure}

\begin{prop}\label{p.construccion_discreta}
The process $\Y^{(L)}$ constructed by Algorithm \ref{a.A} defines a process $\Y^{(L)}_t$ for all $t\geq0$, which has the law of an mRW.
\end{prop}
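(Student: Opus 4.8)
The plan is to establish three things in sequence: (i) the algorithm produces a well-defined process on all of $[0,\infty)$, i.e. $t_n\to\infty$ almost surely; (ii) the resulting process $\Y^{(L)}$ is a time-homogeneous Markov chain; and (iii) its generator is $\cL^{(L)}$ from \eqref{e.generator_mRW}. The first point follows from a lower bound on the increments $t_{n+1}-t_n=\tau_{n+1}-L$: since $\wcY^{(L,n+1)}$ is (by definition) conditioned to stay in $\Omega_N^\zz$ on $[0,L]$, we have $\tau_{n+1}>L$ surely, so $t_{n+1}-t_n>0$; to rule out accumulation one needs that these increments do not shrink too fast. The cleanest route is to note that, conditionally on staying in $\Omega_N^\zz$ up to time $L$, there is a uniform-in-starting-point probability bounded below that $\tau_{n+1}-L$ exceeds some fixed $\eta>0$ (because from any configuration in $\Omega_N^\zz$ one can wait a fixed positive time with no jumps at all, and the walks are rate-$O(1)$); hence by Borel--Cantelli the increments exceed $\eta$ infinitely often and $t_n\to\infty$. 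Actually since $\wcY^{(L)}$ restricted to $[0,L]$ has a law absolutely continuous with respect to the unconditioned walk, and a positive fraction of trajectories have no jump on a small interval, one gets a uniform lower bound on $\pp(\tau_{n+1}>L+\eta)$; this handles (i).

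For the Markov property, the key structural observation is that the law of $\wcY^{(L)}$ started at $y$, \emph{restricted to the interval $[0,\tau-L]$ where $\tau$ is its first exit time from $\Omega_N^\zz$}, depends only on $y$; and the construction glues independent such pieces. I would argue that for any $s\geq 0$, conditioning on $\{t_n\leq s<t_{n+1}\}$ and on $\Y^{(L)}\big|_{[0,s]}$, the future $\Y^{(L)}\big|_{[s,\infty)}$ has the law produced by the same algorithm started from $\Y^{(L)}(s)$. This requires showing that the ``remaining piece'' $\wcY^{(L,n+1)}\big|_{[s-t_n,\,t_{n+1}-t_n]}$, given the past of that copy up to time $s-t_n$ and given that $s-t_n<\tau_{n+1}-L$, has the law of a fresh $\wcY^{(L)}$ started at $\wcY^{(L,n+1)}(s-t_n)=\Y^{(L)}(s)$, run until its own first-exit-minus-$L$ time. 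This is precisely the content of Remark \ref{r.checkY} combined with the following self-similarity: if $W$ is $\Y$ conditioned to stay in $\Omega_N^\zz$ on $[0,L]$ and $\tau$ is its exit time, then for $r<\tau-L$, conditionally on $W\big|_{[0,r]}$ the shifted process $(W(r+\cdot))$ has the law of $\Y$ conditioned to stay in $\Omega_N^\zz$ on $[0,L]$ started at $W(r)$ — because the event $\{r<\tau-L\}\cap\{\text{no exit on }[r,r+L]\}$ is exactly ``stays in $\Omega_N^\zz$ up to time $r+L$'', and by the Markov property of $\Y$ the conditional law of the increment after time $r$ given staying in $\Omega_N^\zz$ up to $r+L$ is that of $\Y(W(r)+\cdot)$ conditioned to stay in $\Omega_N^\zz$ for time $L$. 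One must be a little careful since we further condition on the \emph{full} future exit time, not just on non-intersection up to $r+L$; but $\tau-L-r$ only being positive is what we use to define the retained piece, and the retained piece of the shifted process is again exactly the piece of a fresh $\wcY^{(L)}$ up to \emph{its} exit-minus-$L$ time, so the descriptions match. Time-homogeneity is then immediate because the only input at each step is the current position and an independent fresh copy with the same law (for a fixed starting point).

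Finally, for the generator I would compute $\lim_{t\downarrow 0}\tfrac1t\bigl(\ee_y[F(\Y^{(L)}(t))]-F(y)\bigr)$ for bounded $F$ on $\Omega_N^\zz$. For small $t$, with overwhelming probability we are still in the first piece ($t<t_1=\tau_1-L$), and on that event $\Y^{(L)}(t)=\wcY^{(L,1)}(t)$, so $\ee_y[F(\Y^{(L)}(t))]=\ee_y[F(\wcY^{(L)}(t))]+o(t)$, the correction coming from the event $\{\tau_1<L+t\}$ which has probability $o(t)$ (again because $\wcY^{(L)}$ must stay in $\Omega_N^\zz$ up to time $L$ and then exit within an additional time $t$, and one shows this is $O(t^2)$, or at least $o(t)$, using the absolute continuity / continuity of the exit-time density of $\wcY^{(L)}$ past time $L$ — this is where Remark \ref{r.checkY} is used to identify $\wcY^{(L)}$'s generator at time $0$ with $\cL^{(L)}$). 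The generator of the time-inhomogeneous process $\wcY^{(L)}$ at time $0$ is, by the $h$-transform-type computation, precisely $\cL^{(L)}F(y)=\sum_i\bigl(p(F(y+e_i)-F(y))\tfrac{h_L(y+e_i)}{h_L(y)}+(1-p)(F(y-e_i)-F(y))\tfrac{h_L(y-e_i)}{h_L(y)}\bigr)$, since conditioning the rate-$p$/$(1-p)$ walk on $\{\Y(s)\in\Omega_N^\zz\ \forall s\in[0,L]\}$ reweights the jump to $y\pm e_i$ by $h_L(y\pm e_i)/h_L(y)$ at the infinitesimal level. This identifies the generator of $\Y^{(L)}$ as $\cL^{(L)}$, and since both processes are non-explosive continuous-time chains on the countable set $\Omega_N^\zz$ (non-explosion being exactly point (i)), equality of generators gives equality in law.

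The main obstacle I anticipate is step (ii): making rigorous the self-similarity of the retained portion of $\wcY^{(L)}$, i.e. that cutting at the exit-minus-$L$ time and restarting is consistent with the Markov property of $\Y$ despite the conditioning reaching forward by $L$ units. The cleanest way to do this is probably to describe the law of the full infinite concatenation directly as a single measure — built from $\Y$ together with an independent ``resampling'' structure — and then verify it satisfies the Markov property by checking finite-dimensional conditionals, rather than trying to patch piece by piece; the forward-looking conditioning is exactly balanced by the fact that each new piece's retained length is again governed by its own exit time, so the bookkeeping closes.
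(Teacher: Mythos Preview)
Your proposal is correct and follows essentially the same three-step approach as the paper: non-accumulation of $t_n$, the Markov property via the ``self-similarity'' of $\wcY^{(L)}$ under the shift by $r<\tau-L$, and identification of the generator at time $0$ with $\cL^{(L)}$. The only notable difference is in step (i): the paper observes that $t_{n+1}-t_n\geq\xi_n$ where $\xi_n$ is the waiting time for the \emph{first} jump of $\wcY^{(L,n+1)}$ after time $L$; since after time $L$ the process is unconditioned, $\xi_n$ is exactly $\mathrm{Exp}(N)$ regardless of the configuration at time $L$, and the $\xi_n$ are genuinely i.i.d., giving $\sum\xi_n=\infty$ immediately. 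This is slightly cleaner than your uniform-lower-bound plus (conditional) Borel--Cantelli route, but both work. Your concern about step (ii) is unwarranted: the paper handles it by enlarging the filtration to include the times $t_n\uno{t_n\leq t}$, so that the only extra information beyond the trajectory is the event $\{t<t_{n+1}\}=\{\tau_{n+1}>t-t_n+L\}$, which is exactly the conditioning needed to identify the future with a fresh $\wcY^{(L)}$; no global ``single measure'' description is required.
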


\begin{proof}
    We first need to check that $t_n\nearrow \infty$ almost surely.
    But this follows simply from the fact that $t_{n+1}-t_n\geq \xi_n$, where $\xi_n$ is the first jump after $L$ in the trajectory of $(\wcY^{(L,n+1)})_{t\geq0}$. Note that $(\xi_n)_{n\in \rr}$ are i.i.d. exponential random variables with parameter $N$.

    Now we prove that the process $\Y^{(L)}$ is a time-homogeneous Markov process (throughout this proof we  use $\Y^{(L)}$ to denote the process constructed through Algorithm \ref{a.A}).
    To do this we will show that the law of $\Y^{(L)}\big|_{[t,\infty)}$ given $\mathcal F_t\coloneqq\sigma(\Y^{(L)}(s)\!:s\leq t)\vee \sigma(t_n\mathbf 1_{t_n\leq t}\!:n\in \rr)$ coincides with the law of an independent copy of $\Y^{(L)}$ started at the configuration of the process at time $t$.
    This is enough, because the filtration $(\mathcal F_s)_{s\geq0}$ is larger than the natural filtration associated to $\Y^{(L)}$. 

    For fixed $t>0$, we can construct $\Y^{(L)}\big|_{[t,\infty]}$ conditionally on $\mathcal F_t$ as follows.
    Given $\mathcal F_t$, we can choose the value of $n$ so that $t_n\leq t<t_{n+1}$.
    Note that between times $t_n$ and $t_{n+1}$ we are tracing the trajectory of a process $\Z$ which has the law of $\wcY^{(L)}$ starting at $\Y^{(L)}(t_n)$.
    When we condition on $\mathcal F_t$, the only information we get in addition to the restriction of $\Z$ to $[0,t-t_n]$ is the fact that $t<t_{n+1}$ or, what is the same, $\tau_{n+1}>L+t-t_n$, which says that $\Z$ has not exited the Weyl chamber up to time $t-t_n+L$.
    As a consequence, under this conditioning the law of $\Z\big|_{[t-t_n,t-t_n+L]}$ is that of $\wcY^{(L)}\big|_{[0,L]}$ for an independent copy of $\wcY^{(L)}$ starting at $\Y^{(L)}(t)$.
    By construction of Algorithm \ref{a.A}, this implies that:
    \begin{enumerate}[label=(\roman*),itemsep=2pt]
        \item The law of $\Y^{(L)}\big|_{[t,t_{n+1}]}$ (or, what is the same, that of $\Z\big|_{[t-t_n,t_{n+1}-t_n]}$) is that of $\wcY^{(L)}\big|_{[0,t_{n+1}-t_n]}$ for an independent copy of $\wcY^{(L)}$ starting at $\Y^{(Y)}(t)$, and
        \item The law of $\Y^{(L)}\big|_{[t_{n+1},\infty)}$ is that of the whole trajectory of a copy of $\Y^{(L)}$ starting at $\Y^{(L)}(t_{n+1})$.
    \end{enumerate}
    This proves that $\Y^{(L)}$ is a time-homogeneous Markov process.

    To finish the proof it is enough to compute the generator of $\Y^{(L)}$.
       For bounded $F\!:\Omega_N^\zz\longrightarrow\rr$ we have 
    \begin{multline}
    \label{e.gen}
        \ee_y\!\left[F(\Y^{(L)}(h))-F(y)\right ]
        =  \ee_y\!\left[\big(F(\Y^{(L)}(h))-F(y)\big) \mathbf 1_{\Y^{(L)}(h)\neq \Y^{(L)}(0),\,t_{1}\leq h } \right ]\\ 
        + \ee_y\!\left[\big(F(\Y^{(L)}(h))-F(y)\big) \mathbf 1_{\Y^{(L)}(h)\neq\Y^{(L)}(0),\,t_{1}>h}\right].
    \end{multline}
    The first term on the right hand side is negligible because $\big\{\Y^{(L)}(h)\neq\Y^{(L)}(0),\,t_{1}\leq h\big\}$ is contained in the event 
    \begin{multline*}
    E_h=\big\{\wcY^{(L,1)} \text{ jumps both in $[0,h]$ and in $[L,L+h]$} \big\}\\
     \cup \big\{\wcY^{(L,1)} \text{ jumps in $[L,L+h]$ and $\wcY^{(L,2)}$ jumps in $[0,h]$ or $[L,L+h]$}\big\}, 
    \end{multline*}
    which  has probability of order $h^2$. Thus,
    \begin{equation}
    \frac{1}{h}\left|\ee_y\!\left[\big(F(\Y^{(L)}(h))-F(\Y^{(L)}(0))\big) \mathbf 1_{\Y^{(L)}(h)\neq \Y^{(L)}(0), t_{1}\leq h }\right]\right|
    \leq \frac{2\|F\|_\infty}{h }\ts\pp\big(E_{h}\big)\xrightarrow[h\to0]{}0.    
    \end{equation}
    
    For the other term, we have 
    \begin{align*}
        \frac{1}{h}\ee\!\left[\big(F(\Y^{(L)}(h))-F(y)\big) \mathbf 1_{\Y^{(L)}({h})\neq \Y^{(L)}(0), t_{1}>h} \right ]&=\frac{1}{h}\ee\!\left[\big(F(\wcY^{(L,1)}(h))-F(y)\big) \mathbf 1_{t_{1}>h}\right ]\\
        &=\frac{1}{h}\ee\!\left[F(\wcY^{(L,1)}(h))-F(y)\right ]+o(1),
    \end{align*}
    where the $o(1)$ estimate comes from basically the same argument as for the previous term. Therefore, as $h\to0$ the last expression converges to
    \begin{equation}
      \sum_{i=1}^N\Big(p\big(F(y+e_i)-F(y\big)\tfrac{h_{L}(y+e_i)}{h_{L}(y)}
      +(1-p)\big(F(y-e_i)-F(y\big) \tfrac{h_{L}(y-e_i)}{h_{L}(\mathsf{z}_0)}\Big)=\cL^{(L)}F(y).
    \end{equation}
      
    Putting the two limits together,  dividing \eqref{e.gen} by $h$, and taking $h\to0$, we get
    \[\frac{\d}{\d t}\ee_y\tsm\big[F(\Y^{(L)})(t)\big]\Big|_{t=0}=\cL^{(L)}F(y),\]
    so the process $\Y^{(L)}$ constructed by the algorithm has the desired generator.
    This concludes the proof.
    \end{proof}

\subsection{Construction of the mBM}\label{s.mBM}

Now, we turn to the construction of the system of myopic non-intersecting Brownian motions in a periodic potential.
In all that follows, we assume that the potential $v$ satisfies the assumptions prescribed in Section \ref{s.mbm-intro}.

One approach to defining the system is to prescribe its generator, as described in Section \ref{s.myopic-intro} and analogously to what we did for the mRW in Section \ref{s.mrW_Markov_generator}.
This is certainly possible, but slightly more challenging technically now because we are working with diffusions.
Since we will have no use for this approach, we do not include the details.

The alternative is to construct the mBM directly from a system $\wcX^{(T)}$ of copies of the  diffusion \eqref{e.SDE} conditioned on not intersecting in a fixed interval $[0,T]$.
We will provide two different such constructions.
The first one, which will serve as our definition of the mBM, proceeds by concatening copies of $\wcX^{(T)}$ run for time $\ep$ and then sending $\ep\to0$.
The second one is a version of the acceptance-rejection algorithm (Algorithm \ref{a.A}) which we introduced for the mRW.
The advantage of this last approach is that it will allow us to easily couple mBMs and mRWs, which is the main tool we will use to prove Theorem \ref{t.main}.

We proceed now with the two constructions.

\subsubsection{Glueing}\label{s.glueing}

Suppose (slightly abusing notation) that $\X=(\X_1,\dotsc,\X_N)$ is a system of $N$ independent solutions of the SDE \eqref{e.SDE}, and assume that the initial condition $\X(0)=(\X_1(0),\dotsc,\X_N(0))$ lives in the Weyl chamber
\[\Omega_N=\big\{x\in\rr^N\!:x_1<\dotsm<x_N\big\}.\]
For a fixed foresight $T>0$, define $\wcX^{(T)}$ to be the process taking values in $\rr^N$ which has the law of $\X$ conditioned on the event that the $\X(t)\in \Omega_N$ for any $t\in [0,T]$. Note that $\wcX^{(T)}$ is a time inhomogeneous Markov process and that, straightforwardly, at any time $0\leq t\leq T$ we have that almost surely $\wcX(t)\in\Omega_N$.

For $\epsilon>0$, we define  $ \X^{(T,\eps)}$, the $\epsilon$ approximation of the mBM process, as follows:
\begin{itemize}
    \item  $\X^{(T,\eps)}\big|_{[0,\eps]}$ is equal in law to $\wcX^{(T)} \big|_{[0,\eps]}$.
    \item For any $n\geq1$, and conditionally on $(\X_\eps^{(T)}(n\eps))= y$, the law of $\X_\eps(\cdot+n_\eps)\big|_{[0,\eps]}$ is equal to the law of $\wcX^{(T)}\big|_{[0,\eps]}$ with initial condition $y$, and is (conditionally) independent of $\X^{(T,\eps)}\big|_{[0,n\eps)}$.
\end{itemize}
The following result states that this law has a distributional limit as $\eps \to 0$.

\begin{prop} \label{p.existence_mBM}
For any foresight $T>0$ we have that $\X^{(T,\eps)}$ converges in law to a continuous, time-homogeneous Markov process $\X^{(T)}$ in the topology of uniform convergence in compacts.
\end{prop}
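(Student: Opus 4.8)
The plan is to establish tightness of the family $\{\X^{(T,\eps)}\}_{\eps>0}$ in the space of continuous paths and then to identify all subsequential limits uniquely via their finite-dimensional distributions, which forces convergence of the whole family. The gluing construction makes $\X^{(T,\eps)}$ a (time-inhomogeneous-on-each-block, but) piecewise-Markov process whose transition law over each block $[n\eps,(n+1)\eps]$ is that of $\wcX^{(T)}$ run for time $\eps$; the key structural input is Proposition \ref{p.uniform_separation}, which tells us that after one foresight-window of time the conditioned process sits in $\Omega_N^\gamma$ with probability $\geq 1-\delta$, \emph{uniformly in the initial condition}. This uniformity is what will let us control the process over arbitrarily long time horizons despite the conditioning being reset every $\eps$.

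First I would prove tightness. Each coordinate of $\X^{(T,\eps)}$ is, on each block, a solution of the SDE \eqref{e.SDE} conditioned on an event of positive probability; since the drift $-\kappa v'$ is bounded, one gets moment bounds on increments of the form $\ee\big[|\X^{(T,\eps)}(t)-\X^{(T,\eps)}(s)|^4\big]\leq C|t-s|^2$ on time scales below $\eps$, and these can be chained across blocks using the Markov structure, with constants uniform in $\eps$ because of the boundedness of the drift and a Girsanov/Radon-Nikodym comparison of the conditioned law to the unconditioned one over a single block (the conditioning over one block of length $\eps$ has a Radon-Nikodym derivative bounded by $1/\pp_x(\text{non-intersection on }[0,T])$, which is \emph{not} uniform in $x\in\partial\Omega_N$, so the chaining must be done carefully — see the obstacle below). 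Kolmogorov's continuity criterion then gives tightness in $C([0,M],\rr^N)$ for every $M$, hence in $C([0,\infty),\rr^N)$ with the topology of uniform convergence on compacts. One also checks that any subsequential limit takes values in $\overline{\Omega_N}$ (a closed condition passing to the limit) and in fact, using Proposition \ref{p.uniform_separation} together with the strong Markov property, spends zero time on $\partial\Omega_N$, so it is supported on paths in $\Omega_N$.

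Next I would identify the limit. Fix times $0<s_1<\dots<s_m$ and a bounded continuous $F$; I want to show $\ee\big[F(\X^{(T,\eps)}(s_1),\dots,\X^{(T,\eps)}(s_m))\big]$ converges. The natural candidate generator is $\cL_0$, the time-$0$ generator of $\wcX^{(T)}$ (as in Remark's discussion in Section \ref{s.myopic-intro}); concretely, for a subsequential limit $\X^{(T)}$, one shows that for smooth cylindrical test functions $G$,
\[
\ee\big[G(\X^{(T)}(t))\big]-\ee\big[G(\X^{(T)}(s))\big]=\int_s^t\ee\big[\cL_0 G(\X^{(T)}(r))\big]\,dr,
\]
i.e. $\X^{(T)}$ solves the martingale problem for $\cL_0$. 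This is obtained by writing the analogous identity for $\X^{(T,\eps)}$ — which telescopes over the blocks $[n\eps,(n+1)\eps]$, each contributing the generator of $\wcX^{(T)}$ \emph{at time $0$} since every block restarts the conditioning window — and passing $\eps\to0$ using tightness plus the uniform separation estimate (to rule out boundary pathologies in the limit). Well-posedness of this martingale problem (uniqueness) follows because, away from $\partial\Omega_N$, $\cL_0$ is a non-degenerate elliptic operator with smooth coefficients, and the uniform separation estimate guarantees that the process never spends positive time at the boundary, so uniqueness in the interior suffices. Uniqueness of the martingale-problem solution pins down all finite-dimensional distributions, giving convergence of the full family and time-homogeneity of the limit (the limiting transition semigroup depends only on elapsed time because each block's law depends only on $\eps$ and the current position). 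Continuity of $\X^{(T)}$ is immediate since it is a limit in $C([0,\infty),\rr^N)$.

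The main obstacle is the non-uniformity, over $x\in\overline{\Omega_N}$, of the Radon-Nikodym derivative of the one-block conditioned law with respect to the unconditioned SDE: when the starting configuration is near $\partial\Omega_N$, the normalizing constant $\pp_x(\text{non-intersection on }[0,\eps])$ (or on $[0,T]$) degenerates, so naive block-by-block comparison estimates blow up. The way around this is to not start blocks near the boundary: Proposition \ref{p.uniform_separation} ensures that after the \emph{first} block of length $T$ (really $\eps$, but iterated), the process is in $\Omega_N^\gamma$ with high probability uniformly in the start, and on the complementary event one still has the crude bound $\|F\|_\infty$; combining a high-probability good event (where all the moment/increment estimates are uniform because the process stays a definite distance from the boundary) with a small bad event handled by brute force is the standard device, but making the bookkeeping work across infinitely many blocks while keeping constants $\eps$-uniform is the delicate point and is where Proposition \ref{p.uniform_separation} — rather than Proposition \ref{p.mbm-bdry} alone — is essential. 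A secondary, more routine obstacle is verifying that the limiting process genuinely spends zero time on $\partial\Omega_N$ (needed both for support and for martingale-problem uniqueness), which again follows by an occupation-time estimate built from Proposition \ref{p.uniform_separation} and the strong Markov property.
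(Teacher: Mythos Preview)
Your approach is genuinely different from the paper's, which avoids tightness and the martingale problem entirely. The paper first \emph{constructs} the candidate limit $\X^{(T)}$ directly via an acceptance-rejection scheme (Algorithm~\ref{a.B}): sample $\wcX^{(T)}$, keep only the portion up to time $\tau-T$ where $\tau$ is the first exit from $\Omega_N$, and restart. It then recasts the gluing process $\X^{(T,\eps)}$ as a variant of the same algorithm with the restart times rounded up to multiples of $\eps$ (Algorithm~\ref{a.C}), and proves convergence by \emph{coupling} the two algorithms step by step. The key input is a coupling lemma (Claim~\ref{c.epsiloncoupling}): two copies of $\wcX^{(T)}$ started at nearby points in $\Omega_N^\gamma$ can be made to coincide after a short time with high probability. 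This sidesteps all analytic difficulties with the singular boundary behavior.

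Your route has two gaps that are not resolved as written. First, Proposition~\ref{p.uniform_separation} gives uniform separation at time $T$, whereas your blocks have length $\eps$: to control the Radon--Nikodym derivative at block endpoints you need $\wcX^{(T)}(\eps)\in\Omega_N^\gamma$ with high probability uniformly in the start, which is a different statement (plausibly true, but requiring its own proof; ``iterating'' over $T/\eps$ blocks does not recover it, since the conditioning window is reset at each block and $\X^{(T,\eps)}(T)$ is not $\wcX^{(T)}(T)$). Second, and more seriously, martingale-problem uniqueness is not available from standard elliptic theory: the generator $\cL_0$ has the form $\cL+\nabla\log h_T\cdot\nabla$ with $h_T(x)=\pp_x(\text{non-intersection on }[0,T])$, and $\nabla\log h_T$ blows up like a Bessel drift near $\partial\Omega_N$. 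Well-posedness for such singular drifts is a nontrivial result in its own right, and your proposed reduction to ``uniqueness in the interior'' via the claim that the limit spends no time on $\partial\Omega_N$ is circular --- you would need that non-contact statement for an \emph{a priori} unknown subsequential limit, not for $\wcX^{(T)}$. The paper's coupling approach never needs to characterize the limit analytically and so avoids both issues.
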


\begin{defn}[Myopic non-intersecting Brownian motions in a potential] \label{d.mBM}
Let $N\in \nn$ and $T>0$.
The \emph{myopic system of non-intersecting Brownian motions in a potential $v$ with foresight $T$} (mBM) with initial condition $x\in\Omega_N$ is the $\Omega_N$-valued Markov process distributed according to the limiting law provided in Proposition \ref{p.existence_mBM}.
\end{defn}

The proof of Proposition \ref{p.existence_mBM} is based on the second construction of the mBM and a coupling argument.
So we turn to that construction before proving the result.

\subsubsection{Direct construction of the mBM}\label{sec:direc-mBM}

The construction which we give now is analogous to the one given for the mRW in Section \ref{s.direct_mRW}.
In fact, the algorithm to construct it is the same as for the mRW, but for clarity we include it explicitly:

\newpage

\begin{algo}\label{a.B} 
    \leavevmode
    \begin{enumerate}
    \item Let $t_0=0$ and set $\X^{(T)}(t_0)=x$.
    \item For $n\geq0$, assume that we have defined $\X^{(T)}$ up to time $t_n$. 
    Sample a copy $\wcX^{(T,n+1)}$ of $\wcX^{(T)}$ with initial condition $\X^{(T)}(t_n)$, which is otherwise independent of the previous copies $\wcX^{(T,1)},\dotsc,\wcX^{(T,n)}$.
    Define $\tau_{n+1}$ to be first collision time of $\wcX^{(T,n+1)}$,
    \[\tau_{n+1}=\inf\{t>0: \wcX^{(T,n+1)}(t)\notin\Omega_N\}.\]   
    Then let $t_{n+1} = t_n+\tau_{n+1}-T>t_n$ and set
    \[\X^{(T)}\big|_{[t_n,t_{n+1}]}=\wcX^{(T,n+1)}\big|_{[0,t_{n+1}-t_n]}.\]
    \end{enumerate}
\end{algo}

See again Figure \ref{f.construction}.

\begin{prop}\label{p.construccion_continua}
    The process $\X^{(T)}$ defined through Algorithm \ref{a.B} is almost surely well defined for all times $t\geq0$, and $(\X^{(T)}(t))_{t\geq0}$ is a time-homogeneous Markov process.
\end{prop}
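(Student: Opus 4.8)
The plan is to mirror the proof of Proposition \ref{p.construccion_discreta} (the mRW case), adapting the three parts of that argument — that the breakpoint times diverge, that the resulting process is time-homogeneous Markov, and (here, in place of computing a generator) that the process is continuous and that the Markov property holds with respect to the natural filtration — to the diffusive setting. First I would establish that $t_n\nearrow\infty$ almost surely. Unlike the mRW case, the increments $t_{n+1}-t_n=\tau_{n+1}-T$ are not bounded below by i.i.d.\ exponentials, but they are still i.i.d.\ strictly positive random variables: conditionally on $\X^{(T)}(t_n)$, the value $\tau_{n+1}-T$ is the excess of the first exit time of $\wcX^{(T,n+1)}$ from $\Omega_N$ over $T$, and this has a law that is bounded below in a suitable stochastic sense uniformly in the starting point. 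Here I would invoke Proposition \ref{p.uniform_separation}: at time $T$ the conditioned system is at distance at least $\gamma$ from $\partial\Omega_N$ with probability at least $1-\delta$, uniformly over $x\in\Omega_N$, and from such a configuration an independent (unconditioned) system of diffusions \eqref{e.SDE} needs a positive amount of time to travel distance $\gamma/2$; combining these gives a uniform lower bound on $\pp(\tau_{n+1}-T>\eta)$ for some $\eta>0$, which by Borel--Cantelli (second lemma, using independence across $n$) forces $\sum_n(t_{n+1}-t_n)=\infty$ a.s. Continuity of $\X^{(T)}$ on each $[t_n,t_{n+1}]$ is inherited from continuity of $\wcX^{(T)}$, and the paths match at the breakpoints by construction, so $\X^{(T)}$ is continuous on $[0,\infty)$.

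The core of the argument is the Markov property, and here I would copy the structure of the proof of Proposition \ref{p.construccion_discreta} essentially verbatim. Fix $t>0$ and enlarge the natural filtration to $\mathcal F_t=\sigma(\X^{(T)}(s):s\leq t)\vee\sigma(t_n\mathbf 1_{t_n\leq t}:n\geq0)$; it suffices to show that, conditionally on $\mathcal F_t$, the law of $\X^{(T)}\big|_{[t,\infty)}$ is that of an independent mBM started at $\X^{(T)}(t)$. Let $n$ be the (now $\mathcal F_t$-measurable) index with $t_n\leq t<t_{n+1}$. On $[t_n,t_{n+1}]$ we are following a process $\Z$ with the law of $\wcX^{(T)}$ started at $\X^{(T)}(t_n)$; conditioning on $\mathcal F_t$ amounts to conditioning on $\Z\big|_{[0,t-t_n]}$ together with the single extra fact $\tau_{n+1}>L+t-t_n$, i.e.\ that $\Z$ has not left $\Omega_N$ before time $t-t_n+T$. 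The key lemma I need is the diffusive analogue of the statement used in the mRW proof: under this conditioning, the law of $\Z\big|_{[t-t_n,\,t-t_n+T]}$ is that of $\wcX^{(T)}\big|_{[0,T]}$ for an independent copy of $\wcX^{(T)}$ started at $\X^{(T)}(t)=\Z(t-t_n)$. This is a consequence of the Markov property of the time-inhomogeneous process $\wcX^{(T)}$ built by conditioning on non-intersection on a window: the event being conditioned on factors through $\Z(t-t_n)$ and the future non-intersection event, exactly as in the discrete case. Granting this, Algorithm \ref{a.B} reconstitutes, from time $t$ onward, precisely an independent copy of itself started at $\X^{(T)}(t)$ — the piece on $[t,t_{n+1}]$ is $\wcX^{(T)}\big|_{[0,t_{n+1}-t]}$ and the piece on $[t_{n+1},\infty)$ is an independent run of the whole algorithm from $\X^{(T)}(t_{n+1})$ — which gives the Markov property, and time-homogeneity is immediate since nothing in the construction depends on absolute time.

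The main obstacle I expect is making rigorous the conditioning step in the previous paragraph, namely that the law of $\wcX^{(T)}$ on the window $[s,s+T]$, given its past up to time $s$ and given that it has survived (stayed in $\Omega_N$) up to $s+T$, is again that of $\wcX^{(T)}$ started afresh. In the mRW case this is transparent because we are on a countable state space and can write everything in terms of strictly positive transition probabilities; for diffusions one has to argue with densities and justify that the relevant conditional laws are well defined. This is where the technical results of Section \ref{s.diffusions} do the work: Proposition \ref{p.mbm-bdry} guarantees that the one-point (and, as noted in the remark, multipoint) distributions of $\wcX^{(T)}$ are well defined and absolutely continuous even when started from $\partial\Omega_N$, so the conditioning is not singular, and the Karlin--McGregor formula \eqref{e.KM} provides the explicit densities through which the Markov/factorization identity can be checked. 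A secondary technical point is verifying that the enlarged filtration $(\mathcal F_s)_{s\geq0}$ is genuinely larger than the natural filtration of $\X^{(T)}$ (so that the strong conclusion implies the ordinary Markov property), which is the same observation as in the proof of Proposition \ref{p.construccion_discreta} and requires no new ideas. Unlike that proof, there is no generator computation to carry out here, since the proposition only asserts the Markov property; if one wished, the generator could afterwards be identified as $\mathcal L_0$ from Section \ref{s.myopic-intro}, but that is not claimed in the statement.
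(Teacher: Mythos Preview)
Your proposal is correct and follows essentially the same approach as the paper: use Proposition \ref{p.uniform_separation} together with boundedness of $v'$ to get a uniform lower bound $\pp_x(t_{n+1}-t_n\geq\eta)\geq c>0$ and conclude $t_n\nearrow\infty$, then repeat the filtration-enlargement argument from Proposition \ref{p.construccion_discreta} verbatim for the Markov property. The paper's proof is in fact terser than yours---it simply asserts the discrete argument carries over word for word---and your worry about needing Proposition \ref{p.mbm-bdry} or explicit densities to justify the conditioning is unnecessary: since $\X^{(T)}(t)\in\Omega_N$ almost surely for every $t$, the event of non-intersection over the next $T$ units always has positive probability, so the conditioning is never singular and the factorization of conditional laws goes through exactly as in the countable-state case. (One small slip: the increments $t_{n+1}-t_n$ are not i.i.d., only conditionally independent given the past with a uniform-in-starting-point stochastic lower bound; your subsequent argument handles this correctly.)
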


\begin{proof}
We start by proving that $t_n\nearrow \infty$.
Thanks to Proposition \ref{p.uniform_separation}, together with the almost sure continuity of $\X$ and the fact that $v'$ is bounded, we know that there is an $\eta>0$ so that
\begin{equation}\label{e.t1_bd}
\pp_x(t_1\geq \eta)\geq 1/2
\end{equation}
for any $x\in\Omega_N$.
Repeating the argument at each time $t_n$ we similarly have that, for each $n\geq1$, $\pp_x(t_{n+1}-t_n\geq \eta)\geq 1/2$ for any $x\in\Omega_N$.
Since the collection $(t_{n+1}-t_n)_{n\geq1}$ is independent, this implies that $t_n\nearrow \infty$ almost surely.

To prove that $\X^{(T)}$ is a time-homogeneous Markov process, one can repeat word for word the argument which we used for $\Y^{(L)}$ in the proof of Proposition \ref{p.construccion_discreta} (after replacing $L$ with $T$ and $\Y$'s with $\X$'s).
\end{proof}

Of course, the whole point is that this algorithm constructs a process which is equal in law to the process which was introduced in Definition \ref{d.mBM}.
We state this as a proposition, which will be proved together with Proposition \ref{p.existence_mBM} in the next subsection:

\begin{prop}
    \label{p.equal_mBMs}
    The process constructed through Algorithm \ref{a.B} is an mBM, i.e. it has the law of the process introduced in Proposition \ref{p.existence_mBM} and Definition \ref{d.mBM}.
\end{prop}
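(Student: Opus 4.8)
The plan is to show that the two constructions—the glueing procedure of Section \ref{s.glueing} and Algorithm \ref{a.B}—produce processes with the same finite-dimensional distributions by exhibiting an explicit coupling in which, along a suitable subsequence of $\eps$'s, the $\eps$-approximation $\X^{(T,\eps)}$ converges almost surely (uniformly on compacts) to the process produced by Algorithm \ref{a.B}. Since both Proposition \ref{p.existence_mBM} and Proposition \ref{p.equal_mBMs} are to be proved together, this coupling will simultaneously establish the existence of the limit (giving Proposition \ref{p.existence_mBM}) and identify it with the algorithmic construction.

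The key observation that makes the coupling work is that both constructions are built, piece by piece, out of i.i.d.\ copies of the \emph{same} basic object: a trajectory of $\wcX^{(T)}$ run until its first collision time $\tau$. In Algorithm \ref{a.B} one keeps the piece on $[0,\tau-T]$ and restarts; in the $\eps$-approximation one instead keeps the piece on $[0,\eps]$, then restarts with a fresh copy, and so on. To couple them, I would realize the $\eps$-approximation from the \emph{same} sequence of copies $\wcX^{(T,1)},\wcX^{(T,2)},\dots$ used by Algorithm \ref{a.B}: run $\X^{(T,\eps)}$ by following $\wcX^{(T,1)}$ in blocks of length $\eps$, which amounts to following $\wcX^{(T,1)}$ on all of $[0,t_1]$ (up to the last multiple of $\eps$ before $t_1$) \emph{except} that, strictly speaking, the $\eps$-approximation uses a fresh conditioned copy at each step $k\eps$ rather than continuing the same one. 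The crucial point is that, by the Markov property of $\wcX^{(T)}$ together with the fact that conditioning a piece of $\wcX^{(T)}$ on surviving an additional stretch of length $T$ again yields (the law of) $\wcX^{(T)}$ restarted—exactly the identity already exploited in the proof of Proposition \ref{p.construccion_discreta}—the law of $\wcX^{(T)}\big|_{[0,k\eps]}$ run in $\eps$-blocks coincides with the law of $\wcX^{(T)}\big|_{[0,k\eps]}$ conditioned on non-intersection over $[0,k\eps+T]$, provided $k\eps < \tau - T$ in the single-copy picture. Hence a clean way to set up the coupling is: sample $\wcX^{(T,1)}$; on the event $t_1 = \tau_1 - T$, the trajectory of $\X^{(T,\eps)}$ on $[0,\lfloor t_1/\eps\rfloor\eps]$ can be taken \emph{equal} to $\wcX^{(T,1)}$ on that interval, with the mismatch confined to the last sub-$\eps$ window; then proceed inductively from $\X^{(T,\eps)}(\lfloor t_1/\eps\rfloor\eps)$, using $\wcX^{(T,2)}$, and so on. Because $t_n \nearrow \infty$ (Proposition \ref{p.construccion_continua}) and each block $[t_n,t_{n+1}]$ has positive length bounded below in probability (by \eqref{e.t1_bd}), on any compact time interval only finitely many blocks are involved, and one controls the total discrepancy by $N$ times the number of blocks times a modulus-of-continuity bound of size $o(1)$ as $\eps\to0$, using the almost-sure continuity of the diffusion \eqref{e.SDE} and the uniform control near $\partial\Omega_N$ from Proposition \ref{p.uniform_separation}. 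Passing to a subsequence along which this convergence is almost sure yields the continuous limiting process and its identification with the output of Algorithm \ref{a.B}; since Algorithm \ref{a.B} already produces a time-homogeneous Markov process by Proposition \ref{p.construccion_continua}, this also delivers the Markov property and time-homogeneity claimed in Proposition \ref{p.existence_mBM}.

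I expect the main obstacle to be the bookkeeping at the block boundaries and, more subtly, handling the endpoint mismatch: when $k\eps$ falls inside a block $[t_n,t_{n+1}]$, the $\eps$-approximation restarts a \emph{fresh} copy of $\wcX^{(T)}$ from the current position, whereas Algorithm \ref{a.B} keeps following the \emph{same} copy. These two prescriptions do not give literally the same trajectory, so the coupling cannot be an exact equality; one must argue that the laws of the \emph{increments} over the next $\eps$ agree up to the conditioning event $\{t\,$ still in the current block$\}$, whose probability is $1-O(\eps)$, and that the accumulated error over a compact interval vanishes. Making this quantitative requires the Karlin–McGregor representation \eqref{e.KM} together with the density bound \eqref{e.friedman-bd} to compare, uniformly over the relevant (compactly supported, by periodicity) initial data, the density of $\wcX^{(T)}(\eps)$ against that of the corresponding glued object, and then a telescoping/Borel–Cantelli argument along the subsequence. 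A secondary but real difficulty is that the initial condition of each restart can approach $\partial\Omega_N$; here Proposition \ref{p.mbm-bdry} guarantees the one-point laws remain well defined and absolutely continuous, and Proposition \ref{p.uniform_separation} gives the uniform separation after one full window of length $T$, which is exactly what is needed to keep the per-block error estimates uniform in $n$.
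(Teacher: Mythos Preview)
Your overall plan---simultaneously establish Propositions~\ref{p.existence_mBM} and~\ref{p.equal_mBMs} by coupling the $\eps$-approximation to the output of Algorithm~\ref{a.B}---is exactly the paper's approach, and your observation that the $\eps$-glueing can be realized by following a single copy $\wcX^{(T,n+1)}$ across several consecutive $\eps$-blocks (because restarting $\wcX^{(T)}$ at $k\eps$ is, in law, the same as continuing the running copy conditioned on survival for $T$ more units) is precisely the content of the paper's Algorithm~\ref{a.C} and the lemma immediately following it. So the first half of your sketch is essentially correct and matches the paper.

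The gap is at the block boundaries. Once you pass from block $n$ to block $n{+}1$, the two constructions sit at \emph{different} spatial points: Algorithm~\ref{a.B} restarts from $\X^{(T)}(t_n)$, while the $\eps$-version must restart from $\X^{(T,\eps)}(t_n^\eps)$, and these differ by a sub-$\eps$ displacement along the previous trajectory. You therefore cannot literally ``use $\wcX^{(T,2)}$'' for both, and your telescoping/density proposal does not explain why a small initial discrepancy between two copies of $\wcX^{(T)}$ started from nearby points fails to amplify over a block of macroscopic length $t_{n+1}-t_n$. The Karlin--McGregor and Friedman bounds you cite control one-point densities, not the pathwise closeness you need; and once the paths separate, the ``finitely many blocks $\times$ modulus of continuity'' bookkeeping no longer applies.

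The paper closes this gap with a dedicated pathwise merging coupling (Claim~\ref{c.epsiloncoupling}): for any $u,\eta,\gamma>0$ there is $\delta>0$ such that if $x_0,y_0\in\Omega_N^{\gamma}$ with $|x_0-y_0|\le\delta$, one can couple $\wcX^{(T)}_{x_0}$ and $\wcX^{(T)}_{y_0}$ so that they \emph{coincide} on $[u,\infty)$ with probability at least $1-\eta$. Its proof runs the underlying unconditioned diffusions independently coordinatewise until they meet---fast, since the difference is dominated by $\sqrt{2}B_t$ plus a bounded drift started near zero---and then realizes both conditioned processes by acceptance-rejection on the \emph{same} sequence of samples, so that with high probability the accepting index agrees. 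Proposition~\ref{p.uniform_separation} enters here, not where you placed it: it ensures each restart point lies in some $\Omega_N^{\gamma}$ with high probability, so the claim applies uniformly across iterations. With this in hand one restarts Algorithm~\ref{a.B} at each $t_n^\eps$, couples the two fresh copies of $\wcX^{(T)}$ via the claim, and bounds $\pp\big(\sup_{s\le R}\|\X^{(T)}(s)-\X^{(T,\eps)}(s)\|\ge r\big)$ by choosing $M,\eta,u,\gamma,\eps$ in that order. Your sketch has the right skeleton but is missing this merging-coupling step; without it the inductive control at block boundaries does not close.
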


\subsubsection{Proof of Propositions \ref{p.existence_mBM} and \ref{p.equal_mBMs}}

In this subsection, we prove Propositions \ref{p.existence_mBM} and \ref{p.equal_mBMs} in one go.
The first step is to provide a construction of $\X^{(T,\eps)}$ which is analogous to Algorithm \ref{a.B}:

\begin{algo}\label{a.C}
    \leavevmode
    \begin{enumerate}
    \item Let $t_0=0$ and set $\X^{(T,\ep)}(t_0)=x$.
    \item For $n\geq0$, assume that we have defined $\X^{(T,\ep)}$ up to time $t_n$. 
    Sample a copy $\wcX^{(T,n+1)}$ of $\wcX^{(T)}$ with initial condition $\X^{(T,\ep)}(t_n)$, which is otherwise independent of the previous copies $\wcX^{(T,1)},\dotsc,\wcX^{(T,n)}$.
    Define $\tau_{n+1}$ to be first collision time of $\wcX^{(T,n+1)}$,
    \[\tau_{n+1}=\inf\{t>0: \wcX^{(T,n+1)}\notin\Omega_N\}.\]   
    Then let $t_{n+1}=\eps \lceil \eps^{-1}(\tau_{n+1}-T+t_n)\rceil>t_n$ and set
    \[\X^{(T,\eps)}\big|_{[t_n,t_{n+1}]}= \wcX^{(T,n+1)}\big|_{[0,t_{n+1}-t_n]}.\]
\end{enumerate}
\end{algo}

Note that in this case all of the $t_n$'s are multiples of $\ep$. Furthermore, it construct $\X^{(T,\eps)}$.
\begin{lem}
    Algorithm \ref{a.C} constructs $\X^{(T,\ep)}$.
\end{lem}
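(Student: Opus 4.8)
The plan is to show that the process produced by Algorithm \ref{a.C} has exactly the finite-dimensional distributions prescribed for $\X^{(T,\ep)}$ in Section \ref{s.glueing}, namely that on each dyadic block $[k\ep,(k+1)\ep]$ it evolves, conditionally on its position at time $k\ep$, as an independent copy of $\wcX^{(T)}\big|_{[0,\ep]}$ started from that position. Since both processes are defined block-by-block over the grid $\ep\zz_{\geq0}$, and since in Algorithm \ref{a.C} every $t_n$ is a multiple of $\ep$, it suffices to establish the block-wise property for the algorithmic process and then conclude by the Markov property (and an induction on blocks). So the first step I would carry out is to fix a block index $k$ and condition on everything up to time $k\ep$, i.e. on $\mathcal F_{k\ep}$ in the notation of the proof of Proposition \ref{p.construccion_discreta}, and identify which copy $\wcX^{(T,n+1)}$ is being traced at time $k\ep$; say $t_n\leq k\ep<t_{n+1}$, and let $\Z$ denote this copy started at $\X^{(T,\ep)}(t_n)$, so that on $[t_n,t_{n+1}]$ we have $\X^{(T,\ep)}=\Z(\cdot-t_n)$.

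The second step is the key distributional identity: conditionally on $\mathcal F_{k\ep}$, the law of $\Z\big|_{[k\ep-t_n,\,k\ep-t_n+\ep]}$ is that of $\wcX^{(T)}\big|_{[0,\ep]}$ started from $\X^{(T,\ep)}(k\ep)$. The point here is precisely the one already isolated in the proof of Proposition \ref{p.construccion_discreta}: the only extra information contained in $\mathcal F_{k\ep}$, beyond $\Z\big|_{[0,k\ep-t_n]}$, is whether $t_{n+1}\leq k\ep$ or $t_{n+1}>k\ep$; and in either case this amounts only to information about whether $\Z$ has exited $\Omega_N$ by some time in $[k\ep-t_n,\,k\ep-t_n+T]$. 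Because $\wcX^{(T)}$ is itself defined by conditioning the free diffusion on non-intersection over a window of length $T$, the Markov property of the free diffusion together with the tower property of conditional expectation gives that the conditional law of the next increment of $\Z$, given all this, is exactly $\wcX^{(T)}\big|_{[0,\ep]}$ from the current point --- whether or not a restart has occurred at some $t_m\in(k\ep-t_n,k\ep]$, since at such a restart one simply glues on a fresh independent copy of $\wcX^{(T)}$, which again matches. I would spell this out by distinguishing the two cases ($t_{n+1}>k\ep$, so no restart in the block; and $t_{n+1}\le k\ep$, so one or more restarts, each contributing an independent fresh copy) and checking that in both the one-block law is $\wcX^{(T)}\big|_{[0,\ep]}$ from $\X^{(T,\ep)}(k\ep)$, independently of the past.

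Finally, I would assemble these block-wise identities into the full statement: by induction on $k$, using the strong Markov property of the free diffusion and the definition of $\wcX^{(T)}$, the process built by Algorithm \ref{a.C} has, for every $k$, the prescribed conditional law on $[k\ep,(k+1)\ep]$ given its value at $k\ep$ and independent of the history, which is exactly the defining property of $\X^{(T,\ep)}$ from Section \ref{s.glueing}. (One should also note the almost-sure finiteness of the construction for all times, which follows as in Proposition \ref{p.construccion_continua} since $t_{n+1}-t_n\geq \tau_{n+1}-T\geq 0$ and in fact $t_{n+1}-t_n$ is bounded below by $\ep$ times a geometric-type count, using Proposition \ref{p.uniform_separation}; but this is routine.)

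The main obstacle I anticipate is the bookkeeping around restart times that fall strictly inside a block $[k\ep,(k+1)\ep]$: one has to argue cleanly that conditioning on $\mathcal F_{k\ep}$ does not leak information about the post-$k\ep$ trajectory beyond the ``has-not-yet-collided-within-$T$'' event, and that when a restart $t_m$ lands in the middle of a block the freshly sampled copy of $\wcX^{(T)}$ --- which is independent of the past by construction --- glues on in a way consistent with the single-copy conditional law. This is morally the same argument as in the proof of Proposition \ref{p.construccion_discreta}, and the cleanest route is to reduce to that proof by observing that Algorithm \ref{a.C} differs from Algorithm \ref{a.B} only in rounding each $t_{n+1}$ up to the grid, which does not affect the conditional-law computation at grid times.
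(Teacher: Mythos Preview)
Your approach is essentially the same as the paper's: a block-wise induction on $k$ showing that, conditionally on $\mathcal F_{k\ep}$, the process on $[k\ep,(k+1)\ep]$ is an independent copy of $\wcX^{(T)}\big|_{[0,\ep]}$ from the current position, with the key step being that the extra information in $\mathcal F_{k\ep}$ is precisely that the active copy $\Z=\wcX^{(T,n+1)}$ has not exited $\Omega_N$ by time $k\ep-t_n+T$.

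However, you are overcomplicating the case analysis, and this stems from missing a simple observation that the paper uses. Since $\tau_{n+1}>T$ almost surely, one has $\ep^{-1}(\tau_{n+1}-T+t_n)>\ep^{-1}t_n$, and because $\ep^{-1}t_n$ is an integer, the ceiling gives $t_{n+1}\geq t_n+\ep$. Thus all restart times are multiples of $\ep$ and are at least $\ep$ apart, so a restart can \emph{never} fall strictly inside a block $[k\ep,(k+1)\ep]$. Your worry about ``restart times that fall strictly inside a block'' and the case ``$t_{n+1}\le k\ep$, so one or more restarts'' is vacuous (indeed, if $n$ is chosen with $t_n\le k\ep<t_{n+1}$ then necessarily $t_{n+1}\ge (k+1)\ep$). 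The paper's two cases are simply $t_n=k\ep$ (fresh copy) and $t_n<k\ep$ (continuing the same copy), which is all that is needed. This same observation also gives $t_n\nearrow\infty$ immediately, so your detour through Proposition \ref{p.uniform_separation} and ``geometric-type counts'' for finiteness is unnecessary.
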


\begin{proof}
    The proof is similar to that of Proposition \ref{p.construccion_continua}.
    In this case, since $\tau_{n+1}>T$ almost surely for all $n$, we have $t_{n+1}-t_n\geq\ep$ for all $n\geq0$, so $t_n\nearrow\infty$ almost surely.

    Let $\overline{\X}^{(T,\eps)}$ denote the process constructed by Algorithm \ref{a.C}.
    As before we have $t_1\geq\ep$, so by construction we get $\overline{\X}^{(T,\eps)}\big|_{[0,\ep]}=\wcX^{(T,1)}\big|_{[0,\ep]}$, which has the intended distribution for the process on the interval $[0,\ep]$.
    Now let $m\geq1$ and suppose that we already know that $\overline{\X}^{(T,\eps)}$ has the desired distribution on $[0,m\ep]$.
    Let $\mathcal{F}_{k\ep}=\sigma(\overline{\X}^{T,\eps}(s)\!:s\leq m\ep)\vee\sigma(t_n\uno{t_n\leq m\ep}\!:n\in\rr)$.
    We claim that the law of $\overline{\X}^{(T,\eps)}\big|_{[m\ep,(m+1)\ep]}$ given $\mathcal{F}_{m\ep}$ is the same as that of $\overline{\X}^{(T,\eps)}\big|_{[0,\ep]}$ for an independent copy of the process, started at $\overline{\X}^{(T,\eps)}(m\ep)$.
    As in the previous proof, this is enough because $\mathcal{F}_{m\ep}$ is larger than the natural filtration associated to $\overline{\X}^{(T,\eps)}$.
    And, similarly to that proof, the only information we get from conditioning on $\mathcal{F}_{m\ep}$ in addition to the trajectory of $\overline{\X}^{(T,\eps)}$ up to time $m\ep$ is the value of $n$ and of $t_n$ so that $t_n\leq m\ep<t_{n+1}$.
    If it happens that $t_n=m\ep$ then by construction of Algorithm \ref{a.C}, and for the same reason as for the argument for the interval $[0,\ep]$, the conditional distribution of the process on $[m\ep,(m+1)]$ is the desired one.
    If instead we have $t_n<m\ep<t_{n+1}$ then, as in the previous proof, we know that $\wcX^{(T,n+1)}$ has not exited the Weyl chamber up to time $m\ep-t_n+T$, and as before we deduce that $\overline{\X}^{(T,\eps)}$ has the specified distribution on $[m\ep,(m+1)\ep]$.
\end{proof}

We now have all the tools to prove the two propositions.

\begin{proof}[Proof of Propositions \ref{p.existence_mBM} and \ref{p.equal_mBMs}]
The proof proceeds by coupling. 
It is based on the following claim:

\begin{claim}\label{c.epsiloncoupling}
    Fix $u >0$, $\eta>0$ and $\gamma>0$. Then, there is a $\delta>0$ such that for all $x_0,y_0\in \Omega_N^{\gamma}$ (defined in \eqref{e.Omega-gamma}) with $|x_0-y_0|\leq \delta$, there exists a coupling between copies $\wcX^{(T)}_{x_0}$ and $\wcX^{(T)}_{y_0}$ of $\wcX^{(T)}$, starting at $x_0$ and $y_0$ respectively, such that
    \begin{align*}
    \pp\!\left (\wcX_{x_0}^{(T)}\big|_{[u,\infty)}=\wcX^{(T)}_{y_0}\big|_{[u,\infty)}\right ) \geq1-\eta.
    \end{align*}
\end{claim}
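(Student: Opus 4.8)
The plan is to couple the two conditioned processes $\wcX^{(T)}_{x_0}$ and $\wcX^{(T)}_{y_0}$ in two stages: first bring the two trajectories to exactly the same location at some time before $u$, and then use the strong Markov property of the conditioned process to let them run together from there on. The key difficulty, and what distinguishes this from the unconditioned setting, is that $\wcX^{(T)}$ is the $N$ independent diffusions \emph{conditioned on non-intersection in the window $[0,T]$}, so one cannot simply couple the driving Brownian motions and wait for them to meet — the conditioning couples all coordinates together and is time-inhomogeneous. I would therefore work with a fixed small time $s\in(0,u)$ and compare the laws $\mathbf{P}^{s,T}_{x_0}$ and $\mathbf{P}^{s,T}_{y_0}$ of $\wcX^{(T)}(s)$ directly, aiming to show these laws are close in total variation when $|x_0-y_0|$ is small, uniformly over $x_0,y_0\in\Omega_N^\gamma$.

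**First I would** establish that, for fixed $s\in(0,T)$, the map $x\mapsto\mathbf{P}^{s,T}_{x}$ is continuous in total variation, uniformly over $x$ in the closed set $\overline{\Omega_N^\gamma}$ (which is where the compactness coming from the periodicity of $v'$, used exactly as in the proof of Proposition \ref{p.uniform_separation}, lets us reduce to a bounded region after translation). This is where I expect the main work to be. Using the Karlin–McGregor representation \eqref{e.KM}, the density of $\wcX^{(T)}(s)$ started at $x$ is
\[
q^{s,T}_x(y)=\frac{\det[p_s(x_i,y_j)]\,h^{x}_{T-s}(y)}{\int_{\Omega_N}\det[p_s(x_i,z_j)]\,h^{z}_{T-s}\,\cdots},
\]
where $h^{y}_{T-s}(y)=\pp_y(\X(r)\in\Omega_N\ \forall r\le T-s)$; the smoothness estimate \eqref{e.friedman-bd} on $p_s$ and its derivatives, together with the lower bound on the denominator obtained exactly as in Proposition \ref{p.mbm-bdry} (which gives a strictly positive, locally uniform lower bound on $\int_{\Omega_N}\det[p_T(x_i,y_j)]\,dy$ for $x$ in a compact set), shows that $x\mapsto q^{s,T}_x$ is continuous in $L^1(\Omega_N,dy)$, uniformly over compact sets of initial conditions. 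Hence given $\eta$ there is $\delta$ so that $|x_0-y_0|\le\delta$ implies $\|\mathbf{P}^{s,T}_{x_0}-\mathbf{P}^{s,T}_{y_0}\|_{\mathrm{TV}}\le\eta$.

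**Then** I would build the coupling: by the maximal coupling associated to total variation, there is a coupling of $\wcX^{(T)}_{x_0}(s)$ and $\wcX^{(T)}_{y_0}(s)$ that agrees with probability at least $1-\eta$; on the complementary event we let the two copies evolve independently. On the good event, at time $s$ both copies are at the same point $\zeta\in\Omega_N$, and — using the Markov property of $\wcX^{(T)}$, under which, conditionally on the position at time $s$, the law of the trajectory on $[s,\infty)$ depends only on $\zeta$ (for $s<T$ it is $\X$ conditioned on staying in $\Omega_N$ on $[s,T]$, and after time $T$ it is just independent $\X$'s; in all cases the post-$s$ law is a function of $\zeta$ alone) — we can run the two copies using the same randomness from time $s$ onward, so they coincide on all of $[s,\infty)$, hence on $[u,\infty)$ since $u>s$. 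Putting this together gives $\pp(\wcX^{(T)}_{x_0}|_{[u,\infty)}=\wcX^{(T)}_{y_0}|_{[u,\infty)})\ge 1-\eta$, as claimed.

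**The main obstacle** is the uniformity in $x_0,y_0$ over the unbounded family $\Omega_N^\gamma$: a priori the initial points can be far out along the real line and also spread far apart from one another. As in Proposition \ref{p.uniform_separation} one handles this by translating by $-x_0{}_{,1}$ using periodicity of $v'$ to fix the first coordinate in $[-\tfrac12,\tfrac12)$, and when some gaps $x^n_{k+1}-x^n_k$ diverge one splits into clusters $J_-,J_+$ that decouple asymptotically (the density $p_s$ decays like $e^{-c\cdot(\text{distance})^2}$ by \eqref{e.friedman-bd}, so inter-cluster blocks of the Karlin–McGregor determinant vanish in the limit and the conditioned dynamics factorizes over clusters). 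One then runs the total-variation continuity argument cluster by cluster; the bound $\delta$ produced is uniform because within each cluster the configuration lives in a fixed compact set after translation. This is somewhat technical but is a direct transcription of the machinery already developed in Sections \ref{s.diffusions}, so I would state it and refer back rather than repeat all details.
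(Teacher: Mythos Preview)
Your approach is correct but genuinely different from the paper's. You argue at the level of laws: fix $s\in(0,u)$, show $x\mapsto\mathbf{P}^{s,T}_x$ is continuous in total variation uniformly over $\Omega_N^\gamma$, maximal-couple the two marginals at time $s$, and then glue via the time-inhomogeneous Markov property of $\wcX^{(T)}$. The paper instead works pathwise on the \emph{unconditioned} system: it couples each coordinate of $\X_{x_0}$ and $\X_{y_0}$ by running them independently until they meet and then together (the meeting time is short since $\X_{y_0}(t)-\X_{x_0}(t)\le\delta+\sqrt{2}B_t+2\|v'\|_\infty t$), and then manufactures the conditioned processes out of this coupling by acceptance--rejection sampling, taking i.i.d.\ coupled pairs and keeping the first index for which each copy stays in $\Omega_N$ on $[0,T]$.

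The trade-off is in how uniformity over the unbounded set $\Omega_N^\gamma$ is obtained. In the paper's route it is essentially free: the meeting-time estimate uses only $\|v'\|_\infty$, and the rejection step only needs $\inf_{x\in\Omega_N^\gamma}\pp_x(\X\text{ stays in }\Omega_N\text{ on }[0,T])>0$ and $\sup_{x\in\Omega_N^\gamma}\pp_x(\X\text{ exits }\Omega_N\text{ in }[0,u'])\to0$ as $u'\to0$, both of which follow from a crude bound on the displacement of each coordinate (gaps are at least $\gamma$). No cluster decomposition is invoked. Your route, by committing to $L^1$-continuity of the conditioned density $q^{s,T}_x$, forces you back into the translation/cluster machinery of Proposition~\ref{p.uniform_separation} to handle diverging gaps; this works, but it is heavier than necessary, and ``direct transcription'' undersells the amount of adaptation required (Proposition~\ref{p.uniform_separation} controls concentration of the time-$T$ marginal away from $\partial\Omega_N$, not an $L^1$ modulus of continuity in the initial point). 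If you want to keep your TV framework but lighten the uniformity step, note that the paper's pathwise coupling already bounds the TV distance between the \emph{unconditioned} path laws uniformly, and conditioning on an event of probability bounded below by $p>0$ inflates TV distance by at most a constant depending on $p$.
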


Take $\eta,\gamma>0$ and $0<\eps<u<T$ to be fixed later and let $\delta(u,\eta,\gamma)< \gamma$ be given as in Claim \ref{c.epsiloncoupling}.
We now use the iterative construction to build a coupling between $\X^{(T)}$ and $\X^{(T,\eps)}$ inductively on the intervals $[t_n^\ep,t_{n+1}^\ep]$, where $t_n^\ep$ and denotes the times arising in Algorithm \ref{a.C} (we will similarly use $t_n$ for those arising in Algorithm \ref{a.B}).
We will say that step $n$ of the coupling is successful if $\X^{(T)}$ and $\X^{(T,\eps)}$ belong to $\Omega_N^\gamma$ and $\|\X^{(T)}(t_n^\eps)-\X^{(T,\eps)}(t_n^\eps)\|\leq \delta$.

We begin by fixing an initial condition $\X(0)\in\Omega_N$ and setting $\X^{(T)}(0)=\X^{(T,\ep)}(0)=\X(0)$.
With this, step 0 of the coupling is clearly successful.
Now suppose that the we have built the coupling successfully up to time $t_n^\ep$.
Since $\X^{(T)}$ is a time-homogenous Markov process, we can restart its construction at time $t_n^\ep$, starting from $\X^{(T)}(t_n^\eps)$.
We use the coupling from Claim \ref{c.epsiloncoupling} to build copies $\wcX^{(T,n)}$ and $\wcX^{(T,n)}_\eps$ of $\wcX^{(T)}$ with initial conditions $\X^{(T)}(t_{n}^\eps)$ and $\X^{(T,\eps)}(t_{n}^\eps)$, and use them to perform the $n$-th steps of each of Algorithms \ref{a.B} and \ref{a.C}.
Since $u<T$, the two trajectories coincide after time $T$, and this implies that $t_{n+1}\leq t_{n+1}^\eps<t_{n+1}+\eps$.
Then we let $\X^{(T)}$ evolve independently from time $t_{n+1}$ to time $t_{n+1}^\eps$.
If the coupling fails at this step (i.e. if either $\X^{(T)}(t_n^\eps)$ or $\X^{(T)}(t_n^\eps)$ do not belong to $\Omega_N^\gamma$ or $\|\X^{(T)}(t_n^\eps)-\X^{(T,\eps)}(t_n^\eps)\|>\delta$), we stop the construction and we continue the coupling independently.

Fix $R>0$ and let $n(R)$ be the first time such that $t_n>R$. 
We want to estimate the probability that $\X^{(T)}$ and $\X^{(T,\ep)}$ are at distance bigger than $r>0$ on the interval $[0,R]$.
To that end we write
\begin{align*}
    &\pp\Big(\sup_{s\in [0,R]}\|\X^{(T)}(s) - \X^{(T,\eps)}(s)\| \geq r \Big)  \\
    &\;\;\;\leq\pp(n(R)>M)+ M\eta+ \pp(t_{n+1}-t_n^\eps\leq u, \ \forall n \leq M)+\pp\!\left( \X^{(T)}(s)\notin \Omega_N^{\gamma}\text{ for some }s\in [0,R]\right )\\
    &\qquad\;+ \pp\bigg(\sup_{n\leq M} \sup_{\substack{s_1,s_2\in [0,R]\\|s_1-s_2|\leq u}} \|\wcX^{(T,n)}_\eps(s_1)-\wcX^{(T,n)}_\eps(s_2)\|\geq \frac{r}{2} , \sup_{\substack{s_1,s_2\in [0,R]\\|s_1-s_2|\leq u}} \|\X^{(T)}(s_1)-\X^{(T)}(s_2)\|\geq \frac{r}{2}\bigg) \\
    &\quad+ \pp\bigg ( \sup_{\substack{s_1,s_2\in [0,R]\\|s_1-s_2|\leq \eps}} \|\X^{(T)}(s_1)-\X^{(T)}(s_2)\|\geq \delta(u,\eta,\gamma)\bigg ).
\end{align*}
Take $\upsilon>0$. We first choose an $M>0$ such that $\pp(n(R)>M) < \upsilon$. 
We then choose $\eta$, $u$ and $\gamma$ so that the sum of the remaining terms in the second line are smaller than $3 \upsilon$. 
We may ask $u$ to be even smaller so that third line also becomes smaller than $\upsilon$. 
Having chosen $\eta,u,\gamma>0$, as well as $M$, we may now choose $\eps$ to be small enough so that the last term is also smaller than $\upsilon$. We get a bound of $6\upsilon$ for the probability we are interested, and since $\upsilon$ is arbitrary, the conclusion follows.

It remains then to show that the claim holds.

\begin{proof}[Proof of Claim \ref{c.epsiloncoupling}]
Let us first couple $\X_{x_0}$ with $\X_{y_0}$. We will couple each component independently, so we   assume that $N=1$ and $x_0<y_0 \in \rr$. The one-dimensional coupling is as follows: start running $\X_{x_0}$ and $\X_{y_0}$ independently until the first time they intersect, $\tau\coloneqq\inf\{s>0\!: \X_{x_0}(s)=\X_{y_0}(s)\}$. After that, continue the two trajectories together, i.e. make  $\X_{x_0}\big|_{[\tau,\infty)} = \X_{y_0}\big|_{[\tau,\infty)}$.
We claim that for any $\eta_0>0$ there is a $\delta>0$ such that $\pp(\tau<u)\leq \eta_0$ if $y_0-x_0\leq\delta$: in fact, if $\tilde \X_{x_0}$ and $\tilde \X_{y_0}$ are  independent solutions of \eqref{e.SDE} starting from $x_0$ and $y_0$, we have (for some standard Brownian motion $B_t$)
\begin{align}
    \tilde \X_{y_0}(t)-\tilde \X_{x_0}(t)= y_0-x_0+\sqrt{2}B_{t}+ \int_0^{t} v'(\tilde \X_{y_0}(s))-v'(\tilde \X_{x_0}(s)) ds \leq \delta + \sqrt{2} B_t + 2\|v'\|_\infty t.
\end{align}
Letting $Z_t$ denote the process on the right hand side, we have $\pp(\tau>u)\leq\pp(\inf_{s\in [0,u]} Z(s)>0)$, which converges to $0$ as $\delta \to 0$.

The above argument implies that we can couple $\X_{x_0}$ with $\X_{y_0}$ when both live in $\rr^N$ in such a way that $\pp\big(\X_{x_0}\big|_{[u,\infty)}=\X_{y_0}\big|_{[u,\infty)}\big)>1-\eta/2$ if $\|y_0-x_0\|\leq\delta$.

We now construct $\wcX_{x_0}$ and $\wcX_{y_0}$ using acceptance-rejection sampling as follows: sample i.i.d. copies $(\X_{x_0}^j, \X_{y_0}^j)_{j\in\nn}$ of the coupling prescribed above, define
\begin{align*}
j_{x_0}=\inf\{j\!:\,\X_{x_0}^j(t)\;\;\forall\,t\in[0,T] \in \Omega_N \} \qqand j_{y_0}=\inf\{j\!:\,\X_{y_0}^j(t)\;\;\forall\,t\in[0,T] \in \Omega_N \},
\end{align*}
and set $\wcX^{(T)}_{x_0}=\X^{j_{x_0}}_{x_0}$ and $\wcX^{(T)}_{y_0}=\X^{j_{y_0}}_{y_0}$.
The now claim follows from the construction if we show that, with probability converging to $1$ as $\delta \to 0$, $j_{x_0}=j_{y_0}$ if $\|y_0-x_0\|\leq\delta$.
But this is a consequence of the facts that
\begin{align*}
\inf _{x_0\in \Omega_N^{\gamma}} \pp\big(\X_{x_0}(s)\in \Omega_N\;\;\forall s \in [0,T]\big) >0
\end{align*}
and that 
\begin{align*}
\sup_{x\in \Omega_N^{\gamma}} \pp\big(\X_{x}(s)\notin \Omega_N\;\uptext{for some}\;s\in [0,u']\big) \xrightarrow[u'\to 0]{} 0,
\end{align*}
as $v'$ is bounded.
\end{proof}
\end{proof}

\section{From myopic NIBM to myopic NIRW}\label{sec:fromNIBMtoNRW}

The goal of this section is to prove Theorem \ref{t.main}.
Before doing so, we need to understand the behavior of a single solution of \eqref{e.SDE} as $\kappa$ gets large.

\subsection{Metastability for Brownian motion in a periodic potential}\label{sec:F-W}

Let $\X^\kappa$ denote a single Brownian particle $\X^\kappa$ subject to the periodic potential $\kappa v$, i.e., a solution solution of \eqref{e.SDE} with the given choice of $\kappa$.
The goal of this section is to study the behavior of $\caja{\X^\kappa}$ as $\kappa\to\infty$ where, we recall, $\caja{\X^{(T^\kappa)}}$ denotes the process that tracks the integer sites visited by each coordinate (see the notation \eqref{e.caja} and \eqref{e.caja-vec}).
We want to show that if $\X^\kappa$ is a solution of \eqref{e.SDE} then, under the correct time reparametrization, $\caja{\X^\kappa}$ converges to a Poisson process (as stated in \eqref{e.FW}).

The argument will be based on the classical metastability results for Brownian motion on a double-well potential in the regime of the Friedlin-Wentzell large deviation theory for random pertubations of dynamical systems \cite{freidlinWentzell}.
The basic idea is that, when observed on an interval which is an integer translate of  $[-1/2+\eta,3/2-\eta]$ (for small $\eta$), the potential to which $\X^\kappa$ is subject looks, for large $\kappa$, like a double-well potential, i.e., a smooth potential $g(x)$ with two local minima (at $x=0$ and $x=1$) and one local maximum (at $x=1/2$) and which goes to infinity as $x\to\pm\infty$.
The behavior of a Brownian particle subject to the potential $g$ can then be used to understand the evolution of $\caja{\X^\kappa}$ in the appropriate regime.

We begin then by describing these classical results, in a slightly more general setting.
The results which we state first appeared, as far as we know, in \cite{Eulalia_paper}, but in our presentation we follow Chapter 5.2 of \cite{Eulalia_book}.
Compared to that book, we state the results under a change of time so that we are under a strong drift rather than a weak diffusion regime\footnote{To be more precise, we speed up time by $\kappa=\epsilon^{-2}$ in their notation.}.

\begin{figure}
    \centering
    \includegraphics[width=3in]{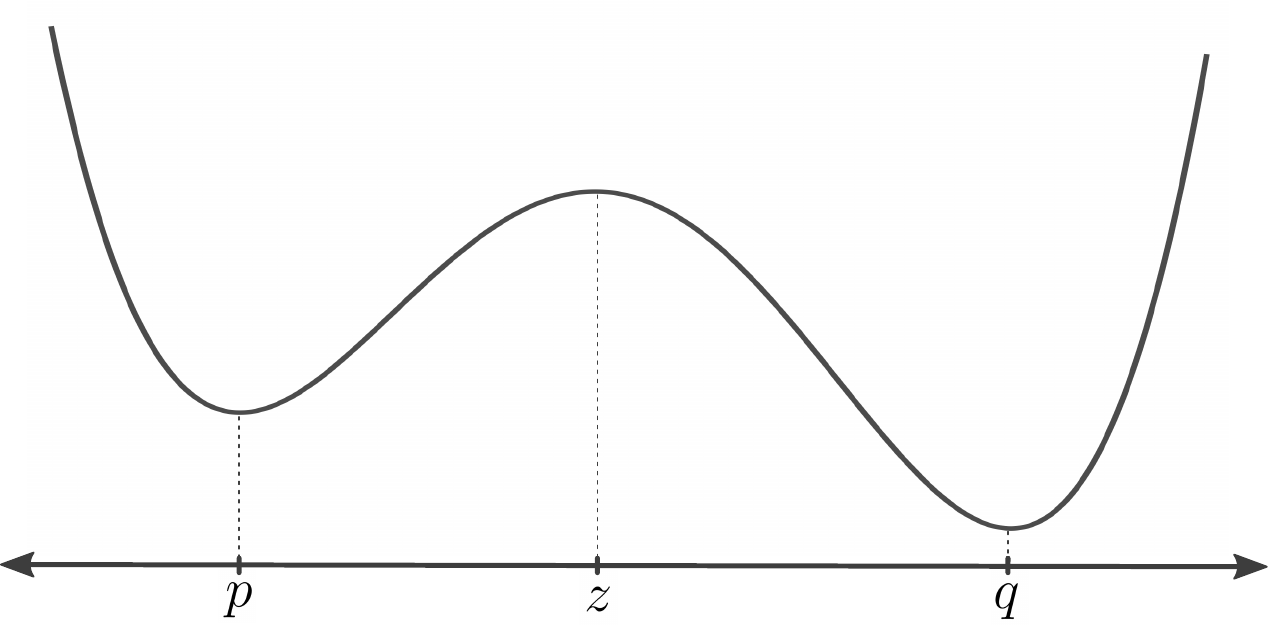}
    \caption{A double-well potential as in Section \ref{sec:F-W} (in Section \ref{s.pf-main}, where the metastability results are applied, we will use $p=0$, $z=1/2$, and $q=1$).}
    \label{f.v-potential}
\end{figure}

Take $\kappa>0$ and a function $g\in C^2(\rr)$. 
Assume that $\X^\kappa$ solves the equation
\begin{equation}
    \X^\kappa(t) = -\kappa\int_0^t g'(\X^\kappa(t))dt + dB_t,\label{e.sde-3}
\end{equation}
where $B$ is a standard Brownian motion.
In this subsection we assume that the following hypotheses hold:
\begin{enumerate}[label=(H\arabic{*})]
\item $g$ goes to infinity faster than linearly, i.e., $\lim_{|x|\to \infty} g(x)/|x| = \infty$.
\item $g$ has exactly three critical points, $p<z<q$, with  $g(q)<g(p)<g(z)$. 
Furthermore, $g''(p), g''(q)>0$ and $g''(z)<0$ (see Figure \ref{f.v-potential}). 
\item $g'$ is globally Lipschitz, that is, there is $K>0$ such that for any $x$, $y \in \rr$ 
\begin{align*}
|g'(x)-g'(y)| \leq K|x-y|.
\end{align*}
\end{enumerate}
We will refer to this set of assumptions as \emph{hypothesis (H)}.

Now we fix an arbitrary height $\hat g>g(z)$ and define 
\[\tau^\kappa=\inf\{t>0\!:\X^\kappa_t=q\;\text{or}\;g(\X^\kappa)=\hat g\}.\]
We are interested in the stopping time $\tau^\kappa$ with the process starting to the left of $z$, in which case it should be thought of as the first time that the process hits either $q$ to the right of $z$ or some arbitrary large height $\hat g$ to the left of $z$ (see again Figure \ref{f.v-potential}).
Note that our assumptions on $b$ ensure that both hitting times are finite almost surely.

Let $\lambda^\kappa$
be the unique positive real numbers such that
\begin{align*}
\pp_p(\tau^\kappa>\lambda^\kappa)=e^{-1}.
\end{align*}
The following result gives precise information on the size of $\lambda^\kappa$ and on the distribution of $\tau^\kappa$ and $\X^\kappa_{\tau^\kappa}$ for large $\kappa$:

\begin{thm}\label{t.beta}
    Assume that $\X^\kappa$ satisfies hypothesis (H) and has initial condition $x<z$.
    Then
    \begin{align}
      \label{e.Delta}&\lim_{\kappa \to \infty} \frac{\log \lambda^\kappa }{\kappa } = 2(g(z)-g(p))\eqqcolon\Delta
    \end{align}
    Furthermore, $\tau^\kappa/\lambda^\kappa$ converges in law to an exponential random variable with parameter 1, and $\X^\kappa_{\tau^\kappa} = q$ with probability tending to $1$ as $\kappa\to\infty$.
\end{thm}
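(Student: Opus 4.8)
The plan is to reduce the statement to the classical metastability results for diffusions with small noise, after a time change. Starting from the equation $\X^\kappa(t) = -\kappa\int_0^t g'(\X^\kappa(s))\,ds + B_t$, I would first rescale time by setting $Y^\kappa(s) = \X^\kappa(\kappa^{-1}s)$. A direct computation with the scaling of Brownian motion shows that $Y^\kappa$ solves $dY^\kappa(s) = -g'(Y^\kappa(s))\,ds + \kappa^{-1/2}\,d\tilde B_s$ for a standard Brownian motion $\tilde B$, i.e., $Y^\kappa$ is exactly a diffusion with small noise $\epsilon = \kappa^{-1/2}$ in the potential $g$. This puts us precisely in the Freidlin--Wentzell regime described in \cite{Eulalia_book}, Chapter 5.2, whose results are quoted in the excerpt. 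Hypotheses (H1)--(H3) are exactly the standing assumptions of that theory: (H1) gives confinement, (H2) describes the double-well geometry (note that with $g(q) < g(p) < g(z)$, the well at $p$ is the shallower one, so the relevant escape is from $p$ over the barrier at $z$), and (H3) is the Lipschitz bound needed for the SDE to be well posed and for the classical estimates to apply.

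Next I would translate the stopping time. Under the time change, $\tau^\kappa = \inf\{t : \X^\kappa_t = q \text{ or } g(\X^\kappa_t) = \hat g\}$ becomes $\kappa^{-1}$ times $\sigma^\kappa := \inf\{s : Y^\kappa_s = q \text{ or } g(Y^\kappa_s) = \hat g\}$, which is the exit time of $Y^\kappa$ from the interval bounded on the right by $q$ and on the left by the level set $\{g = \hat g\}$ below $z$. The classical Freidlin--Wentzell / Bovier-type results give: first, the exponential asymptotics $\epsilon^2 \log \mathbb{E}_p[\sigma^\kappa] \to 2(g(z) - g(p))$, equivalently $\kappa^{-1}\log\mathbb{E}_p[\sigma^\kappa] \to 2(g(z)-g(p))$; second, that $\sigma^\kappa / \mathbb{E}_p[\sigma^\kappa]$ converges in distribution to a mean-one exponential; and third, that the exit happens through the ``easy'' side — over the barrier at $z$ toward $q$ — with probability tending to $1$, since the alternative (climbing the potential to the arbitrary height $\hat g > g(z)$ on the left) is exponentially more costly. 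From the last point, $Y^\kappa_{\sigma^\kappa} = q$ with probability $\to 1$, hence $\X^\kappa_{\tau^\kappa} = q$ with probability $\to 1$. Finally, since $\lambda^\kappa$ is defined by $\pp_p(\tau^\kappa > \lambda^\kappa) = e^{-1}$, the exponential limit law for $\tau^\kappa / \mathbb{E}_p[\tau^\kappa]$ forces $\lambda^\kappa / \mathbb{E}_p[\tau^\kappa] \to 1$ (the $(1-e^{-1})$-quantile of an Exp$(1)$ is $1$), so $\lambda^\kappa$ has the same logarithmic asymptotics as $\mathbb{E}_p[\tau^\kappa]$, giving \eqref{e.Delta} with $\Delta = 2(g(z)-g(p))$, and $\tau^\kappa/\lambda^\kappa$ then also converges to an Exp$(1)$ variable.

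One point requiring a little care, and the place I expect the only real friction, is the initial condition: the theorem allows any starting point $x < z$, whereas the quoted metastability results are typically stated for the process started at the well bottom $p$. The standard remedy is to note that from any $x < z$ the diffusion $Y^\kappa$ relaxes to a neighborhood of $p$ in time $O(1)$ (deterministic gradient flow plus a small-noise perturbation, using the Lipschitz bound (H3) and the fact that $p$ is the unique attracting equilibrium in $(-\infty, z)$), a timescale negligible compared to the exponentially large $\mathbb{E}_p[\sigma^\kappa]$; and that with overwhelming probability this relaxation happens before any attempt to cross the barrier. Hence the distribution of $\sigma^\kappa$ (normalized) and of $Y^\kappa_{\sigma^\kappa}$ started from $x$ coincide in the limit with those started from $p$. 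I would invoke this reduction explicitly, citing the relevant uniformity statements in \cite{Eulalia_book}, and otherwise the proof is essentially a bookkeeping exercise translating their theorem through the time change.
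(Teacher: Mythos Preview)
Your proposal is correct and follows essentially the same route as the paper: both reduce the statement to the classical metastability result \cite[Thm.~5.5]{Eulalia_book} via the time change $Y^\kappa(s)=\X^\kappa(\kappa^{-1}s)$ that converts the strong-drift regime into the small-noise regime. The paper's proof is terser---it simply cites that theorem and flags the minor discrepancies (hitting $q$ exactly versus a neighborhood of $q$, and the extra boundary at height $\hat g$)---while you spell out the scaling, the quantile argument linking $\lambda^\kappa$ to $\ee_p[\tau^\kappa]$, and the relaxation from a general $x<z$ to $p$ more explicitly; but the substance is the same.
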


\begin{proof}
    This is the one dimensional version of \cite[Thm 5.5]{Eulalia_book}, with two main differences.
    The first one is that we are sending the size of the drift to infinity instead of sending the strength of the the noise to zero, but it is straightforward to see using Brownian scaling that this does not change the result.
    The main difference is that their definition of $\tau^\kappa$ is slightly different: it is the first time the process comes within a certain distance of $q$. 
    But it is easy to see\footnote{By, for example, a coupling argument.} that, since we are in dimension 1, the results holds in exactly the same way if one considers instead the hitting time of $q$.
    We also included the possibility that $g(\X^\kappa_t)=\hat g$ in our definition of the hitting time $\lambda^\kappa$.
    This does not appear in the definition of the hitting time in the statement of the cited theorem, but it does appear in the same way in its proof. One can also see that $\X^\kappa_{\tau^\kappa} = q$ with probability tending to $1$ as $\kappa\to\infty$ by a simple coupling argument.
\end{proof}

We can also estimate how much time $\X^\kappa$ spends close to $p$ before the stopping time $\tau^\kappa$:

\begin{thm}\label{t.typically_close_p}
    Under the same assumptions of Theorem \ref{t.beta}, we have that for any $\delta>0$ and any $\alpha<\Delta$,
    \begin{align*}
    \lim_{\kappa \to \infty}\pp_x\left(\sup_{ 0\leq s \leq \tau^\kappa-2e^{\alpha\kappa}} \uptext{Leb}\left (\left\{ u\in [s\lambda^\kappa, s\lambda^\kappa + e^{\alpha\kappa}]: \X^\kappa_u \notin B(p,\delta)\right\} \right) >\delta e^{\alpha \kappa} \right )=0.
    \end{align*}
\end{thm}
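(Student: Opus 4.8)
The plan is to decompose the trajectory of $\X^\kappa$ up to time $\tau^\kappa$ into a succession of excursions away from a neighborhood of $p$ and to show that, in the relevant time units, each such excursion is overwhelmingly short, and that the total number of excursions occurring in any window of length $e^{\alpha\kappa}$ (in the original time scale) cannot be too large. First I would fix small radii $0<\delta'<\delta$ and introduce the successive entrance and exit times of $\X^\kappa$ relative to the balls $B(p,\delta')$ and $B(p,\delta)$; i.e., set $\sigma_0=\inf\{t\ge0: \X^\kappa_t\in B(p,\delta')\}$, and then alternately define exit times from $B(p,\delta)$ and return times to $B(p,\delta')$. This produces excursion intervals $[\rho_k,\sigma_{k+1}]$ on which $\X^\kappa$ leaves $B(p,\delta)$, separated by ``rest'' intervals inside $B(p,\delta')$. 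On the event in question, at some time $s$ the Lebesgue measure of $\{u\in[s\lambda^\kappa, s\lambda^\kappa+e^{\alpha\kappa}]: \X^\kappa_u\notin B(p,\delta)\}$ exceeds $\delta e^{\alpha\kappa}$; since the time away from $B(p,\delta)$ is exactly the union of the excursion pieces, this forces either one long excursion or many excursions inside the window.

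Next I would control each of these two contributions using the Freidlin--Wentzell estimates available in this regime (which underlie Theorem \ref{t.beta}). For a single excursion: starting from $\partial B(p,\delta')$, the probability that $\X^\kappa$ takes longer than $e^{\beta\kappa}$ (original time) to either return to $B(p,\delta')$ or to reach $q$ is exponentially small in $\kappa$ as soon as $\beta$ is strictly below the relevant barrier height; here I would choose an intermediate exponent $\alpha<\beta<\Delta$, so that a single excursion lasting as long as $e^{\beta\kappa}$ has probability $e^{-c\kappa}$, and therefore (by a union bound over the at most $\tau^\kappa/e^{\beta\kappa}$-many excursions, whose number is itself controlled since $\tau^\kappa/\lambda^\kappa$ is tight and $\lambda^\kappa=e^{\Delta\kappa+o(\kappa)}$) no excursion before time $\tau^\kappa-2e^{\alpha\kappa}$ lasts longer than $e^{\beta\kappa}$ with probability tending to $1$. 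For the count of excursions in a window: each excursion is preceded by a sojourn in $B(p,\delta')$ whose duration stochastically dominates an exponential clock with parameter $e^{-\Delta\kappa+o(\kappa)}$ (the escape rate from the well, again from the metastability analysis), so the number of excursions started in a window of length $e^{\alpha\kappa}=o(\lambda^\kappa)$ is dominated by a Poisson-type variable with mean $e^{(\alpha-\Delta)\kappa+o(\kappa)}\to0$; hence with probability tending to $1$ there are no full excursions started inside the window at all, and at most the single excursion straddling its left endpoint contributes. Combining the two estimates: with probability $\to1$, the total time outside $B(p,\delta)$ within the window is bounded by the length of at most one excursion, which is at most $e^{\beta\kappa}$. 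But we need the bound $\delta e^{\alpha\kappa}$, and $e^{\beta\kappa}\gg e^{\alpha\kappa}$, so this crude bound is not enough by itself.

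To close this gap I would not allow a whole excursion of length up to $e^{\beta\kappa}$ to sit inside the window: rather, I would use that the portion of \emph{any} excursion that can fall inside a \emph{given} window of length $e^{\alpha\kappa}$ is automatically at most $e^{\alpha\kappa}$, and then argue that on the good event the window contains at most one excursion-start and, conditionally on the window being entered by an ongoing excursion, the residual time that excursion spends outside $B(p,\delta)$ before returning is itself short — with probability $\to1$ of order $e^{\gamma\kappa}$ for some $\gamma<\alpha$, by the same single-excursion tail estimate applied to the strong Markov restart at $\partial B(p,\delta)$. Thus the total measure of $\{u: \X^\kappa_u\notin B(p,\delta)\}$ inside the window is, with probability $\to1$, bounded by (residual of the straddling excursion) $+$ $0$ (no new excursions) $\le e^{\gamma\kappa}\le \delta e^{\alpha\kappa}$ for $\kappa$ large. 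Taking the supremum over $s\le\tau^\kappa-2e^{\alpha\kappa}$ only multiplies the failure probability by the number of disjoint windows, which is at most $\tau^\kappa/e^{\alpha\kappa}=e^{(\Delta-\alpha)\kappa+o(\kappa)}$; since every per-window failure probability is $e^{-c\kappa}$ for a $c$ that we are free to take larger than $\Delta-\alpha$ by choosing $\beta$ close enough to $\Delta$ and $\delta'$ small enough, the union bound still tends to $0$. The main obstacle is precisely this last point: making the single-excursion and excursion-count tail bounds quantitatively strong enough (exponential rate strictly exceeding $\Delta-\alpha$) to survive the union bound over the $\sim e^{(\Delta-\alpha)\kappa}$ windows, which requires care in how the intermediate exponents $\gamma<\alpha<\beta<\Delta$ and the radii $\delta'<\delta$ are nested, and in extracting uniform-in-starting-point Freidlin--Wentzell estimates of the required precision from the results cited around Theorem \ref{t.beta}.
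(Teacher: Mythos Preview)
The paper does not prove this theorem at all: its entire argument is the single line ``This is a direct consequence of \cite[Thm.~5.6]{Eulalia_book}.'' So your proposal is not a variant of the paper's proof but an attempt to supply, from scratch, the Freidlin--Wentzell occupation-time estimate that the paper simply quotes. If your goal was to reproduce the paper's reasoning, you should just invoke the cited result; if your goal was to give an independent proof, then the excursion-decomposition strategy is reasonable in spirit but your sketch contains a genuine error.

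The gap is in your excursion-count step. You claim that each sojourn in $B(p,\delta')$ has duration stochastically dominating an exponential with rate $e^{-\Delta\kappa+o(\kappa)}$, and hence that the expected number of excursions started in a window of length $e^{\alpha\kappa}$ tends to zero. This conflates two very different exit problems. The rate $e^{-\Delta\kappa}$ governs escape over the \emph{full} barrier of height $\Delta = 2(g(z)-g(p))$, i.e.\ the passage from a neighborhood of $p$ all the way to $q$. The sojourns in your decomposition, however, end at the first exit from $B(p,\delta)$, which requires climbing only the local barrier $2(g(p\pm\delta)-g(p))\approx g''(p)\delta^2$. The corresponding exit rate is $e^{-c\delta^2\kappa+o(\kappa)}$, not $e^{-\Delta\kappa}$, so a window of length $e^{\alpha\kappa}$ typically contains on the order of $e^{(\alpha-c\delta^2)\kappa}$ excursions, which is exponentially \emph{large} for small $\delta$. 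Your conclusion that ``with probability tending to $1$ there are no full excursions started inside the window at all'' is therefore false, and the rest of the argument, which relies on having at most one straddling excursion to bound, collapses.

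A correct version of the excursion approach does work, but the accounting is different: one accepts that there are $\sim e^{(\alpha-c\delta^2)\kappa}$ excursions in the window and shows instead that each (non-escaping) excursion from $\partial B(p,\delta)$ back to $B(p,\delta')$ has length $O(1)$ in the original time scale (indeed $O(1/\kappa)$, since the deterministic flow returns in that time and the noise is negligible on this scale), with exponential tails strong enough to survive the union bound over excursions. The total time outside $B(p,\delta)$ in the window is then of order $e^{(\alpha-c\delta^2)\kappa}\ll\delta e^{\alpha\kappa}$. This is essentially what the cycle decomposition in \cite{Eulalia_book} (or the analogous arguments in \cite{freidlinWentzell}) carries out; reproducing it rigorously requires the uniform large-deviation estimates for short-time behavior that you allude to at the end but do not state.
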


\begin{proof}
    This is a direct consequence of \cite[Thm. 5.6]{Eulalia_book}.
\end{proof}

Now we go back to our setting of a periodic potential, and state the precise convergence result for $\caja{\X^\kappa}\ts$.

\begin{prop}\label{p.FW}
    Let $\X^{\kappa}$ be a solution of \eqref{e.SDE} with parameter $\kappa$ starting from $x\in \zz+[-1/4,1/4]$ and let $\Y$ be a Poisson process with rate $1$.
     Then, there exists a family of constants
     \[\lambda^\kappa = \exp(2\kappa + o(\kappa))\]
    such that $\caja{\X^\kappa}(\lambda^\kappa \cdot)\longrightarrow\Y(\cdot)$ in distribution for the Skorohod topology on $D[0,\infty)$.
\end{prop}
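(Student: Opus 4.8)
The plan is to transfer the one–dimensional metastability statements of Section~\ref{sec:F-W} to the periodic dynamics \eqref{e.SDE} by a localization and coupling argument, and then to build the limiting Poisson process out of the successive box–to–box transitions using the strong Markov property together with the integer–translation invariance that follows from the periodicity of $v'$. For the first transition, fix a small $\eta\in(0,\tfrac14)$ and an integer $m$, and let $g_0$ be a $C^2$ function that coincides with $v-v(0)$ on $W_0=[-\tfrac12+\eta,\,\tfrac32-\eta]$ and is extended off $W_0$ so as to have globally Lipschitz derivative, to grow superlinearly at $\pm\infty$, and to acquire no new critical points; set $g_m(x)=g_0(x-m)$. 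By the hypotheses on $v$ (nondegenerate local minima at the integers, nondegenerate local maxima at the half–integers, and, with the periodicity of $u$ and the normalization $v(0)=0$, $v(\tfrac12)=1$, the identities $v(m+\tfrac12)-v(m)=1$, $v(m+1)-v(m)=-b$, $v(m-\tfrac12)-v(m)=1+b$), the function $g_m$ satisfies hypothesis (H) with $p=m<z=m+\tfrac12<q=m+1$, $g(q)=-b<g(p)=0<g(z)=1$, and $\Delta=2(g(z)-g(p))=2$. Choosing an auxiliary height $\hat g\in\bigl(\max\{1,\,v(m-\tfrac14)-v(m)\},\,1+b\bigr)$ and then taking $\eta$ small, the level set $\{g_m=\hat g\}\cap\{x<z\}$ reduces to a single point $x^\ast\in(m-\tfrac12+\eta,\,m-\tfrac14)$, so that a solution of \eqref{e.sde-3} for $g_m$ started from any $x\in(x^\ast,m+\tfrac12)$ stays in $[x^\ast,m+1]\subset W_m$ up to $\tau^\kappa=\inf\{t:\X^\kappa_t=q\text{ or }g_m(\X^\kappa_t)=\hat g\}$, and there it couples with a solution of \eqref{e.SDE} started at the same point; moreover, since the left barrier $v(m-\tfrac12)-v(m)=1+b$ exceeds $\hat g$, the particle cannot reach the integer $m-1$ before reaching the level $\hat g$. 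Feeding this into Theorem~\ref{t.beta} (with $\lambda^\kappa$ the constant from that statement, so that $\log\lambda^\kappa/\kappa\to\Delta=2$, i.e.\ $\lambda^\kappa=e^{2\kappa+o(\kappa)}$, and with $\lambda^\kappa$ independent of $m$ by translation invariance) we conclude: for \eqref{e.SDE} started from any $x\in(x^\ast,m+\tfrac12)$, the first hitting time $T=\inf\{t:\X^\kappa_t\in\{m-1,m+1\}\}$ of a new integer satisfies $T/\lambda^\kappa\Rightarrow\mathrm{Exp}(1)$ and $\pp(\X^\kappa_T=m+1)\to1$ as $\kappa\to\infty$.

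Next, start \eqref{e.SDE} from $x\in\zz+[-\tfrac14,\tfrac14]$ and let $n=\caja{\X^\kappa}(0)$ be the initial box; we will show that $\caja{\X^\kappa}(\lambda^\kappa\,\cdot)$ converges to the rate–$1$ Poisson process started from $n$, which is the assertion of the proposition. By continuity of $\X^\kappa$, the successive jump times $0<T_1<T_2<\cdots$ of the step process $\caja{\X^\kappa}$ are exactly the successive first hitting times of a new integer, with $\caja{\X^\kappa}(T_k)=\X^\kappa_{T_k}$; in particular $T_1=\inf\{t:\X^\kappa_t\in\{n-1,n+1\}\}$. Put $A_K=\{\X^\kappa_{T_k}=n+k\text{ for all }k\le K\}$. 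On $A_k$ the particle sits at the integer $n+k$ at time $T_k$, so by the strong Markov property and integer–translation invariance, conditionally on $A_k$ and on the trajectory up to $T_k$, the increment $T_{k+1}-T_k$ has the law of $T_1$ for \eqref{e.SDE} started at $0$, while $\pp(A_{k+1}\mid A_k)=\pp_0(\X^\kappa_{T_1}=1)$; combined with the first transition (applicable at each box since $x^\ast<m-\tfrac14$) this gives $\pp(A_k)\to1$ for each fixed $k$. A short induction on $K$ then shows that for every fixed $K$ the rescaled holding times $\bigl((T_k-T_{k-1})/\lambda^\kappa\bigr)_{1\le k\le K}$ (with $T_0=0$) converge jointly in distribution to i.i.d.\ $\mathrm{Exp}(1)$ random variables, while $\caja{\X^\kappa}(T_k)=n+k$ for all $k\le K$ with probability tending to $1$.

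A càdlàg $\zz$–valued step function with i.i.d.\ $\mathrm{Exp}(1)$ holding times all of whose jumps equal $+1$ is precisely a rate–$1$ Poisson process, so the previous step identifies, on any compact time interval, the finite–dimensional limit of $\caja{\X^\kappa}(\lambda^\kappa\,\cdot)$ with that Poisson process started at $n$; here one uses the a.s.\ continuity at the limit of the map sending the jump times to the step path, which holds because the partial sums $\Gamma_k$ of i.i.d.\ exponentials have no atoms. Tightness in $D[0,\infty)$ comes from the same input: for fixed $R$, $\pp\bigl(\caja{\X^\kappa}\text{ makes at least }M\text{ jumps in }[0,R\lambda^\kappa]\bigr)=\pp\bigl(\sum_{k=1}^{M}(T_k-T_{k-1})/\lambda^\kappa\le R\bigr)\to\pp(\Gamma_M\le R)$, a quantity that tends to $0$ as $M\to\infty$; so no jumps accumulate in finite time, uniformly in $\kappa$. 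Combining this with the finite–dimensional convergence yields $\caja{\X^\kappa}(\lambda^\kappa\,\cdot)\Rightarrow\Y$ in $D[0,\infty)$.

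I expect the main obstacle to be the first transition: one must choose the auxiliary potential $g_m$, the height $\hat g$, and the window width $\eta$ so that simultaneously $g_m$ genuinely satisfies hypothesis (H), the coupling between the $g_m$–diffusion \eqref{e.sde-3} and \eqref{e.SDE} is exact all the way up to $\tau^\kappa$ (which requires both $x^\ast$ and $q$ to lie inside $W_m$ and the starting point to lie to the right of $x^\ast$), and the event that $\caja{\X^\kappa}$ makes a \emph{leftward} jump is contained in the negligible event $\{\X^\kappa_{\tau^\kappa}\neq q\}$---this last point being exactly where the asymmetry of the two barriers bounding a well (height $1$ on the right versus $1+b$ on the left) is exploited. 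Once this is in place, the renewal argument and the passage to Skorohod convergence are routine, the only care needed being to propagate the conditioning on the events $A_k$ through the induction.
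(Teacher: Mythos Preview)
Your argument is correct and follows essentially the same route as the paper's proof: localize $v$ to a double-well potential $g$ satisfying hypothesis~(H), couple the $g$-diffusion with \eqref{e.SDE} so they agree up to the exit time $\tau^\kappa$, read off the exponential limit and the right-jump property from Theorem~\ref{t.beta}, and then iterate via the strong Markov property and integer-translation invariance to obtain Skorohod convergence. The only cosmetic differences are that the paper uses the window $[-\tfrac12+\eta,1+\eta]$ and sets $\hat g=v(-\tfrac12+\eta)$ directly (rather than your $[-\tfrac12+\eta,\tfrac32-\eta]$ and $\hat g\in(\max\{1,v(-\tfrac14)\},1+b)$), and that it packages the passage from ``convergence of holding times and jump directions'' to ``Skorohod convergence'' into Lemma~\ref{l.increments_enough_Skorohod}, whereas you spell this out by hand; your coupling step is in fact slightly cleaner than the paper's, since you verify explicitly that the localized diffusion stays in $[x^\ast,m+1]\subset W_m$ up to $\tau^\kappa$ and hence coincides exactly with \eqref{e.SDE} there.
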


Before turning to the proof, it will be useful to state a simple condition in our setting for convergence of Markov chains in the standard Skorohod topology.
Let $(X_t)_{t\geq 0}$ be a càdlàg continuous time Markov process in $\zz^d$ with jumps which are such that $\|X_t-X^-_t\|=1$. 
Let $0<\sigma_1<\sigma_2<\dotsm$ denote the jumping times of $X$, let $\tau_i=\sigma_{i}-\sigma_{i-1}>0$ be the time elapsed between those jumps (here $\sigma_0=0$), and write $x_i=X_{\sigma_i}-X_{\sigma_i}^-$ for the value of the $i$-th jump.
Note that in order to recover $X$ it suffices to know the sequence $(\tau_i,x_i)_{i\in \nn}$.

\begin{lem}\label{l.increments_enough_Skorohod}
    Let $(X^n_t)_{t\geq 0,n\in \nn}$ be a sequence of càdlàg continuous time Markov processes in $\zz^N$ with jumps which also satisfy $\|X^n_t-(X^n)^-_t\|=1$. 
    Assume that the associated sequence $(\tau^n_i,x^n_i)_{i\in \nn}$ converges to $(\tau_i,x_i)_{i\in \nn}$ in the product topology, and assume furthermore that
    \begin{align*}
        \textstyle\sum_{i\in\nn} \tau_i =\infty\ \  \text{ and  } \ \ \tau_i >0 \ \ \ \forall i\in \nn.
    \end{align*}
    Then $X^n$ converges to $X$ in the Skorohod topology on $D[0,\infty)$, where $X$ is the càdlàg process associated to $(\tau_i,\sigma_i)_{i\in \nn}$.
\end{lem}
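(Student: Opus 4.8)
\emph{Plan.}
The statement is really the assertion that the path-reconstruction map $\Psi$, which sends a jump sequence $(\tau_i,x_i)_{i\in\nn}$ (together with an initial value, common and deterministic in all our applications, hence suppressed) to the associated càdlàg path, is continuous from the product topology on jump sequences to the Skorohod topology on $D[0,\infty)$ at every sequence with $\tau_i>0$ for all $i$ and $\sum_i\tau_i=\infty$. First I would set $\sigma_k=\sum_{i\le k}\tau_i$ and $\sigma_k^n=\sum_{i\le k}\tau_i^n$, so that $X^n$ is constant on each interval $[\sigma_k^n,\sigma_{k+1}^n)$ with value $X_0+x_1^n+\dots+x_k^n$, and $X$ is described analogously. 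Since $\sum_i\tau_i=\infty$, for every $T>0$ outside the countable set $\{\sigma_i:i\ge1\}$ there is a \emph{finite} index $K=K(T)$ with $\sigma_K<T<\sigma_{K+1}$; it suffices to establish convergence of the restrictions to $[0,T]$ for all such $T$, because these $T$ are dense and, by a standard fact about the Skorohod topology on $D[0,\infty)$, convergence there is equivalent to convergence of the restrictions to $[0,T]$ for a dense set of $T$ that are continuity points of the limit path.

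Next, fix such a $T$. From $\tau_i^n\to\tau_i$ for each $i$ we get $\sigma_k^n\to\sigma_k$ for each $k$, so for $n$ large $0<\sigma_1^n<\dots<\sigma_K^n<T<\sigma_{K+1}^n$; and since the jump values lie in the \emph{discrete} set $\{x\in\zz^N:\|x\|=1\}$, the convergence $x_i^n\to x_i$ forces $x_i^n=x_i$ for all $i\le K$ once $n$ is large. Hence, for $n$ large, $X^n|_{[0,T]}$ has exactly the jumps $(\sigma_i^n,x_i)_{i=1}^K$, i.e.\ the same jump values in the same order as $X|_{[0,T]}$, only at the perturbed times $\sigma_i^n$. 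I would then take $\lambda_n\colon[0,T]\to[0,T]$ to be the increasing, piecewise-linear bijection fixing $0$ and $T$ and sending $\sigma_i\mapsto\sigma_i^n$ for $i=1,\dots,K$; one checks directly that $X^n\circ\lambda_n\equiv X$ on $[0,T]$, so the Skorohod distance on $[0,T]$ between $X^n$ and $X$ is at most $\sup_{t\in[0,T]}|\lambda_n(t)-t|=\max_{1\le i\le K}|\sigma_i^n-\sigma_i|\to0$. Assembling over $T$ yields $X^n\to X$ in $D[0,\infty)$ in the deterministic setting.

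In the applications the jump data is random and converges only in distribution; there one passes to a common probability space via Skorohod's representation theorem and applies the pathwise argument above (the almost-sure countability of the discontinuity set of the limit path ensuring one is at a continuity point of $\Psi$), or equivalently invokes the continuous mapping theorem. I expect no genuine obstacle here: the only points needing care are exactly where the two extra hypotheses enter---discreteness of $\zz^N$ together with $\tau_i>0$, used to make the jump values match exactly (if some $\tau_i$ vanished, two jumps would collapse and $J_1$ convergence could fail), and $\sum_i\tau_i=\infty$, used to reduce to compact time intervals---plus the routine but slightly fiddly passage from $D[0,T]$ to $D[0,\infty)$; the statement is otherwise a soft continuity property of the reconstruction map.
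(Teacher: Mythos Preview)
Your argument is correct and is exactly the standard one. The paper itself does not prove this lemma: it simply states ``The proof of this is standard, so we omit it.'' Your sketch supplies precisely the standard argument---reconstruct the path from its jump data, use discreteness of the jump set and positivity of the $\tau_i$ to make jump values match exactly for large $n$, build a piecewise-linear time change matching the finitely many jump times before $T$, and pass from $D[0,T]$ to $D[0,\infty)$ via continuity points of the limit; the distributional version then follows by the continuous mapping theorem or Skorohod representation. There is nothing to compare against in the paper, and no gap in what you wrote.
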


The proof of this is standard, so we omit it.

\begin{proof}[Proof of Proposition \ref{p.FW}]
   Without loss of generality we may assume that $x\in [-1/4,1/4]$.
   Fix a constant $\eta>0$ so that $\hat g\coloneqq v(-1/2+\eta)>v(1/2)$ (this is possible by continuity since $v(-1/2)>v(0)$) and consider a function $g$ which is equal to $v$ in $[-1/2+\eta, 1+\eta]$ and is such that it satisfies hypothesis (H) with $p=0$, $q=1$ and $z=1/2$ (the latter is also clearly possible: the definition of $\eta$ ensures the right ordering for the heights at the critical points while, for example, we can let $g$ be defined as suitable convex parabolas on $(-\infty,-1/2]$ and $[1+2\eta,\infty)$, with $C^2$ monotone interpolations in $(-1/2,-1/2+\eta)$ and $(1+\eta,1+2\eta)$).
   Let $\wt\X^\kappa$ be a solution of \eqref{e.sde-3}.
   We choose the constants $\lambda^\kappa$ to be the ones which appear in Theorem \ref{t.beta} for this diffusion.
   Note that here $\Delta=2(g(1/2)-g(0))=2$, so $\lambda^\kappa=e^{2\kappa+o(\kappa)}$ as prescribed.
   
   Now let $\tau^\kappa_{x}$ be the hitting time of $\X^\kappa$ to $x$, and define $\tilde\tau^\kappa_x$ similarly in terms of $\wt\X^\kappa$.
   Let $\tau_{x,y}=\tau_{x}\wedge \tau_{y}$ and $\tilde\tau_{x,y}=\tilde\tau_{x}\wedge \tilde\tau_{y}$.
   Thanks to Lemma \ref{l.increments_enough_Skorohod}, it is enough to show that that $\tau_{-1,1}/\lambda^\kappa$ converges to an exponential random variable with parameter $1$ and that $\pp(\tau_{-1,1} = \tau_{-1})\xrightarrow[\kappa\to\infty]{}0$.
   To this end we couple $\X^\kappa$ and $\wt \X^\kappa$ starting from the same point and using the same Brownian motion. 
   From Theorem \ref{t.beta} we know that $\tilde \tau^\kappa_{-1,1}/\lambda^\kappa$ converges to an exponential random variable with parameter $1$, and that $\pp(\tilde \tau^\kappa_{-1,1}=\tilde \tau^\kappa_{1})\xrightarrow[\kappa\to\infty]{}0$.
   From this we get the desired conditions, because the probability that $\tau_{-1,1}$ is different from $\tilde \tau_{-1,1}$ is bounded by
   \begin{align*}
    \pp\big(\wt\X^\kappa (\tilde \tau^\kappa_{-1/2+\eta, 1})=-1/2+\eta\big),
   \end{align*}
   which also converges $0$ also thanks to Theorem \ref{t.beta}. 
\end{proof}

\begin{rem}
    Note that the result in Proposition \ref{p.FW} still holds for finitely many independent copies of $(\X^\kappa_i)_{i=1}^N$ starting at different $x_i$'s.
    This is true thanks to Lemma \ref{l.increments_enough_Skorohod} and the fact that the processes $(\Y_i)_{i=1}^N$ almost surely never jump at the same time.
\end{rem}

\subsection{Proof of Theorem \ref{t.main}}\label{s.pf-main}

In order to prove Theorem \ref{t.main}, the basic idea will be to couple the algorithms used to build the mBM and the mRW by using systems of independent copies of the solution of \eqref{e.SDE} and of independent Poisson processes, which are in turn coupled so that they remain close to each other.
These two systems can be coupled in such a way thanks to Proposition \ref{p.FW}. 

The first thing which we need to prove is that different copies of $\X^\kappa$, started at sufficiently close times and positions, remain close in distribution.
We do this in the next result, based again on coupling.

Before stating the result we note that, since $v \in C^\infty$ and it has a minimum at $0$, we know that there exists an $\eta>0$ such that $v''(y)>0$ for all $y \in [-2\eta,2\eta]$ and such that $v(-1/2+ \eta)>1$.
We will employ this choice of $\eta$ throughout the rest of this section.

\begin{lem}\label{l.coupling_different_initial_conditions}
    Let $x, \bar x \in \rr^N$ be such that $x_i,\bar x_i \in \zz + (-1/4,1/4)$ and $|x_i-\bar x_i|\leq 1/2$ for each $i$.
    Assume that $\X^\kappa$, respectively $\bar\X^\kappa$, are $N$ independent solutions of \eqref{e.SDE} with initial values $x$, respectively $\bar x$.
    Then there exists a function $g\!:\rr^+\times \rr^+ \longrightarrow [0,1]$ and a coupling of $(\X^\kappa, \bar\X^\kappa)$ depending on $x$, $\bar x$ and a parameter $h>0$, whose law we denote by $\pp_{x,\bar x,h}$, so that for all $0<\alpha<2$ and all $\delta>0$,
    \begin{equation}\label{e.inf-coupl}
    \inf_{|h|\leq \delta}\inf_{x,\bar x}\,\pp_{x,\bar x,h}( \X^\kappa(t)= \bar\X^\kappa(t+h \lambda^\kappa) \text{ for all } t> e^{\alpha\kappa})\geq g(\kappa,\delta)
    \end{equation}
    and
    \begin{align}\label{e.function_g}\lim_{\delta \to 0} \limsup_{\kappa \to \infty} g(\kappa,\delta)=\lim_{\kappa \to \infty} g(\kappa,e^{-(2-\alpha)\kappa})=1.\end{align}
\end{lem}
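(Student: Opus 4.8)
The plan is to build the coupling one coordinate at a time, using that the $N$ coordinates of $\X^\kappa$ (and of $\bar\X^\kappa$) are independent, and to combine the metastability results of Section~\ref{sec:F-W} with a short ``relax, then reflect'' coupling inside a single well. Fix a coordinate $i$ and set $m_i=\lfloor x_i+\tfrac12\rfloor$; since $x_i,\bar x_i\in\zz+(-\tfrac14,\tfrac14)$ and $|x_i-\bar x_i|\le\tfrac12$ one checks at once that $\lfloor\bar x_i+\tfrac12\rfloor=m_i$ as well, so both points lie in $B(m_i,\tfrac14)$, inside a single well of $v$. Because the drift $v'$ (and hence $v''$) is $1$-periodic, the law of $\X_i^\kappa-m_i$ depends only on the offset $x_i-m_i\in(-\tfrac14,\tfrac14)$, so it will be enough to obtain every estimate below uniformly over offsets in the compact set $[-\tfrac14,\tfrac14]$; I will also take $h\ge0$, the case $h\le0$ being symmetric, and shrink $\eta$ (from just before the lemma, so $v''>0$ on $[-2\eta,2\eta]$) so that $\eta<\tfrac14$. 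For the $i$-th coordinate I would couple as follows. First run $\bar\X_i^\kappa$ from $\bar x_i$ for time $h\lambda^\kappa$ with its own noise, set $z_i=\bar\X_i^\kappa(h\lambda^\kappa)$ and $\Z_i(\cdot)=\bar\X_i^\kappa(h\lambda^\kappa+\cdot)$, and run $\X_i^\kappa$ from $x_i$ with an independent Brownian motion. If $z_i\notin B(m_i,\tfrac14)$, let $\X_i^\kappa$ and $\Z_i$ proceed independently (coordinate $i$ ``fails''). Otherwise run them with independent noises until the first time $\sigma$ at which both lie in $B(m_i,\eta/8)$, then for $s\ge\sigma$ couple $\X_i^\kappa$ and $\Z_i$ by reflection until they meet, and use the same driving Brownian motion thereafter; declare coordinate $i$ failed also if $\sigma>1$, or if they have not met by $\sigma+1$, or if one of them leaves $B(m_i,2\eta)$ before meeting. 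Since reflection and synchronous coupling preserve the law of each driving Brownian motion, this defines a genuine coupling of $(\X^\kappa,\bar\X^\kappa)$ with the correct marginals, whose law is the required $\pp_{x,\bar x,h}$; I would then simply \emph{define} $g(\kappa,\delta)$ to be the infimum in \eqref{e.inf-coupl}, so that \eqref{e.inf-coupl} holds by fiat and only \eqref{e.function_g} is at stake.

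On the complement of all the failure events, $\X_i^\kappa$ and $\Z_i$ coalesce by time $\sigma+1\le2<e^{\alpha\kappa}$ (for $\kappa$ large, since $\alpha>0$), hence $\X_i^\kappa(t)=\bar\X_i^\kappa(t+h\lambda^\kappa)$ for all $t>e^{\alpha\kappa}$; so I would reduce the problem to bounding the per-coordinate failure probability, uniformly over offsets and over $h\in[0,\delta]$, by $c(\delta)+o_\kappa(1)$ with $c(\delta)\to0$ as $\delta\to0$, and moreover by $o_\kappa(1)$ when $\delta=e^{-(2-\alpha)\kappa}$. This needs three estimates. \textbf{(i)} $\pp(z_i\notin B(m_i,\tfrac14))\le c(\delta)+o_\kappa(1)$: by Proposition~\ref{p.FW} and the exponential law of the well-exit time (of order $\lambda^\kappa$, cf.\ Theorem~\ref{t.beta}), the probability that $\bar\X_i^\kappa$ leaves its well before time $h\lambda^\kappa$ tends to $1-e^{-h}\le h\le\delta$ --- the only place $h\le\delta$ is used; given it does not, Theorem~\ref{t.typically_close_p} applied on the window $[h\lambda^\kappa-e^{\alpha''\kappa},h\lambda^\kappa]$ (with $\alpha''>0$ small, and noting that before leaving its well $\bar\X_i^\kappa$ agrees with the double-well diffusion of the proof of Proposition~\ref{p.FW}) yields a time $u^*$ in the last $o(e^{\alpha''\kappa})$-portion of that window with $\bar\X_i^\kappa(u^*)$ within $\delta''$ of $m_i$, and a comparison of the confined diffusion near $m_i$ with an Ornstein--Uhlenbeck process of mean-reversion rate $\ge\kappa c_0$, $c_0:=\min_{[-2\eta,2\eta]}v''>0$, shows that $\bar\X_i^\kappa$ moves by at most $\eta/2$ on $[u^*,h\lambda^\kappa]$ outside an event of probability $o_\kappa(1)$; thus $z_i\in B(m_i,\eta/2)\subseteq B(m_i,\tfrac14)$ with the stated probability. \textbf{(ii)} $\pp(\sigma>1)=o_\kappa(1)$: both $\X_i^\kappa,\Z_i$ start in $B(m_i,\tfrac14)$, from where the strong drift $-\kappa v'$ drives each into $B(m_i,\eta/8)$ within time $O(\log\kappa/\kappa)$ without leaving the well --- again a Girsanov/Ornstein--Uhlenbeck estimate. \textbf{(iii)} While both particles are in $B(m_i,2\eta)$, the reflection-coupled difference $D_s=\X_i^\kappa(s)-\Z_i(s)$ solves $dD_s=-\kappa v''(\xi_s)D_s\,ds+2\,dB'_s$ with $\kappa v''(\xi_s)\ge\kappa c_0$, an Ornstein--Uhlenbeck-type process with mean-reversion $\ge\kappa c_0$ and doubled noise; from $|D_\sigma|\le\eta/4$ it reaches size $O(\kappa^{-1/4})$ in time $O(\log\kappa/\kappa)$ and hits $0$ shortly after, while neither particle leaves $B(m_i,2\eta)$ on this (length-$\le1$) interval, so $D$ vanishes before $\sigma+1$ with probability $1-o_\kappa(1)$.

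A union bound over the $N$ coordinates then gives that the event in \eqref{e.inf-coupl} holds with probability $\ge1-N(c(\delta)+o_\kappa(1))$, uniformly in $x,\bar x$ and $|h|\le\delta$, so $\limsup_{\kappa\to\infty}g(\kappa,\delta)\ge1-N\,c(\delta)\to1$ as $\delta\to0$, which is the first limit in \eqref{e.function_g}. For the second, take $\delta=e^{-(2-\alpha)\kappa}$: then $|h|\lambda^\kappa\le e^{-(2-\alpha)\kappa}\lambda^\kappa=e^{\alpha\kappa+o(\kappa)}=o(\lambda^\kappa)$, so even the probability in (i) that $\bar\X_i^\kappa$ leaves its well before time $h\lambda^\kappa$ tends to $0$, the per-coordinate failure probability is $o_\kappa(1)$, and $g(\kappa,e^{-(2-\alpha)\kappa})\to1$. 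The hard part is estimate (i): showing that the position of a single diffusion at the \emph{fixed}, exponentially large time $h\lambda^\kappa$ sits near the bottom of its well with high probability. Morally this is the concentration of the quasi-stationary measure around the minimum, but since Section~\ref{sec:F-W} only provides trajectory-level metastability, one must pass from ``near the minimum for most of a short window'' to ``near the minimum at the window's right endpoint'' via the continuity / Ornstein--Uhlenbeck comparison indicated above. Estimates (ii)--(iii) are routine single-well computations, and all the required uniformity in $x,\bar x,h$ is delivered by the periodicity of $v'$.
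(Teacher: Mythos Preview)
Your approach is correct in outline and can be made rigorous, but it is considerably more elaborate than the paper's proof; two simplifications are worth pointing out. First, once both particles are in $[-\eta,\eta]$, the paper uses an \emph{independent} (not reflection) coupling and simply observes that on $\{\X^\kappa,\bar\X^\kappa\in[-2\eta,2\eta]\}$ the drift of $U_t=\X^\kappa-\bar\X^\kappa$ is $-\kappa v''(\xi)U_t\le 0$ (because $v''>0$ there), whence $U_t\le U_0+\sqrt2\,W_t$; this one-line Brownian comparison replaces your whole OU analysis in~(iii). Second, and more importantly, the paper sidesteps your estimate (i) entirely: rather than pinning down $\bar\X^\kappa$ at the \emph{specific} time $h\lambda^\kappa$, it runs $\X^\kappa(\cdot)$ and $\bar\X^\kappa(h\lambda^\kappa+\cdot)$ in parallel and uses Theorem~\ref{t.typically_close_p} to say that over a window of length $\tfrac12 e^{\alpha\kappa}$ each spends more than half the time in $[-\eta,\eta]$, so by pigeonhole there is a common time $\sigma\le\tfrac12 e^{\alpha\kappa}$ at which both are there. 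The only $h$-dependent cost is the probability that $\bar\X^\kappa$ has already left its well by time $\sim h\lambda^\kappa$, which is $O(\delta)$ by Theorem~\ref{t.beta}. Your route through (i) is essentially reproving a fixed-time concentration statement (the content of Lemma~\ref{l.typical_time_close_to_integers}, which in the paper is deduced \emph{from} the present lemma), and your ``OU comparison on $[u^*,h\lambda^\kappa]$'' only works once $\alpha''$ is chosen below the Freidlin--Wentzell exit threshold $2v(\eta/2)$ from $[-\eta/2,\eta/2]$---a constraint you leave implicit. The pigeonhole argument avoids all of this and makes the dependence on $\delta$ transparent.
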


\begin{proof}
Since the coordinates of $\X^\kappa$ and $\bar\X^\kappa$ are independent, it suffices to show the result for $N=1$, in which case we may assume that $-1/4\leq \bar x\leq  x\leq 1/4$. 
It is enough to find a coupling between $\X^\kappa(\cdot)$ and $\bar\X^\kappa(\cdot + h \lambda^\kappa)$ such that the two diffusions intersect at a time smaller than $e^{\alpha\kappa}$ with high probability (and uniformly in $h,x,\bar x$), since we can evolve them together afterwards.

Consider first the case $h=\delta = 0$ and $x,\bar x \in [-\eta,\eta]$, with $\bar x < x$ and $\eta$ as specified before the statement of the lemma.
We will use the coupling
\begin{align}\label{e.coupling_independiente}
\d\X^\kappa(t)= -\kappa v'(\X^\kappa(t)) \d t + \d B_t,\qquad
\d\bar \X^\kappa(t)= -\kappa v'(\bar \X^\kappa(t)) \d t + \d\bar B_t,
\end{align}
where $B$ and $\bar B$ are two independent Brownian motions\footnote{One can (slightly) improve the speed of convergence by choosing the coupling $(B,\bar B)$ more carefully, but the independent coupling is sufficient for our purpuses and will, in fact, be useful also later.}.
Define $\tau$ to be the first time when either $\X^\kappa$ or $\bar\X^\kappa$ exits $[-2\eta,2\eta]$ , 
let $U_t= \X^\kappa(t\wedge\tau) - \bar\X^\kappa(t\wedge\tau)$, and observe that (by our choice of $\eta$)
\begin{align*}
U_t \leq x-\bar x+ \sqrt{2}W_{t\wedge \tau},
\end{align*}
where $W$ is a Brownian motion.
Then, under this coupling, we have
\begin{align*}
  \pp(\X^\kappa(t)=\bar\X^\kappa(t)&\;\uptext{for some}\;0\leq t\leq e^{\alpha\kappa})=\pp(U_s=0\;\text{for some}\;t\in [0,e^{\alpha \kappa}])\\
  &\geq \pp\big (W_t =\tfrac{1}{\sqrt{2}}(\bar x - x) \;  \text{ for some }0<t<e^{\kappa(\alpha\wedge\beta) }\big) - \pp\big (\tau< e^{\kappa\beta}\big)
\end{align*}
for any $\beta>0$.
The first term on the right hand side clearly converges to $1$ as $\kappa\nearrow \infty$, and it does so uniformly in $x,\bar x\in[-\eta,\eta]$.
And if we choose $\beta<2v(\eta)$ then the second term goes to $0$, also uniformly in $x,\bar x\in[-\eta,\eta]$, thanks to the same argument as the one used in the proof of Proposition \ref{p.FW}: we couple $\X^\kappa$ with a diffusion with a drift $\tilde v$ that equals $v$ in $[-\eta,\eta]$ and satisfies hypothesis (H), and conclude by applying Theorem \ref{t.beta}.
From this, we obtain that, uniformly in $x,\bar x\in[-\eta,\eta]$,
\begin{equation}\label{e.coupling_uniform_starting_same_time}
  \pp(\X^\kappa(t)=\bar\X^\kappa(t)\;\uptext{for some}\;0\leq t\leq e^{\alpha\kappa})\xrightarrow[\kappa\to\infty]{}1.
\end{equation}

Now we turn to the general case.
By symmetry it is enough compute the infimum in \eqref{e.inf-coupl} over $h\geq0$, so we want construct a coupling where $\X^\kappa$ has started $0<h\lambda^\kappa\leq\delta \lambda^\kappa$ units of time before $\bar\X^\kappa$, and where $\X^\kappa$ and $\bar\X^\kappa$ now start at points $x,\bar x\in[-1/4,1/4]$.
Take $t\in [0,h\lambda^\kappa]$, couple $\X^\kappa$ and $\bar\X^\kappa$ independently, and define a new stopping time $\sigma$ to be the first time $t$ such that $\X^\kappa(t)$ and $\bar\X^\kappa(t + h\lambda^\kappa)$ both belong to $[-\eta,\eta]$. 
We will show that $\pp(\sigma\geq \frac12e^{\alpha \kappa} ) $ is upper bounded by some function $\tilde g(\kappa,\delta)$ satisfying \eqref{e.function_g}.
The result then follows (after adjusting $\alpha$ slightly) by using the coupling introduced in the previous case to run $\X^\kappa(t)$ and $\bar\X^\kappa(t + h\lambda^\kappa)$ for $t\geq\sigma$. 

In order to obtain the desired estimate for  $\pp(\sigma\geq \frac{1 }{2 }e^{\alpha \kappa})$, we note first that it is bounded by
\begin{multline*}
\pp\!\left (\X^\kappa(t) \notin [-\tfrac 12 +\eta, 1]\;\text{or}\;
\bar\X^\kappa(t) \notin [-\tfrac 12 +\eta , 1]\;\text{for some}\;t\in [0,4h \lambda^\kappa]\right ) \\
+ \pp\!\left (\sigma \geq \tfrac{1 }{2 }e^{\alpha \kappa}\;\uptext{and}\; \X^\kappa(t), \bar\X^\kappa(t)\in [-\tfrac 12 + \eta, 1]\;\uptext{for all}\;t\in [0,4h \lambda^\kappa] \right ).
\end{multline*}
For large enough $\kappa$, the first term is uniformly bounded by $3(1- e^{-2\delta})$, again by a simple comparison argument with the Friedlin-Wentzell regime for a double-well potential and Theorem \ref{t.beta}. 
The second term is upper bounded by the probability that in the time interval $[0,4h\lambda^\kappa]$ either $\X^\kappa$ or $\bar\X^\kappa(\cdot + h\lambda^\kappa)$ spends a proportion of time bigger than $1/2$ outside the interval $[-\eta,\eta]$  while not exiting $[-1/2 + \eta,1]$. This is uniformly small thanks to Theorem \ref{t.typically_close_p}.
\end{proof}

\begin{rem}\label{r.special_coupling}
    Lemma \ref{l.coupling_different_initial_conditions} as stated above will be a key tool for what follows. 
    However, at the end of the proof of Theorem \ref{t.main} we will need a slightly improved version of the coupling, in the case of equal starting times, which will allow us to construct, in the same probability space, copies $\hat\X^{\kappa,x}$ of the process $\X^\kappa$ started at all initial conditions $x\in [-1/4,1/4]$ together with the process $\X^\kappa$ started at $0$.
    The coupling goes as follows.
    Define first $\X^\kappa$ to be a solution of \eqref{e.SDE} started at $0$ using a Brownian motion $B$. 
    Consider a second a Brownian motion $\dot B$, which is independent of $B$, and use it to construct solutions $\dot \X^{\kappa,x}$ of \eqref{e.SDE} started at every $x\in[-1/4,1/4]$.
    Define $\tau^x$ as the first time $t$ when $\dot \X^{\kappa,x}_t= \X^{\kappa}_t$ and define a process  $\hat \X^{\kappa,x}(t)$ to be $\dot \X^{\kappa x}(t)$ for $t\leq\tau^x$ and $\X^{\kappa}(t)$ for $t>\tau^x$. 
    Then $\hat\X^{\kappa,x}$ indeed solves \eqref{e.SDE} for $v$, starting at $x$, and the estimate \eqref{e.inf-coupl} (with $\delta=0$) applies in the same way for each pair $(\X^\kappa,\hat{\X}^{\kappa,x})$.
\end{rem}

We will need the following lemma, which states that $\X^\kappa$ and $\caja{\X^\kappa}$ exit the Weyl chamber at close enough times with high probability.

\begin{lem}\label{l.both_hit}
Let $\X^\kappa= (\X_i^\kappa)_{i=1}^N$ be a system of $N$ independent solutions of \eqref{e.SDE} with a given choice of $\kappa$ and with initial conditions $x^{\kappa}(0) \in \Omega_N+[-1/4,1/4]^N$.
Define the stopping times
\begin{align*}
\tau^\kappa= \inf\{t>0: \X^\kappa \notin \Omega_N\}\qqand\caja{\tau^\kappa}= \inf\{t>0: \caja{\X^\kappa} \notin \Omega_N\}.
\end{align*}
Then for any $\alpha>0$ we have
\begin{align*}
\lim_{\kappa\nearrow \infty} \pp_x\!\left(\big|\ttsm\caja{\tau^\kappa}-\tau^\kappa\big|\geq e^{\alpha\kappa}\right)  = 0.
\end{align*}
\end{lem}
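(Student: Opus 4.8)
The plan is to compare the two exit times using the metastability picture developed in Section \ref{sec:F-W}. The key observation is that a collision of the continuous system $\X^\kappa$, i.e. an event $\X^\kappa_i(t)=\X^\kappa_{i+1}(t)$, must happen when some coordinate is in the process of jumping between boxes: if no coordinate is near a half-integer, then all coordinates are pinned near distinct integers and the ordering is determined by $\caja{\X^\kappa}$. More precisely, on the event that $\caja{\X^\kappa}(t)\in\Omega_N$ strictly (all integer labels distinct with gaps $\geq 1$) and every coordinate $\X^\kappa_i(t)$ lies in $B(\caja{\X_i^\kappa}(t),\eta)$ for the $\eta$ fixed before Lemma \ref{l.coupling_different_initial_conditions}, we have $\X^\kappa(t)\in\Omega_N$ as well, since $\eta<1/2$. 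Conversely, once $\caja{\X_i^\kappa}=\caja{\X_{i+1}^\kappa}$ for some $i$ (the labels collide), the two coordinates are both near the same integer and $\X^\kappa$ has already left $\Omega_N$ (or will very shortly, within a bounded time that is negligible at scale $e^{\alpha\kappa}$). This gives the two one-sided comparisons:
\begin{itemize}
\item If $\tau^\kappa<\caja{\tau^\kappa}-e^{\alpha\kappa}$, then at the collision time $\tau^\kappa$ all integer labels are still distinct, so the collision of $\X^\kappa$ happens while some coordinate is away from its box center — i.e. during a transition — and moreover this transition does not lead to a label collision for at least $e^{\alpha\kappa}$ more units of time. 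Both are atypical: transitions are rare (they take negligible time compared to the $\lambda^\kappa$-scale residence times) and a transition that brings two coordinates together should produce a label collision almost immediately.
\item If $\caja{\tau^\kappa}<\tau^\kappa-e^{\alpha\kappa}$, then at the label-collision time the continuous coordinates $\X_i^\kappa,\X_{i+1}^\kappa$ are in a common box but have not yet crossed, and stay uncrossed for $e^{\alpha\kappa}$ more units of time; again atypical by the same transition-time estimates.
\end{itemize}

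Concretely I would proceed as follows. First, by Proposition \ref{p.uniform_separation}-type reasoning together with Proposition \ref{p.FW} and the remark following it, reduce to a finite time horizon: with probability close to $1$ the exit time $\caja{\tau^\kappa}$ (hence also $\tau^\kappa$, once the comparison is in place) is of order $\lambda^\kappa$, so it suffices to control what happens on $[0,C\lambda^\kappa]$ for fixed large $C$. Second, discretize: cover $[0,C\lambda^\kappa]$ by $O(C\lambda^\kappa e^{-\alpha\kappa})$ intervals of length $e^{\alpha\kappa}$; on each such interval use Theorem \ref{t.typically_close_p} (with $\alpha'$ slightly larger than $\alpha$, still $<\Delta=2$) to say that, with high probability uniformly over the interval, each coordinate $\X_i^\kappa$ spends all but a $\delta$-fraction of the interval inside $B(\caja{\X_i^\kappa},\delta)$. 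Since the number of intervals is only $e^{o(\kappa)}\cdot\lambda^\kappa e^{-\alpha\kappa}$ and the failure probability on each is super-polynomially small in the relevant sense (it $\to 0$ as $\kappa\to\infty$ after the $\lambda^\kappa$-scaling is absorbed — here one must be a little careful, using that $\lambda^\kappa e^{-\alpha\kappa}=e^{(2-\alpha)\kappa+o(\kappa)}$ and Theorem \ref{t.typically_close_p}'s convergence can be quantified, or alternatively re-running the argument of Theorem \ref{t.beta}/\ref{t.typically_close_p} to get an exponential rate), a union bound controls all intervals simultaneously. Third, on the complement of these bad events, whenever $\X^\kappa$ exits $\Omega_N$ at a time $\tau^\kappa$ at which the labels $\caja{\X^\kappa}(\tau^\kappa)$ are still in $\Omega_N$, one of the coordinates must be outside its box-neighbourhood, i.e. in transition; but then within a bounded (order $1$) number of steps — certainly within $e^{\alpha\kappa}$ units of time, with high probability, again by Theorem \ref{t.beta} applied to the relevant double-well — the label collision occurs, so $|\caja{\tau^\kappa}-\tau^\kappa|<e^{\alpha\kappa}$. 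The symmetric direction is handled identically.

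The main obstacle is the bookkeeping in the second step: Theorem \ref{t.typically_close_p} is stated as a limit ($\to 0$ as $\kappa\to\infty$) for a \emph{fixed} sup over $[0,\tau^\kappa]$, and here I need it to survive a union bound over $\approx e^{(2-\alpha)\kappa}$ sub-intervals. Either one extracts an explicit exponential decay rate from its proof (the Freidlin–Wentzell large-deviation estimates underlying \cite{Eulalia_book} do give exponential rates, so this should be available), or — cleaner — one re-states the needed fact directly: the probability that, starting near $p$, the diffusion $\X^\kappa$ leaves $B(p,\delta)$ and fails to return (or to complete a transition to the next well) within $e^{\alpha\kappa}$ units of time decays like $e^{-ce^{(\text{small})\kappa}}$ or at least fast enough to beat the $e^{(2-\alpha)\kappa}$ count. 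A secondary technical point is making precise the claim "a coordinate in transition produces a label collision quickly": this requires that when two adjacent coordinates are simultaneously in transition in the same box, they actually cross (rather than one retreating), which follows from Theorem \ref{t.beta} (transitions succeed with probability $\to 1$) combined with the fact that two independent diffusions in a common well, run for even a short macroscopic time, intersect with high probability — essentially the one-dimensional coupling estimate \eqref{e.coupling_uniform_starting_same_time} already established in the proof of Lemma \ref{l.coupling_different_initial_conditions}. Assembling these pieces, the union bound closes and the lemma follows.
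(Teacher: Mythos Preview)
Your union-bound worry is a red herring: Theorem \ref{t.typically_close_p} already contains a $\sup_s$ over all length-$e^{\alpha\kappa}$ windows up to the exit time, so no union bound over $e^{(2-\alpha)\kappa}$ intervals is needed, and no quantitative rate has to be extracted. With that observation, your treatment of the direction $\tau^\kappa>\caja{\tau^\kappa}+e^{\alpha\kappa}$ (labels collide first) is essentially the paper's: once both coordinates share a box, one shows they are both near the common integer within a short time and then invokes \eqref{e.coupling_uniform_starting_same_time}.

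The real gap is in the other direction, $\caja{\tau^\kappa}>\tau^\kappa+e^{\alpha\kappa}$. At time $\tau^\kappa$ the two coordinates meet at some point $y$, and you want their \emph{labels} to coincide within $e^{\alpha\kappa}$. But your justification (``transitions succeed with probability $\to 1$'' plus the coupling estimate) only works if $y$ is on a definite side of the saddle: if $y$ is near a half-integer, the two now-independent copies started from $y$ can, each with non-vanishing probability, first hit \emph{different} neighbouring integers, and then $\caja{\X^\kappa}$ never collides at all on this scale. Nothing in the fraction-of-time estimate of Theorem \ref{t.typically_close_p} rules out that the specific random time $\tau^\kappa$ lands in this ambiguous region. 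The paper closes exactly this hole with a separate idea: it defines a ``bad set'' $B_\varepsilon^\kappa$ of points from which either neighbouring integer is reached first with probability $\geq\varepsilon$, and shows by contradiction that $\pp(\X^\kappa_1(\tau^\kappa)\in B_\varepsilon^\kappa)\to 0$ --- if not, then with positive probability both $\caja{\X^\kappa_1}$ and $\caja{\X^\kappa_2}$ would jump in the same interval of length $e^{\alpha\kappa}$, contradicting the Skorohod convergence to independent Poisson processes (Proposition \ref{p.FW}). Your sketch is missing this step, and without it the argument for $\caja{\tau^\kappa}>\tau^\kappa+e^{\alpha\kappa}$ does not close.
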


\begin{proof}
Without loss of generality, we may prove this result by restricting to $N=2$ and $\alpha<1$. 
We first study the event $\{\tau^\kappa>\caja{\tau^\kappa}+ e^{\alpha\kappa}\}$.
Its occurrence implies that $[\X_1^\kappa](\caja{\tau^\kappa})=[\X_2^\kappa](\caja{\tau^\kappa})$, but $\X^\kappa$ does not intersect in the time interval $\big[\tsm\caja{\tau^\kappa}, \caja{\tau^\kappa} +e^{\alpha \kappa}\big]$.
We now proceed as in the proof of Lemma \ref{l.coupling_different_initial_conditions}, defining $\sigma$ to be the first time after $\tau^\kappa$ when $\X^\kappa_1$ and $\X^\kappa_2$ are in the same integer translate of $[-\eta,\eta]$. Note that the probability that $\sigma$ is greater than $\frac12e^{\alpha \kappa}$ is at most
\begin{align*}
&\pp(\caja{\X_1^\kappa} \text{ or } \caja{\X_2^\kappa}\;\text{jump in}\;(\caja{\tau^\kappa}, \caja{\tau^\kappa} +e^{\alpha\kappa}]) + \pp(\sigma>\tfrac12e^{\alpha \kappa}\;\text{and no jumps in}\;(\caja{\tau^\kappa}, \caja{\tau^\kappa} +e^{\alpha\kappa}])).
\end{align*}
The first term here goes to $0$ thanks to Proposition \ref{p.FW} and the remark that follows it.
The second term goes to $0$ by the same reasoning as in Lemma \ref{l.coupling_different_initial_conditions}, namely we couple with Brownian motions in a double-well potential and use Theorem \ref{t.typically_close_p}.
Hence $\sigma\leq\frac12e^{\alpha\kappa}$ with high probability, and then we may use \eqref{e.coupling_uniform_starting_same_time} starting at time $\sigma$ to show that $\X_1^\kappa$ and $\X_2^\kappa$ are equal with high probability by time $e^{\alpha\kappa}$.

Next we study the event $\{\caja{\tau^\kappa}>\tau^\kappa + e^{\alpha\kappa}\}$, which now means that $\X_1^\kappa(\tau^\kappa) = \X_2^\kappa(\tau^\kappa)$, but $[\X^\kappa]$ does not intersect in $[\tau^\kappa, \tau^\kappa+e^{\alpha \kappa}]$.
We first need to show that, with high probability, $\X_1^\kappa(\tau^\kappa)$ determines the integer that $\caja{\X_1^\kappa}$ the first hits after time $\tau^\kappa$.
To this end, for $\varepsilon>0$ define the bad set
\begin{align*}
B_\varepsilon^\kappa=\big\{x\in \rr\!: \pp_x( \X^\kappa_1(\tau_\zz)=k \text{ and } \tau_\zz \leq e^{\alpha \kappa})>\varepsilon\;\text{for}\; k\in\{\lfloor x\rfloor,\lfloor x\rfloor+1\}\big\},
\end{align*}
where $\tau_\zz$ is the first time $\X_1^\kappa$ hits the integers; these are points near local maxima of the potential $v$ which have a fair chance to first hit either of the two integers to its side.
Note that we already know that $\pp_x(\tau_{\zz} >e^{\alpha \kappa})$ is small for large $\kappa$ thanks to Theorem \ref{t.typically_close_p}, so the condition $\tau_\zz \leq e^{\alpha \kappa}$ is redundant, but it makes the proof clearer.

We claim that, with high probability as $\kappa \to \infty$, $\X_1^\kappa(\tau^\kappa) \notin B_\varepsilon^\kappa$. 
Assume that this is not the case, i.e. that there is a sequence $\kappa_n\nearrow\infty$ so that $\bar\ep\coloneqq\lim_{n\to\infty}\pp(\X_1^{\kappa_n}(\tau^{\kappa_n}) \in B_\varepsilon^{\kappa_n})>0$.
Writing $\kappa$ in place of $\kappa_n$ for simplicity, we restart $\X_1^\kappa$ and $\X_2^\kappa$ at time $\tau^\kappa$, when they both take the value $x\coloneqq\X_1^\kappa(\tau^\kappa)$.
Observe that for large $\kappa$ the two processes touch a unique integer between times $\tau^\kappa$ and $\tau^\kappa + e^{\alpha \kappa}$ with probability at least $1-\varepsilon$ by Theorem \ref{t.typically_close_p}. 
And if $x\in B^\kappa_\ep$, then $\caja{\X}_1^\kappa(\tau^\kappa+e^{\alpha \kappa})$ and $\caja{\X}_2^\kappa(\tau^\kappa+e^{\alpha \kappa})$ can each be either $\lfloor x\rfloor$ or $\lfloor x\rfloor+1$, independently, each option with probability at least $\varepsilon$. 
But this contradicts the fact that, with high probability, $(\caja{\X^\kappa_1}, \caja{\X^\kappa_2})$ is close to $(\Y_1,\Y_2)$ for the Skorohod topology because, with probability at least $\bar\ep(1-\ep)\varepsilon^2$, both $\caja{\X^\kappa_1}$ and $\caja{\X^\kappa_2}$ have a jump in the interval $[\tau^\kappa, \tau^\kappa + e^{\alpha \kappa} ]$.

Hence, for fixed $\ep>0$ we have $\pp(\X_1^\kappa(\tau^\kappa) \in B_\varepsilon^\kappa)<\ep$ for large enough $\kappa$.
And on the event $\{\X_1^\kappa(\tau^\kappa) \notin B_\varepsilon^\kappa\}$, and for large enough $\kappa$, the first integer that each of $\caja{\X^\kappa_1}$ and $\caja{\X^\kappa_2}$ hits after time $\tau^\kappa$ is the same with probability at least $1-2\ep$.
Again by Theorem \ref{t.typically_close_p} the two processes hit a unique integer within the time interval $[\tau^\kappa,\tau^\kappa+e^{\alpha\kappa}]$ with probability at least $1-\ep$ for large $\kappa$.
So $\pp(\caja{\tau^\kappa}>\tau^\kappa + e^{\alpha\kappa})<4\varepsilon$, for large $\kappa$, and the result follows.
\end{proof}

Next we need to show that, at typical times, each $\X_i^\kappa$ is close to an integer.

\begin{lem}\label{l.typical_time_close_to_integers}
    In the context of Lemma \ref{l.both_hit}, we have for any $\alpha<2$ and $\beta <1$ that
    \begin{align*}
        \lim_{\kappa \to \infty}\sup_{t>e^{\alpha \kappa}} \pp_x\Big(\X^\kappa_i(t)\notin \zz+[-\beta,\beta ]\;\uptext{for some $i\in\{1,\dotsc,N\}$}\Big) =0.
    \end{align*}
\end{lem}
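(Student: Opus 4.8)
The plan is to reduce to a single diffusion and control it on the short time window $[t-e^{\gamma\kappa},t]$, using the metastability results of Section~\ref{sec:F-W}. Since the coordinates of $\X^\kappa$ are independent, $\pp_x\big(\X^\kappa_i(t)\notin\zz+[-\beta,\beta]\text{ for some }i\big)\leq\sum_i\pp_{x_i}\big(\X^\kappa_i(t)\notin\zz+[-\beta,\beta]\big)$, so it suffices to treat a single solution of \eqref{e.SDE}, which by periodicity of $v'$ we may take to start at some $x\in[-1/4,1/4]$; we may also assume $\beta<1/2$, since otherwise $\zz+[-\beta,\beta]=\rr$. Note that the complement of $\zz+[-\beta,\beta]$ is $\bigcup_{m}(m+\beta,m+1-\beta)$, and that $c_0\coloneqq\min\big(v(\beta),v(-\beta)\big)>0$, because $v$ has a strict minimum at $0$ in $[-\tfrac12,\tfrac12]$ with $v(0)=0$. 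Fix $\delta=\beta/2$ and $\gamma\in\big(0,\min(\alpha,2c_0)\big)$, so that $t-e^{\gamma\kappa}\geq0$ for every $t>e^{\alpha\kappa}$. For such a $t$, set $L_t=\sup\{s\leq t:\X^\kappa(s)\in\zz+[-\delta,\delta]\}$ (a maximum, since this set is closed; $L_t=-\infty$ if it is empty). On the event $E\coloneqq\{\X^\kappa(t)\notin\zz+[-\beta,\beta]\}$ one has $\X^\kappa(t)\notin\zz+[-\delta,\delta]$, hence $L_t<t$, and when $L_t>-\infty$ continuity forces $\X^\kappa(L_t)\in\zz+\{\pm\delta\}$ and $\X^\kappa(s)\in[m+\delta,m+1-\delta]$ for all $s\in(L_t,t]$ and a single integer $m$ (consecutive intervals $[m+\delta,m+1-\delta]$ are separated by translates of $[-\delta,\delta]$). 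We estimate $\pp(E)$ by splitting according to whether $t-L_t\leq e^{\gamma\kappa}$.

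Assume first $t-L_t\leq e^{\gamma\kappa}$, so $L_t\geq t-e^{\gamma\kappa}$ and in particular $\X^\kappa$ visits $\zz+[-\delta,\delta]$ during $[t-e^{\gamma\kappa},t]$; let $T_1$ be the first such visit, a stopping time with $T_1\leq L_t\leq t$, and let $m'$ be the integer nearest $\X^\kappa(T_1)$ (so $|\X^\kappa(T_1)-m'|\leq\delta$). On $E$, $\X^\kappa(t)$ is at distance $>\beta$ from every integer, so between $T_1$ and $t$ the process must reach distance $\beta$ from $m'$ (directly, if it stays in the box of $m'$, and otherwise upon leaving that box, since it then crosses a barrier top at distance $\tfrac12>\beta$ from $m'$); that is, $\X^\kappa$ exits $(m'-\beta,m'+\beta)$ within $e^{\gamma\kappa}$ units of time after $T_1$. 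By the strong Markov property at $T_1$ and periodicity,
\[\pp\big(E\cap\{t-L_t\leq e^{\gamma\kappa}\}\big)\ \leq\ A_\kappa\coloneqq\sup_{|y|\leq\delta}\pp_y\big(\X^\kappa\text{ exits }(-\beta,\beta)\text{ before time }e^{\gamma\kappa}\big).\]
The interval $(-\beta,\beta)$ contains the well-bottom $0$ and exiting it requires surmounting a barrier of height $c_0$; by a comparison with a double-well potential and Theorem~\ref{t.beta}, the exit time from $(-\beta,\beta)$ is of order $e^{2\kappa c_0}$ in distribution, uniformly in $|y|\leq\delta$, so since $\gamma<2c_0$ we get $A_\kappa\xrightarrow[\kappa\to\infty]{}0$.

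Assume now $t-L_t>e^{\gamma\kappa}$ (which also covers $L_t=-\infty$, in which case $x\in[-1/4,1/4]\setminus[-\delta,\delta]$ itself lies in one of the intervals $[m+\delta,m+1-\delta]$). Then $\X^\kappa$ stays in a single interval $[m+\delta,m+1-\delta]$ throughout $[t-e^{\gamma\kappa},t]$ (resp.\ throughout $[0,e^{\gamma\kappa}]$), so by the Markov property at time $t-e^{\gamma\kappa}$ (resp.\ at $0$) and periodicity,
\[\pp\big(E\cap\{t-L_t>e^{\gamma\kappa}\}\big)\ \leq\ B_\kappa\coloneqq\sup_{y\in[\delta,1-\delta]}\pp_y\big(\X^\kappa(s)\in[\delta,1-\delta]\ \ \forall\,s\in[0,e^{\gamma\kappa}]\big).\]
Each such interval contains the local maximum of $v$ at $\tfrac12$ and no local minimum, so the drift $-\kappa v'$ expels the process from it: from any starting point, $\X^\kappa$ leaves $[\delta,1-\delta]$ within a time of order $(\log\kappa)/\kappa$ with probability tending to $1$ as $\kappa\to\infty$. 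This is the classical Friedlin-Wentzell estimate for the time spent in a neighborhood of an unstable critical point (for starting points away from $\tfrac12$ the drift already forces a quick exit; for starting points near $\tfrac12$ one linearizes around the unstable equilibrium). Hence $B_\kappa\to0$.

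Combining the two cases, $\sup_{t>e^{\alpha\kappa}}\pp_x\big(\X^\kappa(t)\notin\zz+[-\beta,\beta]\big)\leq A_\kappa+B_\kappa\xrightarrow[\kappa\to\infty]{}0$, which proves the lemma. The main obstacle is the estimate $B_\kappa\to0$: it must hold uniformly over all starting points of the interval $[\delta,1-\delta]$, including the unstable equilibrium $\tfrac12$ itself, which is precisely the regime not directly covered by the metastability results of Section~\ref{sec:F-W} and which relies on the classical control of the dynamics near an unstable critical point.
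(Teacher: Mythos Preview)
Your argument is correct and takes a genuinely different route from the paper's. The paper argues by contradiction: it first uses Theorem~\ref{t.typically_close_p} (via a coupling with a double well) to show that the \emph{expected fraction of time} the process spends outside $\zz+[-\beta,\beta]$ on any window of length $e^{\alpha\kappa}$ is small, and then invokes the coupling of Lemma~\ref{l.coupling_different_initial_conditions} (different starting times) to spread a hypothetical bad pointwise bound $\pp_x(\X^\kappa(t)\notin\zz+[-\beta,\beta])>2\varepsilon$ to the whole window $(t,t+e^{\alpha\kappa})$, contradicting the occupation estimate. You instead give a direct pathwise decomposition via the last visit $L_t$ to $\zz+[-\delta,\delta]$, reducing everything to two exit-time bounds $A_\kappa$ and $B_\kappa$.

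What each approach buys: the paper's proof stays entirely within the tools it has already set up (Theorem~\ref{t.typically_close_p} and Lemma~\ref{l.coupling_different_initial_conditions}), at the price of an indirect argument. Your proof is more transparent and avoids Lemma~\ref{l.coupling_different_initial_conditions} altogether, but it imports two Freidlin--Wentzell facts that are not quite the ones stated in Section~\ref{sec:F-W}. For $B_\kappa$ you are upfront about this: the uniform exit from a neighborhood of the unstable point $1/2$ is classical but lies outside Theorems~\ref{t.beta}--\ref{t.typically_close_p}. For $A_\kappa$ the situation is similar: Theorem~\ref{t.beta} concerns exit over the saddle $z$ in a double-well potential satisfying hypothesis~(H), whereas you need a lower bound on the exit time from $(-\beta,\beta)$, whose endpoints are not critical points of $v$. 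This cannot be obtained from Theorem~\ref{t.beta} by a naive double-well comparison (one cannot place the local maximum $z$ at $\pm\beta$ while keeping $g=v$ there), so your appeal to that theorem is really an appeal to the general Freidlin--Wentzell exit-time lower bound for a domain containing a single attractor. That result is standard, and the paper itself makes the same kind of loose reference in the proof of Lemma~\ref{l.coupling_different_initial_conditions}, so this is a matter of citation rather than a gap.
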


\begin{proof}
    Without loss of generality we can assume that $N=1$.
    We begin by noting that, thanks to Theorem \ref{t.typically_close_p} and a simple coupling argument similar to the one in previous proofs, we have that for any given $\ep>0$ and any $M>0$ (which may depend on $\kappa$),    \begin{align}\label{e.average_time}
    \frac{1}{ e^{\alpha \kappa}}\ee\!\left[\int_{Me^{\alpha\kappa} }^{(M+1) e^{\alpha\kappa}}\mathbf 1_{\X_1^\kappa(s)\notin \zz+[-\beta,\beta ]} ds\right] \leq \varepsilon,
    \end{align}
    for large enough $\kappa>0$.

    Now, arguing by contradiction, consider an arbitrary $\ep>0$ and suppose that there is a deterministic $t>e^{\alpha \kappa}$ such that 
    \begin{equation}\label{e.detnotin0}
    \pp_x\big(\X^\kappa_1(t)\notin \zz+[-\beta,\beta ]\big)>2\varepsilon.
    \end{equation}
    Then by the coupling starting at different initial times given in Lemma \ref{l.coupling_different_initial_conditions}, we have that for all $s\in(t,t+e^{\kappa \alpha})$
    \begin{equation}\label{e.detnotin}
      \pp_x\big(\X^\kappa_1(s)\notin \zz+[-\beta,\beta ]\big)>\varepsilon.
    \end{equation}   
    This contradicts \eqref{e.average_time}, finishing the proof.
\end{proof}

We now have all the tools necessary to prove Theorem \ref{t.main}.

\begin{proof}[Proof of Theorem \ref{t.main}]
The proof is based again on a coupling argument.
Let $\X$ denote $N$ independent copies of the solution of \eqref{e.SDE} started at $x\in \Omega_N^\zz + [-1/4,1/4]$, with the given choice of $\kappa$.
By Proposition \ref{p.FW}, we can couple $\X(\lambda^\kappa \cdot)$ with $\Y(\cdot)$, a system of $N$ independent Poisson processes in $\zz$ jumping at rate $1$, started at $\caja{x}\in \Omega_N$, in such a way that the Skorohod distance between $\caja{\X}(\lambda^\kappa \cdot)$ and $\Y$ restricted to $[0,L]$ is small.

Recall the definition of the processes $\wcY ^{(L)}$ and $\wcX^{(T^\kappa)}$ in Sections \ref{s.direct_mRW} and \ref{s.glueing} (i.e. the systems of diffusions and random walks conditioned, respectively, on non-intersecting within $[0,T^\kappa]$ and $[0,L]$).
We will first show that $\cajab{\wcX^{(T^\kappa)}}(\lambda^\kappa\cdot)$ can be coupled with $\wcY^{(L)} (\cdot)$ in such a way that their Skorohod distance restricted to $[0,2L]$ is small.
This can be achieved by sampling the two processes together using coupling mentioned in the previous paragraph and then employing a plain acceptance-rejection   algorithm on both.
More precisely, we fix $\delta>0$, sample i.i.d. pairs $(\X_j(\lambda^\kappa \cdot),\Y_j)_{j\in \rr}$, each coupled as above, and define
\begin{align*}
    J^\kappa&= \inf\{j\in \nn: \X_j(t) \in \Omega_N \;\uptext{ for all }\; t\in [0,T^\kappa] \},\\
    J&= \inf \{j \in \nn: \Y_j(t) \in \Omega_N \;\uptext{ for all }\; t\in [0,L] \},\\
    E&= \inf\{j\in \nn: d((\caja{\X_j}(\lambda^\kappa t))_{t\in [0,2L]}, (\Y_j(t) )_{t\in [0,2L]})> \delta \},
\end{align*}
where $d$ denotes the Skorohod distance.
The acceptance-rejection method tells us that $\X_{J^\kappa}$ has the law of $\wcX^{(T^\kappa)}(\lambda_k\cdot)$ in $[0,T^\kappa/\lambda^\kappa]$ and that $\Y_{J}$ has the law of $\wcY ^{(L)} (\cdot)$ restricted to $[0,L]$.
Note that $J<\infty$ almost surely.

We claim now that 
\begin{equation}\label{e.JJE}
  \pp(J^\kappa=J<E)\xrightarrow[\kappa\to\infty]{}1.
\end{equation}
That $J<E$ with high probability is simple: $J$ is a geometric random variable with positive, fixed parameter, while $E$ is a geometric random variable with parameter converging to $0$ as $\kappa \nearrow \infty$ under the coupling, so indeed $\pp(J<E)\longrightarrow1$ as $\kappa\to \infty$. 

Next we check that it is unlikely that $J<J^\kappa$.
This event can occur in three ways: either  $\caja{\X_J}$ and $\Y_J$ are far, or they are close but $\Y_J$ has jumps occurring too close to time $L$, or the exit times from $\Omega_N$ of $\X_J$ and $\caja{\X_J}$ are very different.
That is,
\begin{align*}
\pp(J<J^\kappa)\leq \pp(J<E) + \pp(\Y_J \text{  jumps in }[L-\delta,L+\delta]) + \pp\!\left(|\caja{\tau^\kappa_J}-\tau^\kappa_J|\geq e^{\alpha\kappa}\right).
\end{align*}
As $\kappa\to\infty$ the right hand side goes to $\pp(\Y_J \text{  jumps in }[L-\delta,L+\delta])$ by the previous argument and Lemma \ref{l.both_hit}, which now goes to $0$ as $\delta\to0$ (where we use that $J<\infty$ almost surely to handle the dependence on $J$).

To finish proving \eqref{e.JJE} we need to check that it is also unlikely that $J>J^\kappa$.
The argument is the same as in the previous case, with the only difference that in the upper bound as $\kappa\to\infty$ we now get $\pp(\Y_{J^\kappa} \text{  jumps in }[L-\delta,L+\delta])$.
The only difficulty is that $J^\kappa$ now depends on $\kappa$, but we just proved that $J^\kappa\leq J$ with high probability, so we can argue as above.

Now, using this coupling of $\wcY^{(L)}$ and $\wcX^{(T^\kappa)}$, we run one iteration of step (2) of Algorithms \ref{a.A} and \ref{a.B} respectively; we will use superscripts $\Y$ and $\X$ to distinguish the sequences of times $\tau_n$ and $t_n$ employed in each algorithm.
It will be convenient for us to stop the algorithms if $\tau_1^\Y-L>L/2$, respectively $\tau_1^\X-T^\kappa>T^\kappa/2$ (meaning that we replace $t_1^\Y=\tau_1^\Y-L$ by $t_1^Y\wedge L/2$ and $t_1^X=\tau^\X_1-L$ by $t_1^\X\wedge T^\kappa/2)$.
This can be done because we have already shown that the mRW and the mBM are Markov processes.
Now we claim that for any $\alpha<2$, with probability going to $1$ as $\kappa\to \infty $ and then $\delta\to 0$,
\begin{enumerate}
    \item $|t_1^\X/\lambda^\kappa - t_1^\Y| \leq \delta $. 
    \item $\wcX^{(T^\kappa)}(t_1^\X)\in \Omega_N^\zz+ [-1/4,1/4]^N$.
    \item The Skorohod distance between $\cajab{\wcX^{(T^\kappa)}(\lambda^\kappa\cdot)}\big|_{[0,L/2]}$ and $ \wcY^{(L)}\big|_{[0,L/2]}$ is less than $\delta$.
    \item $\wcY^{(L)}$ does not jump in $[t_1^{\Y}-\delta,t_1^{\Y}+\delta]$. 
    Thus, and as long as (3) holds, $\cajab{\wcX^{(T^\kappa)}(\lambda^\kappa\cdot)} \big|_{[0,t_1^{\Y}]}$ and $\cajab{\wcX^{(T^\kappa)}(\lambda^\kappa\cdot)} \big|_{[0,t_1^{\X}/\lambda^\kappa]}$, are within Skorohod distance $\delta$, respectively, from $ \wcY^{(L)}\big|_{[0,t_1^{\Y}]} $ and $\wcY^{(L)}\big|_{[0,t_1^{\X}/\lambda^\kappa]}$.
\end{enumerate}
Conditions (1) and (2) ensure that we can restart the iteration of Algorithms \ref{a.A} and  \ref{a.B} using the same coupling, while conditions (3) and (4) ensure that the coupling which we have built up to time $t^\Y_1$ keeps the processes close in Skorohod distance.

That (1) occurs with high probability as $\kappa\to\infty$ follows directly from Lemma \ref{l.both_hit} and Proposition \ref{p.FW}, while (3) occurs with high probability thanks to the above construction (and \eqref{e.JJE} in particular).

Consider now condition (4).
Note first that, by the Markov property, conditionally on $\wcY^{(L)}(L)$ the law of $\wcY^{(L)}\big|_{[0, L/2]}$ is independent from that of $\wcY^{(L)}\big|_{[L,\infty)}$. 
We also have that for all $\varepsilon>0$ there is a finite set $K\subseteq\zz^N$ such that $\pp\big(\wcY^{(L)}(L)\in K\big)>1-\varepsilon$ and $\inf_{z\in K}\pp\big(\wcY^{(L)}(L)=z\big)>0$ for all $z\in K$.
Since $t_1^{\Y}$ is a deterministic function of $\wcY^{(L)}\big|_{[L,\infty)}$, we have that 
\begin{align*}
&\pp\!\left(\wcY^{(L)}\text{ jumps in }[t_1^\Y-\delta, t_1^\Y+\delta]\right)
=\ee\!\left[  \pp\!\left (\wcY^{(L)} \text{ jumps in }[t_1^\Y-\delta, t_1^\Y+\delta]\,\middle|\, \wcY^{(L)}(L), \wcY^{(L)}\big|_{[L,\infty)} \right )\right ]\\
&\hspace{0.1\textwidth}\leq \ee\!\left[\sup_{t\in [0,L/2]} \pp\!\left( \wcY^{(L)} \text{ jumps in }[t-\delta, t+\delta] \,\middle|\, \wcY^{(L)}(L)\right ) \right ]\\
&\hspace{0.1\textwidth} \leq \varepsilon + \frac{|K| }{\displaystyle\inf_{z\in K}\pp(\wcY^{(L)}(L)=z) } \sup_{t\in [0,L/2]} \pp\!\left(\wcY^{(L)} \text{ jumps in }[t-\delta, t+\delta]\right)\xrightarrow[\delta \to 0]{} \varepsilon.
\end{align*}
As $\varepsilon$ is arbitrary, (4) is satisfied with high probability. 

Let us pause for a moment to discuss why we chose this approach to prove (4), as we will require a similar---though necessarily more intricate---strategy in the continuous case to prove (2).
The desired property clearly holds if $t_1^{\Y}$ is replaced by a deterministic time $t$, so a natural strategy for the proof would be to show that the first intersection time $\tau_1^\Y$  is almost independent of the initial path of the process.
However, pursuing this directly would require controlling the law of the Markovian bridge that arises when the terminal value $\wcY^{(L)}(L)$ is fixed, which entails additional technical work. 
To avoid this, we use the fact that, given a fixed initial condition, with high probability $\wcY^{(L)}(L)$ takes values in some (large) finite set, so this Markovian bridge is absolutely continuous with respect to the law of the process itself on $[0,L]$. 
One might expect this technique to break down in the case of $\wcX^{(T^\kappa)}$, as the process takes uncountably many values at time $T^\kappa$ (so absolute continuity between the bridge and the process cannot be established in the same way).
We will resolve this by showing that, in essence, all that matters is the collection of boxes in which  $\wcX^{(T^\kappa)}(T^\kappa)$ lives.
We turn to this next.

The main idea to prove (2) is to condition on $\cajab{\wcX^{(T^\kappa)}(T^\kappa)}$, the vector of closest integers to the entries of $\wcX^{(T^\kappa)}(T^\kappa)$ (not to be confused with $\cajab{\wcX^{(T^\kappa)}}(T^\kappa)$).
The price we pay is that we no longer have the conditional independence which we used in the previous case, but we can get around that by coupling $\wcX^{(T^\kappa)}$ starting at time $T^\kappa$ with a copy of it which starts at $\cajab{\wcX^{(T^\kappa)}(T^\kappa)}$, for which the desired conditional independence will hold, and then showing that the coupling can be chosen so that the two copies of the process are close.

To implement this strategy, start by fixing $\ep>0$ and noting that, with probability larger than $1-\ep$, $\cajab{\wcX^{(T^\kappa)}}(T^\kappa)=\cajab{\wcX^{(T^\kappa)}(T^\kappa)}$ for large enough $\kappa$.
Now we define the coupling.
We first sample $\wcX^{(T^\kappa)}$ restricted to $[0,T^\kappa]$.
Then, to sample its continuation after time $T^\kappa$, we use the coupling introduced in Remark \ref{r.special_coupling} (in each coordinate separately) to construct a continuation from every possible value of $\wcX^{(T^\kappa)}(T^\kappa)$ and call $\mathsf{W}$ the solution started from $\cajab{\wcX^{(T^\kappa)}(T^\kappa)}$. 
Define $t^\mathsf{W}_1$ as the first time that $\mathsf{W}$ exits the Weyl chamber, and note that thanks to Lemma \ref{l.typical_time_close_to_integers} and the construction in Remark \ref{r.special_coupling}, $\pp(t^\mathsf{W}_1= t^\X_1)>1-\ep$.
The construction ensures that $t^\mathsf{W}_1$ and $\wcX|_{[0,T^{\kappa}/2]}$ are indeed conditionally independent given $\cajab{\wcX^{(T^\kappa)}(T^\kappa)}$, so we have
\begin{align*}
&\pp\left( \wcX^{(T^\kappa)}(t_1^\X)\notin \Omega_N^\zz+ [-\tfrac14,\tfrac14]^N\right )\\
&\hspace{0.06\textwidth}\leq 2\eps + \pp\!\left(\wcX^{(T^\kappa)}(t^\mathsf{W}_1)\notin \Omega_N^\zz+ [-\tfrac{1 }{4 },\tfrac{1 }{4 }]^N ,\,t^\mathsf{W}_1=t_1^{\X},\,\cajab{\wcX^{(T^\kappa)}(T^\kappa)}= \cajab{\wcX^{(T^\kappa)}}(T^\kappa) \right)  \\
&\hspace{0.06\textwidth}= 2\eps + \ee\!\left[\pp\!\left(\wcX^{(T^\kappa)}(t^\mathsf{W}_1)\notin \Omega_N^\zz+ [-\tfrac{1 }{4 },\tfrac{1 }{4 }]^N \,\middle|\,\cajab{\wcX^{(T^\kappa)}(T^\kappa)} \right)  \right ] \\
&\hspace{0.06\textwidth}\leq 3\ep + \frac{|K|}{\inf_{z\in K} \pp\big(\cajab{\wcX^{(T^\kappa)}(T^\kappa)} =z \big) } \sup_{t\in [0,T^\kappa/2]}\pp\!\left(\wcX^{(T^\kappa)}(t)\notin \Omega_N^\zz+ [-\tfrac{1 }{4 },\tfrac{1 }{4 }]^N\right ) \xrightarrow[\kappa \to \infty]{} 3\ep,
\end{align*}
where we have chosen as before a finite set $K\subseteq \zz^N$ so that $\pp\left(\wcX^{(T^\kappa)}(T^\kappa)\in K\right )>1-\ep$ and $\inf_{z\in K} \pp\big(\cajab{\wcX^{(T^\kappa)}(T^\kappa)} =z \big)>0$. 

We conclude by iterating the algorithm until we hit a given time $M>0$.
The fact that conditions (1)-(4) are met implies that on an event of high probability we can continue iterating Algorithms \ref{a.A} and \ref{a.B} while keeping the created processes within Skorohod distance $\delta$.
When we run a new iteration of the algorithm, say passing from step 1 to step 2, there is a minor issue which we need to be careful with: $\wcX^{(T^\kappa)}$ and $\wcY^{(L)}$ have been constructed up to two different times ($t^\X_1/\lambda^\kappa$ and $t^\Y_1$) and, at those times, they are at different points.
In order to solve this we couple $\wcY^{(L)}$ starting at time $t^\Y_1$ with a copy of $\wcX^{(T^\kappa)}$ which also starts at time $t^\Y_1$ and at $\wcY^{(L)}(t^\Y_1)$, for which the construction works.
Lemma \ref{l.coupling_different_initial_conditions} and the fact that neither process jumps between times $t_1^\X$ and $t_1^\Y$ with high probability ensure that this copy of $\wcX^{(T^\kappa)}$ is at Skorohod distance at most $\delta$ from the one we are interested in (starting at time $t^\X_1$ and at the correct location) with high probability.
This modification allows us to keep the coupling going, with the constructed processes staying within distance $\delta$ with high probability.
At each iteration the construction advances a time which is equal to independent copies of the random variable $t_1^\Y$.
Since $t_1^\Y$ is stochastically lower bounded by (the minimum between $L/2$ and) an exponential random variable with parameter $N$, the construction finishes in a finite number of steps, and the result follows.
\end{proof}

\vs

\noindent{\bf Acknowledgements.}
The authors thank Santiago Saglietti for discussions and references about metastability for Brownian motion on a double well potential. 
They also thank Antonio Rojo and Hernán Torres for producing videos with simulations of myopic random walks \cite{simulation}. 
The three authors were supported by Centro de Modelamiento Matem\'{a}tico Basal Funds FB210005 from ANID-Chile.
DR was also supported by Fondecyt Grant 1241974, and AS was also supported by Fondecyt Grants 1240884 and 11200085, and by ERC 101043450 Vortex.
AS would also like to thank the Hausdorff Institute of Mathematics, and in particular the trimester program ``Probabilistic methods in quantum field theory'', where he was based while the final part of this work was written.

\printbibliography[heading=apa]

\end{document}